\newcommand{\bB}{{\mathbb{B}}}
\newcommand{\bC}{{\mathbb{C}}}
\newcommand{\bD}{{\mathbb{D}}}
\newcommand{\bM}{{\mathbb{M}}}
\newcommand{\bR}{{\mathbb{R}}}
\newcommand{\bT}{{\mathbb{T}}}
  \newcommand{\A}{{\mathcal{A}}}
  \newcommand{\B}{{\mathcal{B}}}
  \newcommand{\D}{{\mathcal{D}}}
\renewcommand{\L}{{\mathcal{L}}}
\renewcommand{\P}{{\mathcal{P}}}
  \newcommand{\Q}{{\mathcal{Q}}}
\renewcommand{\S}{{\mathcal{S}}}
  \newcommand{\W}{{\mathcal{W}}}
  \newcommand{\Z}{{\mathcal{Z}}}
\newcommand{\fK}{{\mathfrak{K}}}
\newcommand{\fl}{{\mathfrak{l}}}
\newcommand{\fT}{{\mathfrak{T}}}
\newcommand{\rA}{\mathrm{A}}
\newcommand{\rC}{\mathrm{C}}
\newcommand{\eps}{\varepsilon}
\renewcommand{\phi}{\varphi}
\newcommand{\upchi}{{\raise.35ex\hbox{$\chi$}}}
\newcommand{\ol}{\overline}
\newcommand{\qand}{\quad\text{and}\quad}
\newcommand{\re}{\operatorname{Re}}
\newcommand{\im}{\operatorname{Im}}
\newcommand{\spn}{\operatorname{span}}
\newcommand{\spec}{\operatorname{spec}}
\newcommand{\nr}{\operatorname{nr}}
\newtheorem{lemma}{Lemma}[section]
\newtheorem{theorem}[lemma]{Theorem}
\newtheorem{proposition}[lemma]{Proposition}
\newtheorem{corollary}[lemma]{Corollary}
\newtheorem{theoremx}{Theorem}
\theoremstyle{definition}
\newtheorem{example}[lemma]{Example}
\begin{document}

\author{Rapha\"el Clou\^atre}
\address{Department of Mathematics, University of Manitoba, Winnipeg, Manitoba, Canada R3T 2N2}
\email{raphael.clouatre@umanitoba.ca\vspace{-2ex}}
\thanks{The author was partially supported by an NSERC Discovery Grant.}
\subjclass[2010]{Primary  	46L30, 46L07, 46L52.}
\begin{abstract}
Through the lens of noncommutative function theory, we study restrictions of pure states to  unital subspaces of $\rC^*$-algebras, in the spirit of the Kadison--Singer question. More precisely, given a unital subspace $M$ of a $\rC^*$-algebra $B$, the fundamental problem is to describe those pure states $\omega$ on $B$ for which  $E_\omega=\{\omega\}$, where $E_\omega$ is the set of states on $B$ extending $\omega|_M$. In other words, we aim to understand when $\omega|_M$ admits a unique extension to a state on $B$.  We find that the obvious necessary condition that $\omega|_M$ also be pure is sufficient in some naturally occurring examples, but not in general. Guided by classical results for spaces of continuous functions, we then turn to noncommutative peaking phenomena, and to the several variations on noncommutative peak points that have previously appeared in the literature. We  perform a thorough analysis of these various notions, illustrating that all of them are in fact distinct, addressing their existence and, in some cases, their relative abundance. Notably, we find that none of the pre-existing notions provide a solution to our main problem. We are thus naturally led to introduce a new type of peaking behaviour for $\omega$, namely that the set $E_\omega$ be what we call an $M$-\emph{pinnacle set}. Roughly speaking, our main result is that $\omega|_M$ admits a unique extension to $B$ if and only if $E_\omega$ is an $M$-pinnacle set. 
\end{abstract}

\title{Restrictions of Pure states to subspaces of $\rC^*$-algebras}

\maketitle

\section{Introduction}
The typical setup for this paper is that of a unital $\rC^*$-algebra $B$ and a unital  subspace $M\subset B$. A unital contractive linear functional $\phi:M\to\bC$ is called a \emph{state}. We say that $\phi$ is a \emph{pure} state when it is an extreme point of the convex set $\S(M)$ of all states on $M$. In this work we are concerned with the following fundamental problem.

\vspace{3mm}

\textbf{Main question.} Let $\omega$ be a pure state on $B$ and let $E_\omega$ denote the set of states on $B$ extending $\omega|_M$. When does $E_\omega$ reduce to the singleton $\{\omega\}$?
\vspace{3mm}

This can be viewed as a generalized version of a question that has garnered sustained interest in the case where $M$ is also a $\rC^*$-algebra \cite{anderson1979},\cite{archbold1980},\cite{ABG1982},\cite{AAP1986},\cite{archbold1999},\cite{archbold2001}. Notably, when $M$ is a discrete maximal abelian self-adjoint subalgebra of the $\rC^*$-algebra $B(H)$ of bounded linear operators on a Hilbert space $H$, the issue of uniqueness of extensions for states gave rise to the celebrated Kadison--Singer problem \cite{KS1959}, which was eventually resolved positively decades later \cite{MSS2015}. 

The starting point of this discussion is the simple observation that, in order for $E_\omega$ to be a singleton, it is necessary that $\omega|_M$ be a pure state on $M$. In this case, a standard convexity argument reveals that $E_\omega$ is a singleton precisely when $\omega$ is the only \emph{pure} state in $E_\omega$. Therefore, it is natural to define $M$ to have the \emph{pure extension property} in $B$ if, for every pure state $\omega$ on $B$, we have
\[
E_\omega=\{\omega\} \quad \text{if and only if } \omega|_M \text{ is pure}.
\]
When the $\rC^*$-algebra $B$ is commutative and generated by $M$ (i.e. $B=\rC^*(M))$, then because all pure states on $B$ are multiplicative, it readily follows that $M$ necessarily has the pure extension property, thereby answering our main question.

In Section \ref{S:pureuep}, we explore the pure extension property for unital subspaces of general $\rC^*$-algebras. Despite it being a natural property, it does not seem to have been investigated in detail before (although it is alluded to in \cite{BN2012}). In Proposition \ref{P:freespec}, we exhibit examples of subspaces with the pure extension property coming from the theory of matrix convexity and free spectrahedra. 
We also illustrate in Example \ref{E:Popescu}  how natural subspaces of multipliers on both the full and the symmetric Fock space have the pure extension property. Further, we  show in Theorem \ref{T:M2} that any unital subspace of  $\bM_2$ has this property. In general however, the pure extension property is known to fail. Indeed, there is a unital subspace of the $4\times 4$ complex matrices $\bM_4$ that fails to have this property \cite{sherman2023}.

In light of this, we must look elsewhere to answer our main question in general. A  contractive net $(e_\lambda)$ in $B$ is an \emph{excision} for the pure state $\omega$ on $B$ if $\lim_\lambda \omega(e_\lambda)=1$ and
\[
\lim_\lambda \|e^*_\lambda (b-\omega(b)I)e_\lambda\|=0, \quad b\in B.
\]
Excisions were first introduced and utilized by Anderson in \cite{anderson1979} (albeit under a different name), where our main question was fully answered in the special case where $M$ is a unital $\rC^*$-subalgebra of $B$: $E_\omega=\{\omega\}$ if and only if $\omega$ admits an excision in $M$. 
Subsequently, the idea of excision was fleshed out further in \cite{AAP1986}. Notable subsequent applications include the study of quasidiagonal $\rC^*$-algebras \cite{brown2005}, as well as some considerations surrounding Arveson's hyperrigidity conjecture \cite{clouatre2018lochyp} (once again,  different terminology was used in this last paper). Related concepts also appear in \cite{BW2017}. 

In Section \ref{S:excision}, we examine the existence of excisions in $M$ for general unital subspaces, and relate it to Hay's noncommutative peaking phenomenon for projections \cite{hay2007}. There is a unique  projection $\fl_\omega\in B^{**}$ such that 
 \[
 \{\xi\in B^{**}:\omega(\xi^*\xi)=0\}=B^{**}(I-\fl_\omega).
 \]
Building on deep work of Blecher, Hay, Neal and Read, we show in Corollary \ref{C:excisionequiv} that an excision for $\omega$ exists in a (possibly non-selfadjoint) unital, separable, norm-closed subalgebra $A\subset B$ if and only if $\fl_\omega$ is a so-called \emph{$A$-peak projection}. This latter peak property is characterized in Proposition \ref{P:peakprojnonorthog}, which in particular makes it clear that it coincides with the variant considered in \cite{clouatre2018lochyp}. Another easy consequence is  that $E_\omega=\{\omega\}$ whenever $\fl_\omega$ is a peak projection. As we illustrate in Example \ref{E:ueppeakproj}, the converse is typically false, so excisions do not yield a full answer to our question in the general case.

In spite of this, by analogy with the commutative case, one expects that alternative peaking phenomena may still be relevant. Let us recount some of the classical details that guide us throughout the paper.

Let $X$ be a compact Hausdorff space and let $A\subset \rC(X)$ be a unital  norm-closed  subalgebra  that separates the points of $X$  (equivalently, $\rC^*(A)=\rC(X)$).  Then, the pure state
 $\omega$ is given by evaluation at some point $x\in X$, and the following statements are equivalent. 
\begin{enumerate}[{\rm (a)}]
\item $E_\omega=\{\omega\}$
\item $\omega|_A$ is pure.
\item $x$ is an \emph{$A$-peak point}: there is a function $a\in A$ with $\|a\|=1$ such that $\{x\}=\{y\in X: |a(y)|=1\}$.
\end{enumerate}
The equivalence of (a) and (b) was mentioned above, while the implication (c)$\Rightarrow$(a) is straightforward.  The crucial implication is (a)$\Rightarrow$(c), which relies on the following.

\vspace{3mm}
\textbf{Bishop's criterion.} 
 There is $0<\eps<1$ for which, given any compact subset $K\subset X$ not containing $x$, we can find $a\in A$ such that $a(x)=1, \|a\|\leq 1+\eps$ and $|a(y)|<\eps$ for every $y\in K$.
\vspace{3mm}

 A clever trick of Bishop can be used to show that, for every unital norm-closed subspace $M\subset\rC(X)$, whenever the singleton $\{x\}$ is a $G_\delta$ subset of $X$, then $x$ is an $M$-peak point if it satisfies this criterion \cite[Theorem 6.5]{BdL1959} (see also \cite[Theorem II.11.1]{gamelin1969}). The property of $\{x\}$ being a $G_\delta$ can be removed at the cost of obtaining the weaker conclusion that $x$ be a so-called weak peak point \cite[Lemma II.12.1]{gamelin1969}. A generalized criterion for peak sets was formulated in \cite{jarosz1984}, while noncommutative versions of Bishop's criterion have been utilized in \cite{BR2011}. 

Going back to the general setting where $B$ may not be commutative and $M\subset B$ is a unital subspace, the aforementioned equivalence of (a) and (c) suggests that an answer to our question might yet  be found in some form of noncommutative peaking. On the other hand, we saw above that the property that $\fl_\omega$ be an $M$-peak projection is typically too strong to fully characterize the unique extension property $E_\omega=\{\omega\}$. 

We are thus led to consider relaxations of this notion, a task that we undertake in Section \ref{S:ncpeak}. Therein, we examine what we call \emph{$M$-peak sets/states} and \emph{$\Q(M)$-peak sets/states};  throughout, we put $\Q(M)=\{x^*x:x\in M\}$. Similar concepts have arisen in recent years in investigations related to Arveson's hyperrigidity conjecture \cite{arveson2011},\cite{clouatre2018unp},\cite{clouatre2018lochyp} and other structural operator algebraic questions \cite{CTh2022},\cite{CTh2023}. All three variants of noncommutative peak points collapse to the usual notion of peak point when $B$ is commutative.

We give a characterization in Proposition \ref{P:peakset} of those states $\omega$ such that $E_\omega$ contains a non-empty $M$-peak or $\Q(M)$-peak set. We emphasize here that such noncommutative peak sets are much more restrictive than their usual classical counterparts, being contained entirely in $E_\omega$. Recall indeed that, in the commutative picture, $E_{\omega}$ is a singleton in this case, so this notion recovers that of a peak point, as opposed to the weaker notion of a peak set. In  Example \ref{E:Dirichlet}, we study in detail a non-trivial concrete example of an operator algebra acting on the classical Dirichlet space of holomorphic functions, and analyze the pure states on this algebra from the point of view of peaking.

In Section \ref{S:abundance}, we further substantiate that the states $\omega$ for which $E_\omega$ contains a non-empty $M$-peak or  $\Q(M)$-peak set provide a meaningful relaxation of $M$-peak projections. This is accomplished upon exploring the relative abundance of these states. As noted already in \cite{hay2007}, peak projections may fail to exist in even the simplest noncommutative cases. By contrast, when $B$ is commutative, $M$-peak points are always dense in the set of pure states on $B$ that restrict to be pure on $M$. Hence, one may hope that our relaxed notions may also always be somewhat plentiful. This is indeed the case, as we establish in Theorem \ref{T:P0dense} and Corollary \ref{C:Qpeaknorm}, which we summarize below.

\begin{theoremx}\label{T:A}
Let $B$ be a unital  separable $\rC^*$-algebra. Then, the following statements hold.
\begin{enumerate}[{\rm (i)}]

\item  Let $M\subset B$ be a unital norm-closed subspace. Let $\P(M)$ denote the set of pure states $\omega$ on $B$ such that $E_\omega$ contains a non-empty $M$-peak set.  Then, the set $ \{\omega|_M:\omega\in \P(M)\} $ is weak-$*$ dense in the pure states on $M$.  

\item  Let $A\subset B$ be a unital norm-closed subalgebra. Let $\P(\Q(A))$ denote the set of pure states $\omega$ on $B$ such that $E_\omega$ contains a non-empty $\Q(A)$-peak set. Then, 
$
\sup_{\omega\in \P(\Q(A))}\omega(a^*a)=\|a\|^2
$
for every $a\in A$.
\end{enumerate} 
\end{theoremx}

While statement (i) above essentially follows by adapting Mazur's classical smooth point theorem (see Theorem \ref{T:Mazur}), statement (ii) requires us to establish a new variant of that fact (Theorem \ref{T:Qsmoothdense}), which may be of independent interest. At the time of this writing, we do not know if (ii) is valid for subspaces instead of subalgebras. In some small matrix algebras, we explain in Corollary \ref{C:M23dense} how the $\Q(A)$-peak sets in Theorem \ref{T:A} may be chosen to in fact be $\Q(A)$-peak states.

In Section \ref{S:peakuep}, we return to our main question and thoroughly examine the relationships between our relaxed notions of peak sets and the property that $E_\omega=\{\omega\}$. Our conclusions are summarized as follows, for a unital subspace $M\subset B$.
\[
\begin{tikzcd}[arrows=Rightarrow]
 &  \substack{\fl_\omega\text{  is}\\ M\text{-peak projection}} \arrow[dl, shift left=1.5] \arrow[dr,shift left=1.5] &\\
\substack{\omega \text{  is}\\ M\text{-peak state}}\arrow[dr, shift left=1.5] \arrow[rr,"/"{anchor=center,sloped}, shift right=1.5] \arrow[ur,"/"{anchor=center,sloped}, shift left=1.5]& & \substack{\omega \text{  is}\\ \Q(M)\text{-peak state}}\arrow[dl, "/"{anchor=center,sloped}, shift left=1.5]\arrow[ll,"/"{anchor=center,sloped}, shift right=1.5]\arrow[ul,"/"{anchor=center,sloped}, shift left=1.5]\\
 & \substack{E_{\omega}=\{\omega\}}\arrow[ul,"/"{anchor=center,sloped}, shift left=1.5]\arrow[ur,"/"{anchor=center,sloped}, shift left=1.5]&
\end{tikzcd}
\]
Notably, the negated implications fail even for norm-closed subalgebras: we construct a non-trivial counterexample in Example \ref{E:ueppeak2} based on a useful trick allowing one to pass from subspaces to subalgebras (Theorem \ref{T:uepcorner}). 

In Section \ref{S:pinndet}, we introduce the following key definition, inspired by Bishop's criterion above. Let $B$ be a unital $\rC^*$-algebra and let $M\subset B$ be a unital subspace. Let $\omega$ be a state on $B$ and let $\eps>0$. We say that $E_\omega$ is an \emph{$(M,\eps)$-pinnacle set}  if,  given any compact subset $K\subset \S(B)\setminus\{\omega\}$, we can find $x\in M$ with  $\|x\|\leq 1+\eps$ such that $\Omega(x^*x)>1>\psi(a^*a)$ for every $\Omega\in E_\omega$ and $\psi\in  K$. 

Let us give some intuition regarding this definition. Assume that the restriction $\omega|_M$ admits a unique state extension to $B$, namely $\omega$ itself. This means that, given another state $\psi$ on $B$ distinct from $\omega$, there must exist $x\in M$ such that $\omega(x)\neq \psi(x)$. Of course, such an element $x$ typically depends on $\psi$. When $E_\omega$ is an $(M,\eps)$-pinnacle set, given a compact subset $K$ not containing $\omega$, we may find a single element of the form $x^*x$, with $x\in M$ of norm at most $1+\eps$, that simultaneously separates $E_\omega$ from $K$.

The main result of the paper is Theorem \ref{T:ueppeak}, which answers our main question for subalgebras, and reads as follows.

\begin{theoremx}\label{T:B}
Let $A\subset B$ be a unital norm-closed subalgebra and let $\omega$ be a pure state on $B$. Then, the following statements are equivalent.
\begin{enumerate}[{\rm (i)}]
\item The restriction $\omega|_A$ admits a unique state extension to $B$, i.e. $E_\omega=\{\omega\}$.
\item $E_\omega$ is an $(A,\eps)$-pinnacle set for every $\eps>0$.
\item $E_\omega$ is an $(A,\eps)$-pinnacle set for some $\eps>0$.
%\item Let $K\subset \S(B)$ be a compact subset not containing $\omega$ and let $\eps>0$. Then,  there is $x\in A$ such that $\|x\|\leq 1+\eps$ and
%\[
%\Omega(x^*x) >1>\psi(x^*x) 
%\]
%if  $\psi\in K$ and $\Omega$ is a state on $B$ agreeing with $\omega$ on $A$.
\item For each compact subset $K\subset \S(B)$ not containing $\omega$,  there is $x\in A$ such that
\[
\Omega(x^*x) >1>\psi(x^*x) 
\]
for $\psi\in K$ and $\Omega\in E_\omega$.
\end{enumerate}
\end{theoremx}

Leveraging once again Theorem \ref{T:uepcorner}, we give in Corollary \ref{C:ueppeakspace} a version of this result that is valid for any unital norm-closed subspace $M\subset B$.

For the sake of comparison with Bishop's criterion, let us consider a closely related notion, which is perhaps more familiar looking. We say that $\omega$ is an \emph{$(M,\eps)$-pinnacle state} if,  given any compact subset $K\subset \S(B)\setminus\{\omega\}$, we can find $x\in M$ with $\|x\|\leq 1+\eps$ such that $\omega(x^*x)>1>\psi(a^*a)$ for every $\psi\in  K$. While reminiscent of Bishop's criterion, this condition is a priori weaker, as $\psi(a^*a)$ is not required to be much smaller than $1$. In addition, this condition is clearly  implied by $E_\omega$ being an $(M,\eps)$-pinnacle set, and also by $\omega$ being a $\Q(M)$-peak point. However, neither implication can be reversed in general, making pinnacle states a bit mysterious. In spite of this, these states can be shown to still enjoy some amount of rigidity (Theorem \ref{T:detbdry}), and do coincide with usual peak points when $B$ is commutative (Corollary \ref{C:pinncomm}).

Finally, we mention that there is a different natural framework for tackling the main question, where noncommutative peak points are taken to be certain special $*$-representations, as opposed to states or projections. Indeed, following Arveson's seminal work  \cite{arveson1969}, it has become a standard tool in modern operator algebras to view a distinguished class of  irreducible $*$-representations of $B$, the so-called boundary representations,  as the noncommutative analogue of the Choquet boundary. This approach has been very successful, especially given its intimate relation with the $\rC^*$-envelope of $M$ see \cite{dritschel2005},\cite{arveson2008},\cite{DK2015}. Just as we did for states, it makes sense in this case to isolate peaking behaviour. This idea was initiated in \cite{arveson2010}, and further pursued in  \cite{CTh2022},\cite{DP2022}.  Connections between peak states and peak representations were examined in \cite[Section 2]{CTh2023}.  

In Section \ref{S:rep}, we explain to which extent this alternative perspective is useful in understanding the unique extension property for states. Indeed, under an additional assumption, Theorem \ref{T:convex} shows how the condition $E_\omega=\{\omega\}$ yields a conclusion about the GNS representation of $\omega$ that is a much closer analogue of Bishop's criterion. Direct analogues of Bishop's criterion in the case where $\fl_\omega$ is a peak projection have appeared previously, see for instance \cite{BR2011} and the references therein.

\vspace{3mm}

\textbf{Acknowledgements.} We are grateful to Jashan Bal, David Blecher, Colin Krisko, David Sherman and Ian Thompson for  stimulating conversations and questions.

\vspace{3mm}

\textbf{Data availability statement.} There is no data associated with this work.

\section{The pure extension property}\label{S:pureuep}

Let $B$ be a unital $\rC^*$-algebra and let $M\subset B$ be a unital subspace. Recall that we say that $M$ has the \emph{pure extension property} in $B$ if every pure state on $M$ admits a unique extension to a state on $B$. A standard argument using  the Krein--Milman theorem reveals that this is the same as requiring that every pure state on $M$ admits a unique extension to a \emph{pure} state on $B$.

%Recall our main question: given a pure state $\omega$ on $B$, when does $\omega|_M$ admit a unique state extension to $B$? A simple necessary condition  is that $\omega|_M$ be pure. In this section, we investigate the sufficiency of this condition, that is, we wish to find instances of subspaces $M$ with the aforementioned pure extension property. 

As explained in the introduction of the paper, when the $\rC^*$-algebra $B$ is commutative and $M$ generates it, then $M$ automatically has this property, since pure states on $B$ are multiplicative. Of course, for more general $\rC^*$-algebras, pure states are not always multiplicative, and thus the pure extension property  imposes significant restrictions on the pair $M\subset B$. In particular, one might expect that the pure extension property is somewhat rare. It is certainly the case that the property is not automatic:  David Sherman has found  a $2$-dimensional operator system in $\bM_4$  for which this property fails \cite{sherman2023}. (Here and throughout, for a natural number $n\geq 1$ we denote by $\bM_n$ the space of $n\times n$ complex matrices.)  Another example appears in \cite[Example 2.3]{DH2023}. 
Nevertheless, we can exhibit some naturally occurring examples where the property does hold.

\subsection{Spectrahedra and the  pure extension property}

For our first class of examples, we need to recall some definitions.

Let $S$ be an operator system. For each $n\geq 1$, let $\W_n(S)$ denote the set of all unital completely positive maps $\psi:S\to\bM_n$. The sequence $\W(S)=(\W_n(S))$ is then called the \emph{matrix state space} of $S$. In the language of \cite{EW1997}, this is a matrix convex set in the dual space of $S$.

Next, fix integers $d\geq 1$ and $g\geq 1$ along with self-adjoint matrices $A_1,\ldots,A_g\in \bM_{d}$. Put $A=(A_1,\ldots,A_g)$.  For each natural number $n\geq 1$, we define the set $\D_A(n)\subset (\bM_n)^g$ to consist of those $g$-tuples $(X_1,\ldots,X_g)$ of self-adjoint matrices satisfying 
\[
\sum_{k=1}^g A_k\otimes X_k\leq I.
\]
The sequence $\D_A=(\D_A(n))$ is then called the \emph{free spectrahedron} corresponding to $A$. It is a matrix convex set in $\bC^g$. For more detail on free spectrahedra and their relation to matrix convexity, the reader may consult   \cite{kriel2019} and the references therein.

Given $X=[x_{ij}]\in \bM_n$, we let $X^\dagger=[\ol{x_{ij}}]\in \bM_n$. Accordingly, we put 
\[
\D_A(n)^\dagger=\{(X_1^\dagger,\ldots,X_g^\dagger): (X_1,\ldots,X_g)\in \D_A(n)\}.
\]

\begin{proposition}\label{P:freespec}
Let $B$ be a unital $\rC^*$-algebra and let $S\subset B$ be an operator system with $\rC^*(S)=B$. Assume that the matrix state space of $S$ is a free spectrahedron $\D_A$ satisfying $\D_A(2)^\dagger=\D_A(2)$. Then, any pure state on $S$ admits a unique extension to a state on $B$, and this extension is a $*$-representation. In particular, $S$ has the pure extension property in $B$.
\end{proposition}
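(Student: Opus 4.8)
The plan is to recognize that the hypothesis on the matrix state space is secretly a statement about the $\rC^*$-envelope of $S$, and then leverage the classical characterization of pure states of an operator system as compressions of boundary representations. First I would recall the correspondence between matrix convex sets and operator systems from \cite{EW1997}: the matrix state space $\W(S)$ determines $S$ up to complete order isomorphism, and its structure encodes the $\rC^*$-envelope. The condition $\D_A(2)^\dagger = \D_A(2)$ is a symmetry condition (stability under entrywise complex conjugation), and the key point I expect is that when the matrix state space is a free spectrahedron, the boundary representations of $S$ are severely restricted; in fact I would aim to show that the $\rC^*$-envelope $\rC^*_e(S)$ is a direct sum of matrix algebras of bounded dimension, with the $2\times 2$ symmetry condition forcing that every irreducible boundary representation is in fact $1$-dimensional. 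That is the crux: \emph{boundary representations of $S$ are characters}.

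Granting that, the rest is structural. A pure state $\omega$ on $S$ extends (Arveson / Hamana) to a state, indeed a boundary representation, of $\rC^*_e(S)$; by the previous step this boundary representation is a character $\chi$, i.e. a $*$-representation of $\rC^*_e(S)$ of dimension one. Composing with the quotient $B = \rC^*(S) \to \rC^*_e(S)$ (which is a complete order isomorphism on $S$ because $\rC^*(S) = B$ and the $\rC^*$-envelope is a quotient), we obtain a $*$-representation $\pi:B\to\bC$ restricting to $\omega$ on $S$. This is the claimed extension, and it is automatically a $*$-representation. For uniqueness: any state $\phi$ on $B$ extending $\omega$ restricts to a state on $\rC^*_e(S)$ extending the pure state $\omega$; but $\omega$, being (the restriction of) a boundary representation that is a character, has the unique extension property by Arveson's definition of boundary representation — so $\phi$ is forced to equal $\pi$. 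Finally, "every pure state has a unique extension" is precisely the pure extension property as defined just before the proposition.

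The main obstacle is the step identifying boundary representations of a free spectrahedron as characters. Here I would argue as follows: a free spectrahedron $\D_A$ is, by construction, the matrix state space of the operator system $S_A$ generated by $A_1,\dots,A_g$ inside $\bM_d$, so $S$ is completely order isomorphic to $S_A\subset\bM_d$ and $\rC^*_e(S)$ is a quotient of $\bM_d$, hence a direct sum $\bigoplus_i \bM_{d_i}$ of matrix algebras. The boundary representations are the (finitely many) projections onto these summands. Now the hypothesis $\D_A(2)^\dagger=\D_A(2)$ says the matrix convex set is invariant under the conjugation $X\mapsto X^\dagger$ at level $n=2$; translated through the duality, this should say that for every boundary representation $\pi_i:\bigoplus\bM_{d_j}\to\bM_{d_i}$, the "conjugate" representation $\bar\pi_i$ (acting by $\bar\pi_i(x) = \overline{\pi_i(x)}$, sensible on the self-adjoint part) is again, up to unitary equivalence and when restricted to $2\times 2$ matrix amplifications, in the matrix state space; combined with irreducibility and a dimension count this forces $d_i = 1$. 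I would make this precise by testing the condition on a pair of $2\times 2$ matrices built from a would-be $2$-dimensional irreducible representation and exhibiting an element of $\D_A(2)^\dagger\setminus\D_A(2)$ unless $d_i=1$ — essentially a rank/transpose obstruction analogous to the fact that the transpose map on $\bM_2$ is positive but not completely positive. If a cleaner route is available, it is to cite directly any known classification (e.g. from \cite{kriel2019}) of free spectrahedra that are $\dagger$-symmetric at level $2$ as exactly those whose generating tuple can be taken with real symmetric $A_k$, which makes the character claim transparent.
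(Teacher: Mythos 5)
There is a genuine gap, and it sits exactly at the step you yourself flag as the crux. Your identification of $S$ with the operator system $S_A=\spn\{I,A_1,\dots,A_g\}\subset \bM_d$ inverts the Webster--Winkler duality: the hypothesis is that the matrix \emph{state space} of $S$ equals $\D_A$, so $S$ is (completely order isomorphic to) the operator system of matrix affine functions \emph{on} $\D_A$, i.e.\ the universal operator system of a self-adjoint $g$-tuple $X$ subject to $\sum_k A_k\otimes X_k\le I$ --- not the system spanned by the $A_k$ themselves, whose matrix state space is the matrix range of $A$, the polar-dual object. Consequently $\rC^*_e(S)$ is in general not a quotient of $\bM_d$, it need not be finite dimensional, and its irreducible boundary representations need not be characters. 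A concrete counterexample to your central claim: let $\D_A$ be the free cube $\{X:-I\le X_k\le I\}$ with $g\ge 2$, realized with real diagonal $A_k$, so that $\D_A(n)^\dagger=\D_A(n)$ for every $n$. The associated operator system is the universal one generated by $g$ self-adjoint contractions; it admits irreducible boundary representations of arbitrarily large (indeed infinite) dimension (tuples of symmetries generating irreducible algebras are absolute extreme points at every level), and its $\rC^*$-envelope is infinite dimensional. So neither the ``$\rC^*_e(S)=\bigoplus_i\bM_{d_i}$'' step nor the ``all boundary representations are characters'' step can be repaired, and your suggested shortcut (``$\dagger$-symmetric at level $2$ iff the $A_k$ can be taken real symmetric'') would not help even if true, since the free cube already satisfies it.

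The statement does not require anything about boundary representations at higher levels; it is purely a level-one assertion, and that is how the paper proves it. A pure state on $S$ is an extreme point of $\W_1(S)=\D_A(1)$, and the theorem of Evert--Helton--Klep--McCullough (\cite[Theorem 6.1]{EHKM2018}, with the corrected version) says that, under the hypothesis $\D_A(2)^\dagger=\D_A(2)$, every Euclidean extreme point of $\D_A(1)$ is an \emph{Arveson extreme point} of the free spectrahedron. Arveson extremity at level one means precisely that the corresponding unital completely positive map is maximal in the dilation order, i.e.\ it has the unique extension property; hence the state extends uniquely to $B=\rC^*(S)$ and the extension is a (one-dimensional) $*$-representation. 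If you want to salvage your outline, replace the ``boundary representations are characters'' claim by this level-one maximality statement --- but note that proving it from scratch is essentially reproving the EHKM theorem, which is where the $2\times 2$ conjugation-symmetry hypothesis genuinely enters.
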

\begin{proof}
Let $\omega$ be a pure state on $S$. Equivalently, this means that $\omega$ is an extreme point of $\W_S(1)$.
Next,  we may apply \cite[Theorem 6.1]{EHKM2018} (see also the corrected version \cite{EHKM2018CORR}) to conclude that $\omega$ is a so-called Arveson extreme point of $\W(S)$, which implies the desired conclusion.
\end{proof}

Operator systems satisfying the conditions above can easily be constructed as follows. Start with a free spectrahedron $\D_A$ such that $\D_A(2)^\dagger=\D_A(2)$. Then, one can construct the operator system of continuous matrix affine functions on $\D_A$, which will have $\D_A$ as its matrix state space; see \cite[Section 3]{WW1999} for details.

\subsection{The joint numerical range}

For our next class of examples, we need to introduce some notation.
Given finitely many elements $b_1,\dots,b_n$ in a unital $\rC^*$-algebra $B$, we denote by $W(b_1,\ldots,b_n)\subset \bC^n$ the convex subset of those points of the form $(\psi(b_1),\ldots,\psi(b_n))$ for some state $\psi$ on $B$. 
It is easily verified that, given a state $\psi$ on $B$, the point $(\psi(b_1),\ldots,\psi(b_n))\in \bC^n$ is an extreme point of $W(b_1,\ldots,b_n)$ precisely when $\psi$ restricts to a pure state on the unital subspace of $B$ generated by $b_1,\ldots,b_n$.

We now show that certain ``maximal" extreme points of the joint numerical range give rise to pure states with unique extensions. The following standard argument is well known to experts, but we record it as we lack an exact reference.

\begin{proposition}\label{P:WArv}
Let $B$ be a unital $\rC^*$-algebra and let $b_1,\ldots,b_n\in B$ be elements satisfying $\sum_{k=1}^n b_k b_k^*\leq I$. Assume that the extreme points of $W(b_1,\ldots,b_n)$ lie on the unit sphere in $\bC^n$. 
Let $M=\spn\{I,b_1,\ldots,b_n\}$,  let $A\subset B$ denote the algebra generated by $M$ and let $D\subset B$ denote the operator system  generated by $\{ab^*:a,b\in A\}$. If $\omega$ is a pure state on $M$, then there is a unique state $\psi$ on $D$ extending $M$. In fact, the state $\psi$ satisfies
\[
\psi(b_{k}d)=\omega(b_{k})\psi(d) \qand 
\psi(b_k d b_j^*)=\omega(b_{k})\psi(d)\omega(b_j)^*
\]
for each  $1\leq j,k\leq n$ and $d\in D$.
\end{proposition}
\begin{proof}
Let $\psi$ be any state extension of $\omega$ to $D$. 
Since $\omega$ is pure, we see that $(\omega(b_1),\ldots,\omega(b_n))$ is an extreme point of $W(b_1,\ldots,b_n)$, and hence it lies on the unit sphere of $\bC^n$. By the Schwarz inequality we find
\begin{align*}
1&=\sum_{k=1}^n |\omega(b_k)|^2\leq \sum_{k=1}^n \psi(b_kb_k^*)=\psi\left(\sum_{k=1}^n b_k b_k^* \right)\leq 1
\end{align*}
so that 
\[
\psi(b_k b_k^*)=|\omega(b_k)|^2=|\psi(b_k)|^2, \quad 1\leq k\leq n.
\]
Applying \cite[Theorem 3.18]{paulsen2002},  given $1\leq j,k\leq n$ and $d\in D$ we find
\begin{align*}
\psi(b_{k}d)&=\psi(b_{k})\psi(d)=\omega(b_k)\psi(d)
\end{align*}
and
\[
\psi(b_k d b_j^*)=\psi(b_{k})\psi(d)\psi(b_j)^*=\omega(b_{k})\psi(d)\omega(b_j)^*
\]
In particular, $\psi$ is  uniquely determined by $\omega$. 
\end{proof}

The previous result should be compared with \cite[Theorem 3.1.2]{arveson1969},\cite[Corollary 3]{arveson1998},\cite[Theorem 3.4]{CH2018}. As an application, we show how certain spaces arising naturally in multivariate operator theory have the pure extension property.

\begin{example}\label{E:Popescu}
Let $d\geq 1$ be an integer and let $F^2_d$ denote the full Fock space on $\bC^d$. Let $L_1,\ldots,L_d\in B(F^2_d)$ denote the standard left creation operators, which satisfy $\sum_{k=1}^d L_k L_k^*\leq I$. The closed unital subalgebra $\A_d\subset B(F^2_d)$ generated by $L_1,\ldots,L_d$ is Popescu's non-commutative disc algebra \cite{popescu1989},\cite{popescu1996}. It is easily verified that the set $\{ab^*:a,b\in \A_d\}$ spans a dense subset of $\rC^*(\A_d)$.

The symmetric part of the full Fock space $F^2_d$ can be identified with the Drury--Arveson space $H^2_d$ of analytic functions on the open unit ball $\bB_d\subset \bC^d$ \cite{arveson1998} (see \cite{hartz2022} for a modern survey). If we denote by $\B_d\subset B(H^2_d)$ the norm-closure of the polynomial multipliers on this space, then there is a unital completely contractive homomorphism $\Gamma: \A_d\to \B_d$ sending each $L_k$ to the operator $Z_k$ of multiplication by the $k$-th coordinate function (see for instance \cite[Theorem 2.2]{CT2019cyclic} for a proof). In particular, we have
$
\sum_{k=1}^d Z_kZ_k^*\leq I .
$

By \cite[Theorem 5.7]{arveson1998}, we see that the set $\{ab^*:a,b\in \B_d\}$ spans a dense subset of $\rC^*(\B_d)$. Furthermore,  \cite[Theorem 5.7]{arveson1998} implies that the closed unit ball is contained in $W(Z_1,\ldots,Z_d)$, and
 hence in $W(L_1,\ldots,L_d)$ also. Conversely, the inequalities
 \[
 \sum_{k=1}^d L_kL_k^*\leq I \qand \sum_{k=1}^d Z_kZ_k^*\leq I 
 \]
imply that the joint numerical ranges are contained in $\ol{\bB_d}$. Therefore, 
\[
W(L_1,\ldots,L_d)=W(Z_1,\ldots,Z_d)=\ol{\bB_d}
\]
and the extreme points of this set are precisely those on the unit sphere. 

Finally, let $\L_d=\spn\{I,L_1,\ldots,L_d\}\subset \A_d$ and $\Z_d=\spn\{I,Z_1,\ldots,Z_d\}\subset \B_d$.  Applying Proposition \ref{P:WArv}, we conclude that $\L_d$ has the pure extension property in $\rC^*(\A_d)$, while $\Z_d$ has the pure extension property in $\rC^*(\B_d)$.
\qed
\end{example}

A natural question that we are unable to answer at present is whether the algebras $\A_d$ and $\B_d$ above  have the pure extension property inside their respective $\rC^*$-algebras.

\subsection{The case of $2\times 2$ matrices}

In this section, we investigate the pure extension property for subspaces of $\bM_2$. We give a somewhat streamlined version of the argument given in \cite{bal2022},\cite{krisko2022}.

We start with a reduction.

\begin{lemma}\label{L:2dim}
The following statements are equivalent.
\begin{enumerate}[{\rm (i)}]
\item Every $2$-dimensional unital subspace $M\subset \bM_2$ satisfying $\rC^*(M)=\bM_2$ has the pure extension extension property in $\rC^*(M)$.
\item Every unital subspace $M\subset \bM_2$  has the pure extension extension property in $\rC^*(M)$.
\end{enumerate}
\end{lemma}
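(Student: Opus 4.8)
The implication (ii)$\Rightarrow$(i) is immediate, so the plan is to establish (i)$\Rightarrow$(ii) by running through the few possibilities for a unital subspace $M\subset\bM_2$ and showing that all but one of them reduce either to a triviality or to the hypothesis. A unital $\rC^*$-subalgebra of $\bM_2$ is $*$-isomorphic to $\bC$, to $\bC\oplus\bC$, or to $\bM_2$. In the first two cases $\rC^*(M)$ is at most $2$-dimensional and $M$ is a unital subspace generating it, whence $M=\rC^*(M)$ and the pure extension property holds trivially (one could also appeal to the classical commutative discussion from the introduction). So I may assume $\rC^*(M)=\bM_2$, which forces $\dim M\in\{2,3,4\}$; the case $\dim M=4$ gives $M=\bM_2$ and is vacuous, while $\dim M=2$ is exactly the content of (i). Everything therefore reduces to the assertion that a $3$-dimensional unital subspace $M\subset\bM_2$ with $\rC^*(M)=\bM_2$ has the pure extension property in $\bM_2$.

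To treat this remaining case I would argue directly, using the Bloch-ball description of $\S(\bM_2)$. Fix a Pauli basis $\{I,\sigma_1,\sigma_2,\sigma_3\}$ of $\bM_2$ (self-adjoint $\sigma_k$ with $\tr\sigma_k=0$ and $\tr(\sigma_j\sigma_k)=2\delta_{jk}$), so that every state of $\bM_2$ is $\psi_r\colon x\mapsto\tr(\rho(r)x)$ with $\rho(r)=\tfrac12\big(I+\sum_k r_k\sigma_k\big)$ for a unique $r\in\bR^3$, $\|r\|\le1$, and $\psi_r$ is a pure state precisely when $\|r\|=1$. Writing $M=\bC I\oplus\spn_\bC\{b_1,b_2\}$ with $b_j=\beta_{j0}I+\sum_k v_{jk}\sigma_k$ and $v_1,v_2\in\bC^3$ linearly independent, and putting $V=\spn_\bC\{v_1,v_2\}$, one finds (expanding in the Pauli basis) that $M=\{\beta_0 I+\sum_k w_k\sigma_k: \beta_0\in\bC,\ w\in V\}$, so a state $\psi_r$ extends a given pure state $\omega$ on $M$ iff $\sum_k v_{jk}r_k=\omega(b_j)-\beta_{j0}$ for $j=1,2$ — a real-affine system in $r$ whose homogeneous part vanishes exactly on $(\re V+\im V)^\perp$, where $\re V+\im V\subset\bR^3$ is the real span of all real and imaginary parts of vectors of $V$. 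If $\overline{V}\ne V$ (equivalently, $M$ is not self-adjoint) then $\re V+\im V=\bR^3$, the system has a unique solution $r_0$, and the extension of $\omega$, which exists, must be $\psi_{r_0}$. If $\overline{V}=V$ then $M$ is an operator system and, after conjugating by a unitary (which rotates the Pauli frame by an element of $SO(3)$), I may take $M=\spn_\bC\{I,\sigma_1,\sigma_2\}$; the system then reads $r_1=\omega(\sigma_1)$, $r_2=\omega(\sigma_2)$, while the purity of $\omega$ says — via the remark preceding Theorem \ref{T:WArv}, applied to a state on $\bM_2$ extending $\omega$ — that $(\omega(\sigma_1),\omega(\sigma_2))$ is an extreme point of $W(\sigma_1,\sigma_2)$, which here is the closed unit disc, so $\omega(\sigma_1)^2+\omega(\sigma_2)^2=1$; combined with $\|r\|\le1$ this forces $r_3=0$, and again the extension is unique.

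The only genuine work is this $3$-dimensional case, and within it the sub-case where $M$ is an operator system: there the fibre of the moment map $\rho\mapsto(\tr(\rho\sigma_1),\tr(\rho\sigma_2))$ over an interior point of the disc is a nondegenerate segment, and it is precisely the purity of $\omega$ — forcing that point onto the boundary circle — that collapses the segment to a single point. (A more uniform alternative would be to reduce the $3$-dimensional case to hypothesis (i) by exhibiting, for each pure $\omega$ on $M$, a $2$-dimensional unital subspace $N=\spn\{I,b\}\subset M$ with $b$ non-normal, so that $\rC^*(N)=\bM_2$, and with $\omega(b)$ on the boundary ellipse of the numerical range $W(b)$, so that $\omega|_N$ is still pure; any state on $\bM_2$ extending $\omega$ then restricts to the unique extension of $\omega|_N$ furnished by (i). Producing such a $b$ is the crux of that route, and it is again exactly where one exploits that $\omega$ is an extreme point.)
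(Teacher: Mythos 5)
Your proposal is correct, and the reductions at the start (classifying $\rC^*(M)$ as $\bC$, $\bC\oplus\bC$ or $\bM_2$, disposing of $\dim M\in\{1,2,4\}$) match the paper's, but you handle the remaining $3$-dimensional case by a genuinely different argument. The paper's proof is soft: it sets $S=M+M^*$, notes that states on $M$ extend uniquely to $S$, so either $S=\bM_2$ (and the conclusion is immediate) or $M=S$ is self-adjoint, in which case $M=N+N^*$ for the $2$-dimensional space $N=\spn\{I,g+ih\}$; since the restriction map $\S(M)\to\S(N)$ is an affine bijection, purity and uniqueness of extensions transfer, and the case is funneled into hypothesis (i) (so all the hard work is deferred to the numerical-range computation in Theorem \ref{T:M2}). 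You instead argue directly with the Bloch-ball parametrization of $\S(\bM_2)$: for non-self-adjoint $M$ the real-affine system has trivial homogeneous kernel (because $\operatorname{Re}V+\operatorname{Im}V=\bR^3$ when $\overline V\neq V$), giving unique extensions for \emph{all} states, and for self-adjoint $M$ you reduce by an $SO(3)$ rotation to $\spn\{I,\sigma_1,\sigma_2\}$ and use purity (via the remark before Theorem \ref{T:WArv}) to push $(\omega(\sigma_1),\omega(\sigma_2))$ to the boundary circle, which kills the remaining degree of freedom $r_3$. This is computational rather than structural, and it proves the $3$-dimensional case unconditionally, never invoking (i) there; the paper's route is shorter and keeps the lemma purely a reduction device, whereas yours buys a self-contained and slightly stronger statement for $\dim M=3$. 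Your closing parenthetical alternative is more complicated than needed: in the paper's reduction one fixes $N$ once and for all, and purity of $\omega|_N$ is automatic from the affine bijection $\S(M)\cong\S(N)$, with no need to place $\omega(b)$ on the boundary of a numerical range. (Minor point in your favour: you treat $\dim M=2$ with commutative $\rC^*(M)$ explicitly, which the paper's proof passes over in silence.)
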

\begin{proof}
Assume that (i) holds and let $M\subset \bM_2$ be a unital subspace. We must show that $M$ has the pure extension property in $\rC^*(M)$.

If $\dim M=1$, then $M=\bC I=\rC^*(M)$ and the desired conclusion is trivial. Likewise, if $\dim M=4$ then $M=\rC^*(M)$, and once again the conclusion is trivial. It thus only remains to deal with the case where $\dim M=3$.

Let $S=M+M^*\subset \bM_2$. It is well known that any state on $M$ has a unique extension to a state on $S$. We may therefore assume without loss of generality that $S$ is a proper subspace of $\bM_2$. In particular, $\dim S<4$ and we must have $\dim M=\dim S=3$, so $M=S$ and $M$ is in fact self-adjoint. 

Thus, there are self-adjoint elements $g,h\in M$ such that $M=\spn\{I,g,h\}$. Put $a=g+ih\in M$. Then, $M=N+N^*$, where $N=\spn\{I,a\}$. We claim that $N$ has the pure extension property in $\rC^*(N)=\rC^*(M)$. To see this, observe  that $N$ has dimension $2$. If $\rC^*(N)\cong\bC\oplus\bC$, then the claim holds trivially. The alternative is when $\rC^*(N)=\bM_2$, in which case the claim follows from our assumption. 

Finally, observe that a pure state on $M$ restricts to a pure state on $N$, so that $M$ also has the pure extension property in $\rC^*(M)$.
\end{proof}

We can now proceed to show that the pure extension property is always satisfied in $\bM_2$.

\begin{theorem}\label{T:M2}
Let $M\subset \bM_2$ be a unital subspace. Then, $M$ has the pure extension property in $\rC^*(M)$.
\end{theorem}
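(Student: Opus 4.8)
By Lemma \ref{L:2dim}, the plan is to reduce immediately to the case where $M$ is a $2$-dimensional unital subspace of $\bM_2$ with $\rC^*(M)=\bM_2$. Write $M=\spn\{I,a\}$ for some $a\in\bM_2$. Since adding a scalar multiple of $I$ to $a$ and scaling do not affect either the hypothesis or the conclusion, and since conjugating by a unitary is harmless, I would first normalize $a$: using the fact that any $2\times 2$ matrix is unitarily equivalent to an upper triangular one, and then subtracting the trace and rescaling, I expect to arrive at a convenient normal form, e.g. $a=\begin{bmatrix} 0 & s \\ 0 & 1\end{bmatrix}$ with $s>0$ (the condition $\rC^*(M)=\bM_2$ forces $a$ not normal, hence $s>0$ after normalization; the case where $a$ is normal gives $\rC^*(M)$ abelian, already excluded). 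The point of the normal form is that the pure states on $M=\spn\{I,a\}$ are exactly the states $\psi$ with $(\psi(a))$ an extreme point of the numerical range $W(a)$, and $W(a)$ of a $2\times 2$ matrix is an ellipse (possibly degenerate to a segment) whose boundary I can describe explicitly.

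Next I would identify the pure states on $M$ concretely. A state $\psi$ on $\bM_2$ is $\psi(\cdot)=\tr(\rho\,\cdot)$ for a density matrix $\rho$, and its restriction to $M$ is pure precisely when $\tr(\rho a)$ is an extreme point of the ellipse $W(a)=\{\tr(\rho a):\rho\geq 0,\ \tr\rho=1\}$. The extreme points of an ellipse are all of its boundary points. So I must show: for each density matrix $\rho$ with $\tr(\rho a)\in\partial W(a)$, every density matrix $\sigma$ with $\tr(\sigma a)=\tr(\rho a)$ agrees with $\rho$ on all of $\bM_2$, i.e. $\sigma=\rho$. Equivalently, the affine map $\rho\mapsto\tr(\rho a)$ from the state space $\S(\bM_2)$ (the Bloch ball) onto the closed elliptical disk $W(a)$ is injective on the preimage of the boundary ellipse. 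This is the heart of the matter and where I expect the real work to be: one computes, from the normal form, the explicit formula for $\tr(\rho a)$ in terms of the Bloch coordinates of $\rho$, checks that the boundary ellipse is the image of the boundary sphere (the pure states of $\bM_2$), and verifies that the map restricted to that sphere is injective — equivalently, that the only way a boundary point of $W(a)$ is attained is by a single pure state $\rho$, with no mixed state attaining it either. The ellipse-vs-sphere computation for $2\times 2$ matrices is classical (the boundary of $W(a)$ is swept out injectively by the rank-one projections onto eigenvectors of $\re(e^{-i\theta}a)$), so the injectivity should fall out once the normal form is in place; a mixed state can never map to a boundary point of the range of an affine surjection onto a strictly convex set unless it lies in the fiber over an extreme point and that fiber is a single point, which the pure-state computation already pins down.

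Finally, having shown that each pure state $\omega$ on $M$ is the restriction of a unique state on $\bM_2$, I conclude that $M$ has the pure extension property in $\rC^*(M)=\bM_2$, and by Lemma \ref{L:2dim} this suffices. The main obstacle, as indicated, is the explicit verification that the numerical-range map is one-to-one over the boundary ellipse; I would organize that computation around the normal form and the standard description of the boundary of the numerical range of a $2\times 2$ matrix via the eigenvectors of its real parts $\re(e^{-i\theta}a)$, rather than attempting a coordinate-free argument. If the degenerate case (when $W(a)$ is a line segment, i.e. $a$ essentially self-adjoint after an affine change) needs separate treatment, there every boundary point of a segment is an endpoint and uniqueness of the extension is the classical fact about unital subspaces of the form $N+N^*$ already invoked in the proof of Lemma \ref{L:2dim}.
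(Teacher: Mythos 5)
Your proposal is correct in substance and follows the same overall strategy as the paper: reduce via Lemma \ref{L:2dim} to a $2$-dimensional $M=\spn\{I,a\}$ with $\rC^*(M)=\bM_2$, pass to a triangular normal form, identify pure states on $M$ with extreme points of the numerical range $W(a)$, and use the elliptical range theorem. Where you differ is in the execution of the crucial uniqueness step. The paper picks an exposing functional $z$ for the extreme point $\lambda=\omega(g)$ and then shows by an explicit two-variable calculus computation that $\re\bigl(z\langle g\xi,\xi\rangle\bigr)$ has a unique maximizer over unit vectors; you instead invoke the supporting-line description of the boundary: a unit vector attaining a boundary point of $W(a)$ must be a top eigenvector of $\re(e^{-i\theta}a)$ for the outward normal direction $\theta$, and for non-normal $a$ that eigenvalue is simple (a $2\times 2$ Hermitian matrix with repeated eigenvalue is scalar, and $\re(e^{-i\theta}a)$ scalar forces $a$ normal), so the attaining pure state is unique; mixed states are then ruled out by extremality of boundary points of the strictly convex ellipse combined with this injectivity. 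This spectral argument is cleaner, replaces the paper's calculus entirely, and proves uniqueness among all state extensions directly, whereas the paper establishes uniqueness only among pure extensions and relies on the Krein--Milman remark at the start of Section \ref{S:pureuep}. Two small repairs are needed: your sample normal form $\begin{bmatrix}0&s\\0&1\end{bmatrix}$ is not reachable when the triangularized $a$ has equal diagonal entries (e.g.\ $a$ nilpotent up to a scalar); the paper's normalization $\frac{1}{g_{12}}g-\frac{g_{22}}{g_{12}}I$ covers all cases, and in any event your eigenvector argument needs no normal form at all. Also, the image of the Bloch sphere under $\rho\mapsto\tr(\rho a)$ is the whole filled ellipse, not just its boundary, so the statement to verify is not injectivity on the sphere but precisely the claim you state right after it, namely that each boundary point of $W(a)$ is attained by a unique pure state; the simple-eigenvalue observation above should be written out explicitly, since it is the exact point that carries the proof.
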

\begin{proof}
By virtue of Lemma \ref{L:2dim}, we may assume that $M$ has dimension $2$ and that $\rC^*(M)=\bM_2$. Choose an element $g\in M$ such that $M=\spn\{I,g\}$. Up to unitary equivalence, we may assume that $g$ is upper triangular, so that
\[
g=\begin{bmatrix}
g_{11} & g_{12} \\\ 0 & g_{22}
\end{bmatrix}
\] 
for some complex numbers $g_{11}, g_{12}, g_{22}$. Note that $g_{12}\neq 0$, for otherwise $\rC^*(M)$ is commutative, contrary to our assumption. 
Thus, replacing $g$ by 
\[
\frac{1}{g_{12}}g-\frac{g_{22}}{g_{12}}I
\]
we may assume that
\[
g=\begin{bmatrix}
te^{i\gamma} & 1 \\ 0 & 0
\end{bmatrix}
\]
for some $t\geq 0$ and $0\leq \gamma<2\pi$. 

To establish the desired result, we fix a pure state $\omega$ on $\bM_2$ whose restriction to $M$ is pure, and we aim to show that $\omega$ is uniquely determined by the number $\lambda=\omega(g)$.

Consider the numerical range of $g$, namely
\[
W=\{\phi(g):\phi\in \S(\bM_2)\}\subset \bC.
\]
The pure states on $\bM_2$ are precisely the vector states, so by the Krein--Milman theorem we see that $W$ coincides with the convex hull in $\bC$ of the numbers of the form $\langle g\xi,\xi\rangle$ where $\xi=(s,e^{i\alpha}\sqrt{1-s^2})\in \bC^2$ for some $0\leq s\leq 1$ and $ 0\leq \alpha<2\pi$. In other words, $W$ is the convex hull in $\bC$ of 
\[
\{te^{i\gamma}s^2+se^{i\alpha}\sqrt{1-s^2}: 0\leq s\leq 1, 0\leq \alpha<2\pi\}.
\]
Let $0\leq r\leq 1$ and $0\leq \theta<2\pi$ be chosen such that $\omega$ is the vector state corresponding to  $(r,e^{i\theta}\sqrt{1-r^2})\in \bC^2$. Our goal now becomes to show that $r$ and $\theta$ are uniquely determined by $\lambda$.

Since $\omega$ is pure on $M$, the number
\[ \lambda=\omega(g)=te^{i\gamma}r^2+re^{i\theta}\sqrt{1-r^2}\]
is an extreme point of $W$. On the other hand, it is known that $W$ is an ellipse (see for instance \cite{li1996}), so that $\lambda$ is an exposed point of $W$, in the sense that there is a supporting line for $W$ that intersects $W$ only at $\lambda$. In other words, there is a complex number $z$ such that 
\begin{equation}\label{Eq:exp}
\re (z\lambda)> \re(z\mu)
\end{equation}
for every $\mu\in W$ distinct from $\lambda$. 
Choose $\rho>0$ and $0\leq \beta<2\pi$ such that $z=\rho e^{i\beta}$.

Define now a function $f:[0,2\pi)\times [0,1]\to\bR$ as
\[
f(\alpha,s)=\re \left(z\left\langle g \begin{pmatrix} s\\ e^{i\alpha}\sqrt{1-s^2}\end{pmatrix}, \begin{pmatrix} s\\ e^{i\alpha}\sqrt{1-s^2}\end{pmatrix}\right\rangle\right)
\]
for each $\alpha\in [0,2\pi), s\in [0,1]$. By virtue of \eqref{Eq:exp}, to show that $r$ and $\theta$ are uniquely determined by $\lambda$, it suffices to show that $f$ attains its global maximum at a unique point $(\alpha_*,s_*)$.

We compute for $0\leq \alpha< 2\pi$ and $0\leq s\leq 1$ that
\begin{align*}
f(\alpha,s)&= \rho (s^2 t\cos(\gamma+\beta)+s\sqrt{1-s^2}\cos(\alpha+\beta)).
\end{align*}
Let now $\alpha_*\in [0,2\pi)$ be the unique number such that $\cos(\alpha_*+\beta)=1$. Then, for each $0\leq \alpha<2\pi$ distinct from $\alpha_*$ we see that
\[
f(\alpha,s)< f(\alpha_*,s), \quad 0\leq s\leq 1.
\]
Define $F:[0,1]\to\bR$ as
\[
F(s)=f(\alpha_*,s), \quad 0\leq s\leq 1.
\]
We need to show that $F$ attains its maximum at a unique point $s_*\in  [0,1]$. Differentiating, for $0<s<1$, we find
\[
F'(s)=\rho \left(2t s \cos(\gamma+\beta)+\frac{1-2s^2}{\sqrt{1-s^2}}\right)
\]
so that $F'(s)=0$  if and only if
\begin{equation}\label{Eq:crit}
2t s \cos(\gamma+\beta)=\frac{2s^2-1}{\sqrt{1-s^2}}.
\end{equation}
This equation implies
\[
Cs^4-Cs^2+1=0
\]
where $C=4(1+t^2 \cos^2(\gamma+\beta))$, so that
\[
s^2=\frac{1}{2}\pm\frac{1}{2}\sqrt{\frac{C-4}{C}}.
\]
On the other hand, since $0<s<1$, we note from \eqref{Eq:crit} that $s^2-1/2$ has the same sign as $\cos(\gamma+\beta)$. 

When $\cos(\gamma+\beta)\geq 0$, we see that 
\[
s_+=\sqrt{\frac{1}{2}+\frac{1}{2}\sqrt{\frac{C-4}{C}}}
\]
is the only possible critical point of $F$ in $(0,1)$ and
\[
F(s_+)=\frac{1}{4}(\sqrt{C-4}+\sqrt{C})
\]
so
\[
F(0)=0< F(1)=\frac{1}{2}\sqrt{C-4}< F(s_+).
\]

When $\cos(\gamma+\beta)< 0$, we see that 
\[
s_-=\sqrt{\frac{1}{2}-\frac{1}{2}\sqrt{\frac{C-4}{C}}}
\]
is the only possible critical point of $F$ in $(0,1)$ and 
\[
F(s_-)=\frac{1}{4}(-\sqrt{C-4}+\sqrt{C})
\]
so
\[
F(1)=-\frac{1}{2}\sqrt{C-4}<F(0)=0<F(s_-).
\]
In either case, we see that the maximum value of $F$ is attained at a unique point in $[0,1]$, as desired.
\end{proof}

In summary, any unital subspace of $\bM_2$ has the pure extension property, while there are subspaces of $\bM_4$ without this property \cite{sherman2023}. We do not know what happens for subspaces in $\bM_3$.

\section{Excisions}\label{S:excision}

The previous section illustrates that in order to obtain a complete answer to our main question, one must look further than the pure extension property. In this section, we draw inspiration from another source, based on the following notion.

Let $B$ be a unital $\rC^*$-algebra and let $\omega$ be a state on $B$. A contractive net $(e_\lambda)$ in $B$ is an \emph{excision} for $\omega$ if $\lim_\lambda\omega(e_\lambda)=1$ and
\[
\lim_\lambda \|e^*_\lambda (b-\omega(b)I)e_\lambda\|=0, \quad b\in B.
\]
Equivalently, the second condition means that
\[
\limsup_{\lambda} \|e_\lambda^* b e_\lambda\|\leq |\omega(b)|, \quad b\in B.
\]
Replacing nets with sequences, this notion was studied in \cite{clouatre2018lochyp} under the name ``characteristic sequence". The argument used to prove \cite[Theorem 4.2]{clouatre2018lochyp} adapts verbatim to show that the state $\omega$ is automatically pure if it admits an excision; this fact will be used below. 

We record the following basic well-known facts.

\begin{lemma}\label{L:suppproj}
Let $B$ be a unital $\rC^*$-algebra and let $\omega$ be a pure state on $B$.  If $\xi\in B^{**}$ is a contraction, then $\omega(\xi)=1$ if and only if $\xi \fl_\omega=\fl_\omega$.
\end{lemma}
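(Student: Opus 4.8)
The plan is to recall the standard facts about the left support projection of a pure state and reduce the claim to a familiar rank-one/purity computation in $B^{**}$.

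First I would set up the GNS picture. Let $\sigma:B\to B(H)$ be the GNS representation of $\omega$ with cyclic unit vector $\eta\in H$; since $\omega$ is pure, $\sigma$ is irreducible. The normal extension $\bar\sigma:B^{**}\to B(H)$ is a weak-$*$ continuous $*$-homomorphism onto $B(H)$, and the left support $\fl_\omega$ is characterized by $\bar\sigma(\fl_\omega)$ being the projection onto $\bar\sigma(B)\eta=H$ along... more precisely, one uses that $\fl_\omega$ is the smallest projection $p\in B^{**}$ with $\omega(p)=1$, and because $\omega$ is pure it is in fact a minimal projection: $\fl_\omega B^{**}\fl_\omega=\bC\fl_\omega$ and $\omega(\fl_\omega x\fl_\omega)=\omega(x)$ for the central-support-adjusted part, while $\bar\sigma(\fl_\omega)$ is the rank-one projection $\eta\eta^*$. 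This is the content I would cite or quote as standard (Akemann, Pedersen's book on $\rC^*$-algebras and their automorphism groups, around the discussion of pure states and minimal projections in $B^{**}$).

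Now the equivalence. For the easy direction, suppose $\xi\fl_\omega=\fl_\omega$. Then $\omega(\xi)=\omega(\xi\fl_\omega)=\omega(\fl_\omega)=1$, using $\omega(y\fl_\omega)=\omega(y)$ for all $y\in B^{**}$ (which follows since $\omega((I-\fl_\omega)^*(I-\fl_\omega))=0$ and Cauchy--Schwarz gives $|\omega(y(I-\fl_\omega))|^2\le\omega(yy^*)\omega((I-\fl_\omega)^2)=0$). Conversely, suppose $\xi$ is a contraction with $\omega(\xi)=1$. Apply $\bar\sigma$: then $\langle\bar\sigma(\xi)\eta,\eta\rangle=1$ while $\|\bar\sigma(\xi)\|\le\|\xi\|\le 1$, so by the equality case of Cauchy--Schwarz $\bar\sigma(\xi)\eta=\eta$, i.e. $\bar\sigma(\xi)\bar\sigma(\fl_\omega)=\bar\sigma(\fl_\omega)$, since $\bar\sigma(\fl_\omega)=\eta\eta^*$. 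This says $\bar\sigma(\xi\fl_\omega-\fl_\omega)=0$. To conclude $\xi\fl_\omega=\fl_\omega$ in $B^{**}$ itself (not just after applying $\sigma$), I would use that $\fl_\omega$ has central support equal to the central projection $z$ supporting $\bar\sigma$, so that multiplication by $\fl_\omega$ on the right lands in $B^{**}z\cong B(H)$ where $\bar\sigma$ restricts to a $*$-isomorphism; hence $\bar\sigma$ is injective on $B^{**}\fl_\omega$, giving $\xi\fl_\omega=\fl_\omega$.

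The main obstacle is the last step: passing from $\bar\sigma(\xi\fl_\omega)=\bar\sigma(\fl_\omega)$ back to an identity in $B^{**}$, which requires knowing that $\bar\sigma$ is faithful on the corner $B^{**}\fl_\omega$ (equivalently on $z B^{**}$ where $z$ is the central support of $\fl_\omega$). This is exactly the standard structure theory of the universal enveloping von Neumann algebra: $zB^{**}$ is $*$-isomorphic, via $\bar\sigma$, to $B(H)$, because $\sigma$ irreducible forces $z$ to be a minimal central projection with $zB^{**}\cong B(H)$. I would invoke this (e.g. from Pedersen's book, or Takesaki) rather than reprove it. Everything else is a short Cauchy--Schwarz argument, so once that structural fact is in hand the proof is immediate.
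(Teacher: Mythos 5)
Your argument is correct, but it takes a noticeably heavier route than the paper for the nontrivial direction. The paper's proof is purely algebraic and two lines long: if $\|\xi\|\le 1$ and $\omega(\xi)=1$, then $1=|\omega(\xi)|^2\le\omega(\xi^*\xi)\le 1$ puts $\xi$ in the multiplicative domain of (the normal extension of) $\omega$, so $\omega((I-\xi)^*(I-\xi))=0$, and by the very definition of the left support ($B^{**}(I-\fl_\omega)=\{x:\omega(x^*x)=0\}$) this gives $(I-\xi)\fl_\omega=0$; the converse is the same computation read backwards via Cauchy--Schwarz. In particular the paper never needs purity of $\omega$. You instead pass to the GNS representation, use irreducibility to identify $\bar\sigma(B^{**})$ with $B(H)$ on the central summand $zB^{**}$, identify $\bar\sigma(\fl_\omega)$ with the rank-one projection $\eta\eta^*$, and then pull the identity $\bar\sigma(\xi\fl_\omega-\fl_\omega)=0$ back to $B^{**}$ using $\fl_\omega\le z$ and injectivity of $\bar\sigma$ on $B^{**}z$. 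All of these structural facts are standard and your use of them is sound (the key points --- $\fl_\omega$ coincides with the support projection of the normal extension, $\fl_\omega\le z$, and $\ker\bar\sigma=B^{**}(I-z)$ with $z$ central --- do hold), so the proof goes through; what the extra machinery buys you is a concrete picture of $\fl_\omega$ as a minimal projection mapping to $\eta\eta^*$, which is more than the lemma needs. The one place to be careful is the silent identification of the paper's left support (defined through the left kernel) with the smallest projection of value $1$; this is the standard support-projection fact for normal states and should be cited or checked, but once stated your reduction is complete. The directly kernel-based argument avoids this identification and the central-cover bookkeeping altogether.
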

\begin{proof}
Assume $\xi \in B^{**}$ is a contraction such that $\omega(\xi)=1$.  Then, $\xi$ lies in the multiplicative domain of $\omega$ \cite[Theorem 3.18]{paulsen2002}, so $\omega((I-\xi)^*(I-\xi))=0$ and  $(I-\xi)\fl_\omega=0$, or $\xi \fl_\omega=\fl_\omega$. Conversely, if $\xi\fl_\omega=\fl_\omega$, then $\omega((I-\xi)^*(I-\xi))=0$ which implies $\omega(\xi)=1$.
\end{proof}

\begin{lemma}\label{L:limitexcision}
Let $B$ be a unital $\rC^*$-algebra and let $\omega$ be a pure state on $B$. Let $(e_\lambda)$ be an excision for $\omega$. Then, $(e_\lambda)$ converges to $\fl_\omega$ in the weak-$*$ topology of $B^{**}$.
\end{lemma}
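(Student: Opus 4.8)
The statement is that any excision $(e_\lambda)$ for a pure state $\omega$ must converge weak-$*$ to the left support $\fl_\omega \in B^{**}$. Since $(e_\lambda)$ is a bounded net in $B^{**}$, it has weak-$*$ cluster points, and it suffices to show that \emph{every} weak-$*$ cluster point equals $\fl_\omega$; then the net converges to $\fl_\omega$. So fix a subnet $(e_{\lambda_\mu})$ converging weak-$*$ to some contraction $x \in B^{**}$, and aim to prove $x = \fl_\omega$.

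First I would record what the excision condition gives in the limit. From $\lim_\lambda \omega(e_\lambda) = 1$ and weak-$*$ continuity of $\omega$ (as a normal functional on $B^{**}$) along the subnet, we get $\omega(x) = 1$. By Lemma \ref{L:suppproj} (applied in $B^{**}$, with $x$ a contraction and $\omega$ pure) this already forces $x\fl_\omega = \fl_\omega$. The remaining content is the reverse relation, roughly that $x$ is ``supported on'' $\fl_\omega$, i.e. $x = \fl_\omega x = \fl_\omega$. The natural way to extract this is from the second excision condition $\limsup_\lambda \|e_\lambda^* b e_\lambda\| \leq |\omega(b)|$ for $b \in B$. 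Taking $b = I - \fl_\omega$ is not directly allowed since $\fl_\omega \notin B$, but $\fl_\omega$ is closed, so it is a decreasing weak-$*$ limit of positive contractions $f_\nu \in B$ with $\omega(f_\nu) = 1$ (indeed $\fl_\omega \leq f_\nu \leq I$). Applying the excision bound to $b = I - f_\nu$ gives $\limsup_\lambda \|e_\lambda^*(I - f_\nu)e_\lambda\| \leq |\omega(I - f_\nu)| = 0$, hence $e_\lambda^* e_\lambda - e_\lambda^* f_\nu e_\lambda \to 0$ in norm.

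Next I would pass this to the weak-$*$ limit along the chosen subnet. Multiplication in $B^{**}$ is only separately weak-$*$ continuous, not jointly, so some care is needed; the standard trick is to use that $e_{\lambda_\mu} \to x$ weak-$*$ and the products $e_{\lambda_\mu}^* y e_{\lambda_\mu}$ are uniformly bounded, passing first to a further subnet if necessary so that $e_{\lambda_\mu}^* e_{\lambda_\mu}$, $e_{\lambda_\mu}^* f_\nu e_{\lambda_\mu}$, and $e_{\lambda_\mu}$ all converge weak-$*$. In fact it is cleaner to test against vector states / against elements of the predual: for $\phi$ a normal state, $\phi(e_\lambda^* f_\nu e_\lambda) \to$ something one can control, and the norm estimate above shows the difference from $\phi(e_\lambda^* e_\lambda)$ vanishes. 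The upshot I expect to reach is that $x^* x \leq x^* \fl_\omega x$ combined with $x^*\fl_\omega x \leq x^* x$ (from $\fl_\omega \leq I$), so $x^*(I - \fl_\omega)x = 0$, i.e. $(I-\fl_\omega)x = 0$, i.e. $\fl_\omega x = x$. Together with $x\fl_\omega = \fl_\omega$ from the first paragraph, and using that $\omega$ pure makes $\fl_\omega$ a minimal projection in the relevant sense, one concludes $x = \fl_\omega x = \fl_\omega x \fl_\omega$; since $\omega(x) = 1$ and $\fl_\omega B^{**}\fl_\omega = \bC\fl_\omega$ (purity), $\fl_\omega x \fl_\omega = \omega(x)\fl_\omega = \fl_\omega$, giving $x = \fl_\omega$.

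\textbf{Main obstacle.} The delicate point is the interchange of the weak-$*$ limit with the products $e_\lambda^* b e_\lambda$ — joint weak-$*$ continuity fails, so one cannot naively ``take limits inside''. I would handle this either by reducing everything to statements about a single normal functional (the predual of $B^{**}$), where the relevant bilinear estimates survive because the norm convergence $\|e_\lambda^* e_\lambda - e_\lambda^* f_\nu e_\lambda\| \to 0$ is uniform, or by quoting the analogous manipulation already carried out in \cite{clouatre2018lochyp} in the ``characteristic sequence'' setting (the paper notes that argument adapts verbatim). A secondary subtlety is that one really wants the conclusion for the full net, not a subnet; this is fine once one knows all cluster points coincide with $\fl_\omega$, since a bounded net in a compact (weak-$*$) space with a unique cluster point converges.
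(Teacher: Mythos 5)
Your overall strategy (reduce to showing that every weak-$*$ cluster point $x$ of the net equals $\fl_\omega$, obtain $x\fl_\omega=\fl_\omega$ from $\omega(x)=1$ via Lemma \ref{L:suppproj}, then kill $(I-\fl_\omega)x$ using the excision estimate against approximants of $\fl_\omega$ from inside $B$) is the right one and is close in spirit to the paper's. However, the step you yourself flag as the main obstacle is genuinely missing, and the route you sketch for it would not work as stated. You propose to pass $\|e_\lambda^*e_\lambda-e_\lambda^*f_\nu e_\lambda\|\to 0$ to the weak-$*$ limit and arrive at $x^*x\leq x^*\fl_\omega x$. Testing against a state $\phi$ (a vector state in the universal representation), weak-$*$ convergence $e_\lambda\to x$ only yields \emph{lower} semicontinuity, $\phi(x^*cx)\leq\liminf_\lambda\phi(e_\lambda^*ce_\lambda)$ for a fixed positive $c\in B$; there is no matching upper bound $\limsup_\lambda\phi(e_\lambda^*f_\nu e_\lambda)\leq\phi(x^*f_\nu x)$ (take $e_\lambda$ unitaries converging weak-$*$ to $0$ and $f_\nu=I$), so the inequality $x^*x\leq x^*\fl_\omega x$ cannot be extracted this way. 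The same ingredients do work if used the other way around: apply lower semicontinuity to the positive element $I-f_\nu\in B$ itself. Since $\limsup_\lambda\|e_\lambda^*(I-f_\nu)e_\lambda\|\leq\omega(I-f_\nu)=0$, you get $\phi(x^*(I-f_\nu)x)=0$ for every state $\phi$, hence $(I-f_\nu)x=0$ for each $\nu$, and letting $f_\nu\searrow\fl_\omega$ (separate weak-$*$ continuity of multiplication) gives $(I-\fl_\omega)x=0$. The paper instead avoids quadratic expressions in $e_\lambda$ altogether: it takes an increasing net $(b_\mu)$ of positive contractions in $B$ converging to the open projection $I-\fl_\omega$, bounds $|\phi(b_\mu e_\lambda)|^2\leq\phi(e_\lambda^*b_\mu^2e_\lambda)$ by Cauchy--Schwarz, and then invokes the excision norm bound $\limsup_\lambda\|e_\lambda^*b_\mu^2e_\lambda\|\leq\omega(b_\mu^2)\leq\omega(b_\mu)\to\omega(I-\fl_\omega)=0$; only one factor of $e_\lambda$ ever sits inside the functional, so separate weak-$*$ continuity suffices. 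Either repair is fine, but as written this central step of your proposal is not established.

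A second, smaller problem is the endgame. From $(I-\fl_\omega)x=0$ you write $x=\fl_\omega x=\fl_\omega x\fl_\omega$, but the second equality presupposes $x\fl_\omega=x$, which is not among your facts (you have $x\fl_\omega=\fl_\omega$); as written the argument is circular, since $x=\fl_\omega x\fl_\omega$ is equivalent to the very identity $x(I-\fl_\omega)=0$ you are trying to reach. No minimality of $\fl_\omega$ is needed: since $\omega(x)=1$ we also have $\omega(x^*)=1$, so Lemma \ref{L:suppproj} applied to the contraction $x^*$ gives $\fl_\omega x=\fl_\omega$ (this is exactly what the paper does), and then $x=\fl_\omega x+(I-\fl_\omega)x=\fl_\omega$. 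This is a one-line fix, but together with the unproved limit-interchange it means the proposal, as it stands, does not yet constitute a proof.
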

\begin{proof}
By compactness of the closed unit ball of $B^{**}$ in the weak-$*$ topology, it suffices to show that $\fl_\omega$ is the only cluster point of the net $(e_\lambda)$. Fix such a cluster point $\xi\in B^{**}$, so that $\|\xi\|\leq 1$. Upon passing to cofinal subnet if necessary, we may thus assume that $(e_\lambda)$ converges to $\xi$ in the weak-$*$ topology of $B^{**}$.
Because
\[
\omega(\xi)=\lim_\lambda\omega(e_\lambda)=1
\]
we have $\omega(\xi^*)=1$ and we conclude  from Lemma \ref{L:suppproj}  that $\xi^* \fl_\omega=\fl_\omega$, or $\fl_\omega=\fl_\omega \xi$. It thus only remains to show that $(I-\fl_\omega)\xi=0$. 

Since $\omega$ is assumed to be pure, the projection $\fl_\omega$ is closed \cite[Lemma 2.6]{CTh2023}. We may then choose an increasing net $(b_\mu)$ of positive contractions in $B$ converging to $I-\fl_\omega$ in the weak-$*$ topology of $B^{**}$.  Let $\phi$ be a state on $B$. We find
\begin{align*}
|\phi((I-\fl_\omega)\xi)|^2&=\lim_\mu\lim_\lambda |\phi(b_\mu e_\lambda)|^2\leq\limsup_\mu \limsup_\lambda \phi(e_\lambda^*b_\mu^2 e_\lambda)\\
&\leq \limsup_\mu \omega(b_\mu^2)\leq \limsup_\mu\omega(b_\mu)\\
&=\omega(I-\fl_\omega)=0
\end{align*}
by virtue of the Schwarz inequality and the excising property of $(e_\lambda)$. Since $\phi$ was an arbitrary state on $B$, this implies that $(I-\fl_\omega)\xi=0$ as desired.
\end{proof}

We also require the following.

\begin{theorem}[Anderson]\label{T:andersonexc}
Let $B$ be a unital $\rC^*$-algebra and let $A\subset B$ be a unital $\rC^*$-subalgebra. Let $\omega$ be a pure state on $B$. Then, the following statements are equivalent.
\begin{enumerate}[{\rm (i)}]
\item $\omega|_A$ admits a unique extension to a state on $B$.
\item $\omega$ admits an excision in $A$.
\end{enumerate}
\end{theorem}
\begin{proof}
If either (i) or (ii) holds, then $\omega|_A$ is pure. Hence, the result follows from  \cite[Theorem 3.2]{anderson1979}.
\end{proof}

This result makes it clear that excisions are  meaningful objects for the purposes of answering our main question on unique state extensions, and so we examine them a bit closer. In particular, we aim to relate their existence to a form of noncommutative peaking, which we describe next.

Let $B$ be a unital $\rC^*$-algebra and let $\S(B)$ denote its state space.  Let $\omega$ be a state on $B$, and let $\fl_\omega\in B^{**}$ denote its left support projection, that is the unique projection in $B^{**}$ satisfying 
\[
B^{**}(I-\fl_\omega)=\{x\in B^{**}:\omega(x^*x)=0\}.
\]
It follows that $\omega(\fl_\omega)=1$ and that $\fl_\omega$ lies in the multiplicative domain of $\omega$.

Let $X\subset B$ be any subset. 
Inspired by \cite{hay2007}, we say that $\fl_\omega$ is a \emph{$X$-peak projection} when there is $x\in X$ with $\|x\|=1$ such that $x\fl_\omega=\fl_\omega$ and $\|xp\|<1$ for every closed projection $p\in B^{**}$ orthogonal to $\fl_\omega$.  This property may be reformulated entirely in terms of states, as follows.

\begin{proposition}\label{P:peakprojnonorthog}
Assume that $\omega$ is a pure state on $B$. Then, the  following statements are equivalent.
\begin{enumerate}[{\rm (i)}]
\item $\fl_\omega$ is an $X$-peak projection.
\item There is $x\in X$ with $\|x\|=1$ such that $\omega(x)=1$ and $\phi(x^*x)<1$ if $\phi$ is a state on $B$ satisfying $\phi(\fl_\omega)=0$.
\item There is $x\in X$ such that $\omega(x)=1$ and $\phi(x^*x)<1$ if $\phi$ is a state on $B$ distinct from $\omega$.
\end{enumerate}
If we assume in addition that $X$ is a unital subspace, then these statements are further equivalent to the following.
\begin{enumerate}
\item[{\rm (iv)}]   There is $x\in X$ with $\|x\|=1$ such that $\omega(x)=1$ and $|\phi(x)|<1$ if $\phi$ is a state on $B$ satisfying $\phi(\fl_\omega)=0$.
\item[{\rm (v)}]   There is $x\in X$ with $\|x\|=1$ such that $\omega(x)=1$ and $|\phi(x)|<1$ if $\phi$ is a state on $B$ distinct from $\omega$.
\end{enumerate}
\end{proposition}
\begin{proof}
(i) $\Leftrightarrow$ (ii): This equivalence is contained in \cite[Theorem 5.1]{hay2007}.

(iii) $\Rightarrow$ (ii): We need only verify that $\|x\|=1$, which in this setting is equivalent to $\omega(x^*x)\leq 1$.  This is trivially true if $x$ is a scalar multiple of the identity, so we may suppose that $B$ has dimension bigger than $1$, in which case there exists a state $\psi$  on $B$ distinct from $\omega$. Given $0<\eps<1$, we see that $(1-\eps) \omega+\eps\psi$ is a state distinct from $\omega$, so by assumption 
\[
(1-\eps)\omega(x^*x)+\eps\psi(x^*x)<1.
\]
Letting $\eps\to 0$, we find $\omega(x^*x)\leq 1$ as desired.

(ii) $\Rightarrow$ (iii): Let $\phi$ be a state on $B$ distinct from $\omega$. By \cite[Lemma 2.2]{clouatre2018lochyp}, we see  that $\phi(\fl_\omega)<1$. Define $\psi:B\to \bC$ as \[\psi=\frac{1}{\phi(I-\fl_\omega)}\phi((I-\fl_\omega)\cdot (I-\fl_\omega)).\] Clearly, $\psi$ is a state on $B$ with $\psi(\fl_\omega)=0$, so by assumption $\psi(x^*x)<1$.
Next, by Lemma \ref{L:suppproj} we have that $x\fl_\omega=\fl_\omega$, which forces $x$ to commute with $\fl_\omega$ since $\|x\|=1$. We conclude that $x^*x=\fl_\omega+(I-\fl_\omega)x^*x(I-\fl_\omega)$, so that
\begin{align*}
\phi(x^*x)=\phi(\fl_\omega)+\phi(I-\fl_\omega)\psi(x^*x)<\phi(\fl_\omega)+\phi(I-\fl_\omega)=1.
\end{align*}

(ii) $\Rightarrow$ (v): The Schwarz inequality implies that $|\phi(x)|^2\leq \phi(x^*x)$ for any state $\phi$ on $B$.

(v) $\Rightarrow$ (iv): This is trivial.

\noindent Assume henceforth that $X$ is a unital subspace. 

(iv) $\Rightarrow$ (ii):  There is $x\in X$ with $\|x\|=1$ such that $\omega(x)=1$ and $|\phi(x)|<1$ if $\phi$ is a state on $B$ satisfying $\phi(\fl_\omega)=0$. As in \cite[Lemma 3.7]{hay2007}, we consider $y=(I+x)/2\in X$. Then, $\|y\|\leq 1$ and $\omega(y)=1$. We compute
\[
y^*y=\frac{1}{4}(I+x+x^*+x^*x)
\]
so that, given a state $\phi$ on $B$, we have
\[
\phi(y^*y)=\frac{1}{4}(1+2\re \phi(x)+\phi(x^*x))\leq \frac{1}{4}(2+2|\phi(x)|)\leq 1.
\]
Hence, $\phi(y^*y)=1$ implies that $|\phi(x)|=1$, and in turn we find $\phi(\fl_\omega)\neq 0$.
\end{proof}

One consequence of the previous result is that the notion of peak projection considered in \cite{clouatre2018lochyp} coincides with that introduced earlier in \cite{hay2007}, and hence with our current working definition, contrary to the guess formulated by the author at the time of writing \cite{clouatre2018lochyp}.

We now arrive to our result connecting excision and peaking. In the proof, we use a highly non-trivial fact about the existence of certain approximate units. It is a combination, due to Blecher, Hay, Neal and Read (and various subsets thereof), of known deep results from \cite{hay2007},\cite{BHN2008} and \cite{read2011}.

\begin{proposition}\label{P:excisionpeak}
Let $B$ be a unital $\rC^*$-algebra and let $A\subset B$ be a unital norm-closed subalgebra. Let $\omega$ be a pure state on $B$. Then, the following statements are equivalent.
\begin{enumerate}[{\rm (i)}]
\item There is a net  in $A$ forming an excision for $\omega$.

\item The projection $\fl_\omega$ lies in $A^{\perp\perp}$.
\end{enumerate}
\end{proposition}
\begin{proof}
(i) $\Rightarrow$ (ii): This follows at once from Lemma \ref{L:limitexcision}, since $A^{\perp\perp}$ coincides with the weak-$*$ closure of $A$ inside of $B^{**}$.

(ii) $\Rightarrow$ (i): The projection $\fl_\omega$ is  closed since $\omega$ is pure \cite[Lemma 2.6]{CTh2023}. By \cite[Corollary 2.25]{BR2011}, there is a contractive approximate right unit $(I-e_\lambda)$ for the left ideal $L_\omega=\{b\in B:b\fl_\omega=0\}$ converging to $I-\fl_\omega$ in the weak-$*$ topology of $B^{**}$ and with the property that each $e_\lambda$ is contractive. Note then that $\lim_\lambda\omega(e_\lambda)=\omega(\fl_\omega)=1$ and
\[
\lim_\lambda \|be_\lambda\|=0, \quad b\in L_\omega.
\]
We can now proceed as in the proof of  \cite[Theorem 2.3]{AAP1986}. Let $b\in B$. Then, $b-\omega(b)I\in \ker \omega$. The purity of $\omega$ implies that $\ker \omega=L_\omega^*+L_\omega$ \cite[Proposition 2.9.1]{dixmier1977}, so we may find $s,t\in L_\omega$ with $b-\omega(b)I=s^*+t.$ We obtain
\begin{align*}
\limsup_\lambda\| e_\lambda^*(b-\omega(b)I)e_\lambda\|&= \limsup_\lambda \|e_\lambda^* (s^*+t) e_\lambda\|\\
&\leq \limsup_\lambda (\|e_\lambda^* s^*\|+\|t e_\lambda\|)\\
&= \limsup_\lambda (\|se_\lambda\|+\|t e_\lambda\|)\\
&=0.
\end{align*}
We conclude that $(e_\lambda)$ is an excision for $\omega$.
\end{proof}

The following consequence summarizes the relationships between our  notions of interest.

\begin{corollary}\label{C:excisionequiv}
Let $B$ be a unital $\rC^*$-algebra and let $A\subset B$ be a unital, separable, norm-closed subalgebra. Let $\omega$ be a pure state on $B$. Consider the following statements.
\begin{enumerate}[{\rm (i)}]
\item $\omega$ admits an excision in $A$.

\item $\fl_\omega$ is an $A$-peak projection.

\item The restriction  $\omega|_A$ admits a unique extension to a state on $B$.

\end{enumerate}
Then, we have that 
\[
{\rm (i)} \Leftrightarrow{\rm (ii)}\Rightarrow {\rm (iii)}.
\]
\end{corollary}
\begin{proof}
(i) $\Leftrightarrow$ (ii): This is Proposition \ref{P:excisionpeak} combined with \cite[Corollary 3.3]{CTh2022}.

(ii) $\Rightarrow$ (iii): This can be found in \cite[Proposition 3.2]{CTh2023}.

\end{proof}

When $A$ is a $\rC^*$-algebra, then statements (i),(ii) and (iii) are equivalent by virtue of Theorem \ref{T:andersonexc}. We  show next that the implications (iii)$\Rightarrow$(ii) fails in the nonselfadjoint setting.

\begin{example}\label{E:ueppeakproj}
Let $A\subset \bM_2$ be the algebra of upper triangular matrices. Then, $A+A^*=\bM_2$, which readily implies that every state on $A$ has a unique extension to a state on $\bM_2$. Denote by $\{e_1,e_2\}$ the standard orthonormal basis of $\bC^2$, and put $\xi=\frac{1}{\sqrt{2}}(e_1+e_2)$. Let $\omega:\bM_2\to \bC$ be the pure state defined as
\[
\omega(t)=\langle t\xi,\xi \rangle,\quad t\in \bM_2.
\]
Then, $\fl_\omega$ is the rank-one projection onto $\bC \xi$, which does not lie in $A=A^{\perp\perp}$. We conclude from Proposition \ref{P:excisionpeak} that $\omega$ does not admit an excision in $A$, so in particular $\fl_\omega$ is not an $A$-peak projection by Corollary \ref{C:excisionequiv}. \qed
\end{example}

In light of this example, we see that excisions and peak projections do not characterize unique extensions for states, and hence do not provide an answer to our main question. Indeed, a weaker notion of peaking is needed.

\section{Weaker notions of noncommutative peak points}\label{S:ncpeak}
In the previous section, given a state $\omega$ on a unital $\rC^*$-algebra $B$ along with a unital subspace $M\subset B$,  we saw that the property that $\fl_\omega$ be an $M$-peak projection is typically strictly stronger than that of $\omega|_M$ admitting a unique extension to a state on $B$. We are thus lead to relax this notion of peaking, in hopes of precisely characterizing the unique extension property. Two natural such relaxations are examined in this section.

As before, we denote by $\S(B)$ the state space of $B$. Let $X\subset B$ and  $E\subset \S(B)$ be  non-empty subsets. We say that $E$ is an \emph{$X$-peak set} if there is $x\in X$ such that $\omega(x)=1$ for every $\omega\in E$, while $|\psi(x)|<1$ for every $\psi\in \S(B)\setminus E$. We record a basic property of these sets.

\begin{lemma}\label{L:peakpure}
Let $B$ be a unital $\rC^*$-algebra and let $X\subset B$ be a non-empty subset. Then, any $X$-peak set must contain a pure state on $B$.
\end{lemma}
\begin{proof}
Let $E\subset \S(B)$ be a non-empty $X$-peak set. Then, there is $x\in X$ such that $\omega(x)=1$ for every $\omega\in E$, while $|\psi(x)|<1$ for every $\psi\in \S(B)\setminus E$. This means that the weak-$*$ continuous convex function 
\[
\phi\mapsto |\phi(x)|, \quad \phi\in \S(B)
\]
must attain its maximum on $E$. By virtue of Bauer's maximum principle \cite[Theorem I.5.3]{alfsen1971}, we conclude that $E$ contains a pure state.
\end{proof}

Next, let $M\subset B$ be a unital subspace. Put $\Q(M)=\{a^*a:a\in M\}$. Let also $\omega$ be a state on $B$. Although the above definition of a peak set allows for arbitrary choices of $X$ and $E$, for the rest of the paper we will be solely interested in  the cases where $X$ is either $M$ or $\Q(M)$, and $E$ is either $\{\omega\}$ or a subset of the collection $E_\omega$ of states on $B$ extending $\omega|_M$. 
When $E=\{\omega\}$ is an $X$-peak set, then we say that  $\omega$ is an \emph{$X$-peak state}.

\begin{corollary}\label{C:projstate}
Let $B$ be a unital $\rC^*$-algebra and let $M\subset B$ be a unital subspace. Let $\omega$ be a  state on $B$ such that $\fl_\omega$ is an $M$-peak projection. Then, $\omega$ is both an $M$-peak state and a $\Q(M)$-peak state.
\end{corollary}
\begin{proof}
This is an immediate consequence of the Schwarz inequality along with Proposition \ref{P:peakprojnonorthog}. 
\end{proof}
We thus have the following diagram.
\begin{equation}\label{Eq:imp1}
\begin{tikzcd}[arrows=Rightarrow]
\substack{\fl_\omega \text{  is}\\ M\text{-peak projection}} &  \substack{\omega\text{  has}\\ M\text{-peak support}} \arrow[l] \arrow[r] & \substack{\omega \text{  is}\\ \Q(M)\text{-peak state}}
\end{tikzcd}
\end{equation}

In Section \ref{S:peakuep}, we shall see that neither implication can be reversed in general (see \eqref{Eq:diagram}).

For now, we illustrate how certain special peak sets arise. Note that if $x\in M$ has norm $1$, then the set $\{\phi\in \S(B):\phi(x^*x)=1\}$ is clearly a $\Q(M)$-peak set. Something similar is true for $M$-peak sets, as we show next.

Given an element $b\in B$, we define its \emph{numerical radius} as
\[
\nr(b)=\sup_{\phi\in \S(B)}|\phi(b)|,
\]
which then satisfies
\[
\|b\|/2\leq \nr(b) \leq \|b\|. 
\]

\begin{lemma}\label{L:halftrick}
Let $B$ be a unital $\rC^*$-algebra and let $M\subset B$ be a unital subspace. Let $x\in M$ be an element such that $\nr(x)=1$. Then, the set
$
\{\phi\in \S(B):\phi(x)=1\}
$
is an $M$-peak set.
\end{lemma}
\begin{proof}
Let $y=\frac{1}{2}(I+x)\in M$, so that $\nr(y)\leq 1$. An elementary calculation reveals that a state $\phi$ on $B$  satisfies $|\phi(y)|=1$ precisely when $\phi(x)=\phi(y)=1$. Hence,  $\{\phi\in \S(B):\phi(x)=1\}$ is an $M$-peak set.
\end{proof}

We now define four sets for an element $b\in B$:
\[
N^\Q_b=\{\phi\in \S(B):\phi(b^*b)=\|b\|^2\}, \quad R^\Q_b=\{\phi|_M:\phi\in N^\Q_b\}
\]
and
\[
N^{\nr}_b=\{\phi\in \S(B):|\phi(b)|=\nr(b)\}, \quad R^{\nr}_b=\{\phi|_M:\phi\in N^{\nr}_b\}.
\]
Given a state $\omega$ on $B$, we define $\Sigma^\Q_\omega\subset B$ to be the subset  of elements $b$ with $\|b\|=1$ for which $R^\Q_b=\{\omega|_M\}$. Likewise, $\Sigma^{\nr}_\omega\subset B$ is the subset  of elements $b$ with $\nr(b)=1$ for which $R^{\nr}_b=\{\omega|_M\}$. The following will be of use in Section \ref{S:abundance}.

\begin{proposition}\label{P:peakset}
Let $B$ be a unital $\rC^*$-algebra and let $M\subset B$ be a unital subspace.
Let $\omega$ be a state on $B$. Consider the following statements.
\begin{enumerate}[{\rm (i)}]
\item $\omega$ is a $\Q(M)$-peak state.
\item There is a non-empty $\Q(M)$-peak set contained in $E_\omega$.
\item There is $x\in M\cap \Sigma^\Q_\omega$ such that $\{\omega|_M\}= R^\Q_x$.
\item $\omega$ is an $M$-peak state.
\item There is a non-empty $M$-peak set contained in $E_\omega$.
\item There is $x\in M\cap \Sigma^{\nr}_\omega$ such that $\{\omega|_M\}= R^{\nr}_x$.

\end{enumerate}
Then, we have that ${\rm (i)} \Rightarrow{\rm(ii)} \Leftrightarrow{\rm(iii)}$ and ${\rm (iv)} \Rightarrow{\rm(v)} \Leftrightarrow{\rm(vi)}$. 
When $\omega|_M$ admits a unique extension to a state on $B$, then ${\rm (i)} \Leftrightarrow{\rm(ii)} \Leftrightarrow{\rm(iii)}$ and ${\rm (iv)} \Leftrightarrow{\rm(v)} \Leftrightarrow{\rm(vi)}$. 
\end{proposition}

\begin{proof}
(i)$\Rightarrow$(ii) and (iv)$\Rightarrow$(v): The peak set can trivially be taken to be $\{\omega\}$ in both cases.

(ii)$\Rightarrow$(iii): By assumption, there is $x\in M$ such that $\phi(x^*x)=1>\psi(x^*x)$ for every $\phi\in E$ and $\psi\in \S(B)\setminus E$. In particular, $\|x\|=1$ and we see that  $E=N^\Q_x$. Since $E\subset E_\omega$ this implies  $R^\Q_x=\{\omega|_M\}$ and $x\in \Sigma^\Q_\omega$.

(iii)$\Rightarrow$(ii): By assumption, there is $x\in \Sigma^\Q_\omega$ such that $\{\omega|_M\}= R^\Q_x$. Then,  $N^\Q_x\subset E_\omega$ is the desired non-empty $\Q(M)$-peak set.

(v)$\Rightarrow$(vi): By assumption, there is $x\in M$ such that $\phi(x)=1>|\psi(x)|$ for every $\phi\in E$ and $\psi\in \S(B)\setminus E$. In particular, $\nr(x)=1$ and  we see that  $E=N^{\nr}_x$. Since $E\subset E_\omega$ this implies  $R^{\nr}_x=\{\omega|_M\}$ and $x\in \Sigma^{\nr}_\omega$.

(vi)$\Rightarrow$(v): By assumption, there is $x\in \Sigma^{\nr}_\omega$ such that $\{\omega|_M\}= R^{\nr}_x$. Choose $\zeta\in \bC$ with $|\zeta|=1$ such that $\omega(\zeta x)=1$, and put $y=\zeta x\in M$. Then, $R^{\nr}_y=R^{\nr}_x=\{\omega|_M\}$. Applying Lemma \ref{L:halftrick}, we see that $\{\phi\in \S(B):\phi(y)=1\}$ is an $M$-peak set contained in $ E_\omega$, and it is non-empty since it contains $\omega$.

The final sentence is clear, since in this case we have $E_\omega=\{\omega\}$.
\end{proof}

\subsection{A concrete example on the Dirichlet space}

Our next goal is to analyze a concrete operator algebra of analytic functions through the lens of noncommutative peak point theory. This is predicated  on the following  basic observation.

\begin{proposition}\label{P:compactsQpeak}
Let $H$ be a Hilbert space and let $B\subset B(H)$ be a unital $\rC^*$-subalgebra containing the ideal $\fK$ of compact operators. Let $q:B\to B/\fK$ denote the quotient map. Let $\xi\in H$ be a unit vector, and let $\omega=\langle \cdot \xi,\xi\rangle$ be the corresponding pure vector state on $B$. For $b\in B$ with $\|b\|=1$, the following statements are equivalent.
\begin{enumerate}[{\rm(i)}]
\item $\omega(b^*b)=1>\psi(b^*b)$ for every state $\psi$ on $B$ distinct from $\omega$.
\item $\|q(b)\|<1$ and $\ker (b^*b-I)=\bC \xi$.
\end{enumerate}
\end{proposition}
\begin{proof}
(i)$\Rightarrow$(ii): If $\|q(b)\|=1$, then there is a state $\phi$ on $B/\fK$ such that $(\phi\circ q)(b^*b)=1$. The assumption then forces $\omega=\phi\circ q$, which is absurd as $\omega$ does not annihilate the rank-one projection onto $\bC \xi$. 
Next, we see that
$
\langle b^*b\xi,\xi\rangle=\omega(b^*b)=1
$
so by the Cauchy--Schwarz inequality we find $\bC \xi\subset \ker (b^*b-I)$. Given an arbitrary unit vector $\eta\in \ker(b^*b-I)$, the corresponding pure vector state $\psi=\langle \cdot \eta,\eta\rangle$ on $B$ clearly satisfies $\psi(b^*b)=1$, so that $\psi=\omega$ by assumption, and in turn $\eta\in \bC \xi$.

(ii)$\Rightarrow$(i): By \cite[Lemma 2.1]{CTh2023}, it suffices to fix a \emph{pure} state $\psi$ on $B$ distinct from $\omega$ and show that $\psi(b^*b)<1$. By basic representation theory of $\rC^*$-algebras, either $\psi=\phi\circ q$ for some state $\phi$ on $B/\fK$, or $\psi$ is a vector state. In the first case, we find  $\psi(b^*b)\leq \|q(b)\|^2<1$. Alternatively, if there is a unit vector $\eta\in H$ such that $\psi=\langle \cdot \eta,\eta\rangle$, then using $\psi\neq \omega$ we see that $\eta\notin \bC\xi=\ker(b^*b-I)$, so $\psi(b^*b)<1$.
\end{proof}

\begin{example}\label{E:Dirichlet}
Let $\D$ denote the Dirichlet space on the unit disc $\bD\subset\bC$. This is a classical object of study in complex function theory and harmonic analysis; the reader may consult \cite{EKMR2014} for more detail. We only recall here the salient facts the we require for our analysis. 

The space $\D$ is a reproducing kernel Hilbert space of holomorphic functions on $\bD$, and it contains the constant functions.
Every polynomial $p$ induces a bounded multiplication operator $M_p:\D\to \D$ with the property that $\|M_p\|\geq \sup_{z\in \bD}|p(z)|.$ There are some polynomials for which this inequality is strict (contrary to the case of the Hardy space on $\bD$). 

Let $\rA(\D)\subset B(\D)$ denote the norm-closed unital subalgebra generated by the polynomial multipliers, and let $\fT(\D)=\rC^*(\rA(\D))\subset B(\D)$. The $\rC^*$-algebra $\fT(\D)$ contains the ideal $\fK$ of compact operators on $\D$, and there is a unital surjective $*$-homomorphism $\pi:\fT(\D)\to \rC(\bT)$ with $\ker \pi=\fK$ such that $\pi(M_p)=p$ for every polynomial $p$ (\cite[Example 1 and Theorem 4.6]{GHX04}). Here, $\bT\subset \bC$ denotes the unit circle. We now examine the various notions of peaking introduced above for the $\rC^*$-algebra $\fT(D)$ relative to $\rA(\D)$.

Let $\omega$ be a pure state on $\fT(\D)$. As above, there are two possibilities for $\omega$.

If $\omega$ annihilates $\fK$, then there is a state $\tau$ on $\rC(\bT)$, necessarily pure since $\omega$ is, such that $\omega=\tau\circ \pi$. Then, $\tau$ is given as evaluation at a point in $\bT$. Combining \cite[Proposition 9.2]{DH2023} with \cite[Corollary 5.8]{BC2024}, we see that $\fl_\omega$ is an $\rA(\D)$-peak projection.

The remaining case is  that where there is a unit vector $\xi\in \D$ such that $\omega=\langle \cdot \xi,\xi\rangle$.   Note that if $a\in \rA(\D)$ satisfies $\omega(a)=1=\|a\|$, then by the Cauchy--Schwarz inequality we find $a\xi=\xi$. Since $\D$ consists of holomorphic functions on $\bD$, this forces $a=I$. In particular, we conclude from Proposition \ref{P:peakprojnonorthog} that $\fl_\omega$ is not an $\rA(\D)$-peak projection.   Moreover, $\omega|_{\rA(\D)}$ typically admits several extensions to a state on $\fT(\D)$. For instance, $\langle \cdot 1,1\rangle$ agrees with $\tau_0\circ \pi$ on $\rA(\D)$, where $\tau_0$ is the state of integration against arclength measure on $\bT$. In particular, this shows that $\omega$ is typically not an $\rA(\D)$-state.  Finally, by Proposition \ref{P:compactsQpeak}, we see that $\omega$ is a $\Q(\rA(\D))$-peak state if and only if there is $a\in \rA(\D)$ with $\|a\|=1>\|q(a)\|$ and $\ker(a^*a-I)=\bC \xi$. There are many such elements in $\rA(\D)$, as seen in \cite[Theorem 10.2]{AHMR2023}.\qed
\end{example}

Much of the analysis above can be replicated for a large class of spaces, namely the unitarily invariant complete Pick reproducing kernel Hilbert spaces on the open unit ball \cite{DH2023}. One crucial feature  of $\D$ is that \cite[Theorem 10.2]{AHMR2023} is available, which was required by Proposition \ref{P:compactsQpeak}. The validity of the corresponding statement in the setting of, say, the ubiquitous Drury--Arveson space, was raised as a question in \cite{CD2016duality}, and still does not seem to be known.

\section{Abundance of peak sets}\label{S:abundance}

We start this discussion with a simple observation, already noted in \cite{hay2007}.
Let  $A\subset \bM_2$ be the unital subalgebra of upper-triangular matrices with constant diagonal. Then, it is readily verified that there exist simply no $A$-peak projections in this case. If one is to meaningfully consider peak projections as the noncommutative analogues of peak points, then this dearth is a rather serious impediment, as it conflicts with the classical case of continuous functions. Indeed, peak points are always dense in the set of pure states with pure restrictions in the commutative case \cite[Corollary 8.4]{phelps2001}.  

In this section, we establish a noncommutative analogue of this density fact using the relaxed notions of peaking introduced in Section \ref{S:ncpeak}. To do so, we will rely on an important Banach space geometry result, which we now describe. Let $X$ be a normed space, and denote by $U\subset X$ the unit sphere. Recall that a point $x\in U$ is said to be \emph{smooth} if there is a unique bounded linear functional $\lambda:X\to\bC$ such that $\|\lambda\|=1=\lambda(x)$. A proof of the following may be found in \cite[Theorem 8.3]{phelps2001}, for instance.

\begin{theorem}[Mazur]\label{T:Mazur}
If $X$ is a  separable Banach space, then the smooth points of its unit sphere $U$ form a dense subset of type $G_\delta$ in $U$.
\end{theorem}

Let $B$ be a unital  $\rC^*$-algebra and let $M\subset B$ be a unital  subspace. 
Our first order of business will be to show that, by applying this result to an appropriate Banach space, we can establish that certain special  $M$-peak sets (of the type appearing in Proposition \ref{P:peakset}) are rather abundant. For this purpose, we recall that the numerical radius $\nr(\cdot)$ defines a norm on $M$, equivalent to the original norm coming from the $\rC^*$-algebra structure of $B$. 

\begin{lemma}\label{L:smoothstate}
Let $B$ be a unital  $\rC^*$-algebra and let $M\subset B$ be a unital subspace. Let $U$ denote the unit sphere of $(M,\nr(\cdot))$. Let $x$ be a smooth point of $U$. Then, the following statements hold.
\begin{enumerate}[{\rm (i)}]
\item There is a pure state $\omega$ on $B$ such that $|\omega(x)|=1$ and $\omega|_M$ is pure.
\item   If $\psi$ is a state on $B$ with $|\psi(x)|=1$, then $\omega|_M=\psi|_M$. 
\item $E_\omega$ contains a non-empty $M$-peak set.
\end{enumerate}
\end{lemma}
\begin{proof}
(i) By weak-$*$ compactness, we see that the set $\{\phi\in \S(M):|\phi(x)|=\nr(x)\}$ is non-empty. Further, it is a face of $\S(M)$, so by the Krein--Milman theorem there is a pure state $\omega_0$ on $M$ with $|\omega(x)|=\nr(x)=1$. It thus suffices to choose $\omega$ to be any pure state extension of $\omega_0$.

(ii) Note that both $\omega|_M$ and $\psi|_M$ are norm-one linear functionals on $(M,\nr(\cdot))$. Choose $\alpha,\beta\in \bC$ with $|\alpha|=|\beta|=1$ such that $\alpha\omega(x)=1=\beta\psi(x)$. Since $x$ is smooth, we see that $\alpha \omega|_M=\beta \psi|_M$. Evaluating at  the unit $I\in M$, we conclude that $\alpha=\beta$, whence $\omega|_M=\psi|_M$.

(iii) By (i) and (ii), we have $R^{\nr}_x=\{\omega|_M\}$. Hence by Proposition \ref{P:peakset} we see that $E_\omega$ contains a non-empty $M$-peak set.
\end{proof}

We now extract a sort of dual statement to Theorem \ref{T:Mazur}, which shows that certain special $M$-peak sets are always abundant.

\begin{theorem}\label{T:P0dense}
Let $B$ be a unital  $\rC^*$-algebra and let $M\subset B$ be a unital, separable, norm-closed  subspace. Let $\P_0$  be the set of pure states $\omega$ on $B$ for which there is a smooth point $x$ of the unit sphere of $(M,\nr(\cdot))$ satisfying $|\omega(x)|=1$. Let $\P(M)$ denote the set of pure states $\omega$ on $B$ such that $E_\omega$ contains a non-empty $M$-peak set.  Then, the sets $\{\omega|_M:\omega\in \P_0\}$ and $ \{\omega|_M:\omega\in \P(M)\} $ are both weak-$*$ dense in the pure states on $M$.  
\end{theorem}
\begin{proof}
Note that $\P_0\subset \P(M)$  and $\omega|_M$ is pure for every $\omega\in \P_0$ by Lemma \ref{L:smoothstate}, so it suffices to show that the weak-$*$ closure of $\{\omega|_M:\omega\in \P_0\}$ contains the pure states on $M$.

Let $Y=(M,\nr(\cdot))$, which is a separable Banach space since $\nr(\cdot)$ is a norm on $M$ equivalent to the original norm coming from the $\rC^*$-algebra $B$. Denote by $K$ the closed unit ball of $Y^*$.

We first claim the convex hull of $\{\zeta\omega|_M:\zeta\in \bC, |\zeta|=1,\omega \in \P_0\}$ is weak-$*$ dense in $K$. For otherwise, there is $y\in U$ such that 
\[
\sup_{\omega\in \P_0}|\omega(y)|< 1.
\]
By virtue of Theorem \ref{T:Mazur}, we may assume that $y$ is a smooth point of $U$. In this case, Lemma \ref{L:smoothstate} implies that there is $\omega\in \P_0$ with $|\omega(y)|=1$. This is absurd, and the claim is established. In particular, by Milman's converse \cite[Proposition 1.5]{phelps2001}, we  know that the extreme points of $K$ are contained in the weak-$*$ closure of $\{\zeta\omega|_M:\zeta\in \bC, |\zeta|=1,\omega \in \P_0\}$. 

Next, let $\phi$ be a state on $M$, and assume that $\phi=\frac{1}{2}(\lambda_1+\lambda_2)$ for $\lambda_1,\lambda_2\in K$. Then, using that $|\lambda_i(I)|\leq \nr(I)=1$ and
\[
1=\phi(I)=\frac{1}{2}(\re \lambda_1(I)+\re\lambda_2(I))\leq 1
\]
we see that $\lambda_1(I)=\lambda_2(I)=1$, so $\lambda_1$ and $\lambda_2$ are in fact states on $M$. This shows that $\S(M)$ is a weak-$*$ closed face of $K$, so that the pure states on $M$ are extreme points of $K$. Hence, the pure states on $M$ are contained in the weak-$*$ closure of $\{\zeta\omega|_M:\zeta\in \bC, |\zeta|=1,\omega \in \P_0\}$, by the previous paragraph. Since states are unital maps, the pure states on $M$ must in fact be contained in the weak-$*$ closure of $\{\omega|_M:\omega\in \P_0\}$. 
\end{proof}

Under an additional assumption $M$, we can in fact replace $M$-peak sets with $M$-peak states. 

\begin{corollary}\label{C:PMdense}
Let $B$ be a unital  $\rC^*$-algebra and let $M\subset B$ be a unital , separable, norm-closed  subspace.  Assume that $M$ has the pure extension property in $B$. Then, the $M$-peak states are weak-$*$ dense in the pure states on $B$ that restrict to be pure on $M$.
\end{corollary}
\begin{proof}
Because of the pure extension property, we note that $\P(M)$ consists of $M$-peak states. Consider the restriction map $\rho:\S(B)\to \S(M)$, and let $E\subset \S(B)$ denote the set of pure states on $B$ that restrict to be pure on $M$. In other words, $E$ is the preimage under $\rho$ of the pure states on $M$. Since every pure state on $M$ admits a unique extension to a pure state on $B$ by assumption, the map $\rho$ implements a weak-$*$ homeomorphism between $E$ and the pure states on $M$. By Theorem \ref{T:P0dense}, we see that $\rho(\P(M))$ is weak-$*$ dense in $\rho(E)$, so $\P(M)$ is in fact weak-$*$ dense in $E$.
\end{proof}

Note that if $B$ above is commutative and $B=\rC^*(M)$, then $M$ necessarily has the pure extension property, and the previous result recovers the classical fact that the peak points are dense in the Choquet boundary of $M$ \cite[Corollary 8.4]{phelps2001}.

Our next objective is to obtain an analogous abundance result for $\Q(M)$-peak sets. For this purpose, we require an adaptation of Theorem \ref{T:Mazur}. The key technical step is the following;  our argument  is closely modelled on the proof of \cite[Proposition 8.3]{phelps2001}. We use the same notation as in Proposition \ref{P:peakset}.

\begin{lemma}\label{L:dense}
Let $B$ be a  unital separable $\rC^*$-algebra and let $M\subset B$ be a norm-closed subspace.  Fix $u\in M$ with $\|u\|=1$ and $\eps>0$.
Then, the set
\[
D=\{x\in M: \|x\|=1, \re (\phi(u^*x)-\psi(u^* x))<\eps \text{ for every } \phi,\psi\in N^\Q_x\}
\]
is open and dense in the unit sphere of $M$.
\end{lemma}
\begin{proof}
First, observe that since $B$ is assumed to be separable, the weak-$*$ topology on the state space $\S(B)$ is metrizable. This will be used implicitly throughout.

Denote by $U$ the unit sphere of $M$.
Let $(y_n)$ be a sequence in $U\setminus D$ which converges in norm to some $y\in U$. Thus, for each integer $n\geq 1$  there are states $\phi_n,\psi_n\in N^\Q_{y_n}$ such that 
\[
\re (\phi_n(u^*y_n)-\psi_n(u^*y_n))\geq \eps.
\]
Let $\phi,\psi\in \S(B)$ be weak-$*$ cluster points of the sequences $(\phi_n)$ and $(\psi_n)$ respectively. Since $(y_n)$ converges to $y$ in norm,  it is easily verified that $\phi,\psi\in N^\Q_y$ and
$
\re(\phi-\psi)(u^*y)\geq \eps.
$
We conclude that $y\in U\setminus D$, whence $D$ is open.

Assume next for the sake of contradiction that $D$ is not dense in $U$, so that there is $x\in U\setminus D$ along with $\delta>0$ such that $y\in U\setminus D$ whenever $y\in U$ satisfies $\|x-y\|<\delta$. We put $x_1=x$ and  choose $\phi_1,\psi_1\in N^\Q_{x_1}$ such that
\[
\re (\phi_1(u^*x_1)-\psi_1(u^* x_1))\geq \eps.
\]
Suppose that, given $n\geq 1$, we have constructed $x_n\in U\setminus D$ such that 
$
\|x_1-x_n\|<\left( 1-\frac{1}{2^n}\right)\delta
$
along with $\phi_n,\psi_n\in N^\Q_{x_n}$ such that
\[
\re(\phi_n(u^*x_n)-\psi_n(u^* x_n))\geq \eps.
\]
Since both $x_n$ and $u$ have norm $1$, we may choose $\alpha_n>0$ small enough so that $x_n+\alpha_n u\neq 0$,
\begin{equation}\label{Eq:alpha}
\alpha_n (1- \phi_n(u^* u))<\eps
\end{equation}
and
\begin{equation}\label{Eq:alphanorm}
\frac{\|(1-\|x_n+\alpha_n u\|)x_n+\alpha_n u\|}{\|x_n+\alpha_n u\|}<\frac{\delta}{2^{n+1}}.
\end{equation}
Define
\[
x_{n+1}=\frac{1}{\|x_n+\alpha_n u\|}(x_n+\alpha_n u)\in U.
\]
Using \eqref{Eq:alphanorm}, we obtain
\begin{equation}\label{Eq:Cauchy}
\|x_n-x_{n+1}\| =\frac{\|(1-\|x_n+\alpha_n u\|)x_n+\alpha_n u\|}{\|x_n+\alpha_n u\|}<\frac{\delta}{2^{n+1}}
\end{equation}
and thus
\begin{align*}
\|x_1-x_{n+1}\|&\leq \|x_1-x_n\|+\|x_n-x_{n+1}\|< \left( 1-\frac{1}{2^n}\right)\delta+\frac{\delta}{2^{n+1}}\\
&= \left( 1-\frac{1}{2^{n+1}}\right)\delta<\delta
\end{align*}
so we see that $x_{n+1}\in U\setminus D$ by choice of $\delta$. In particular, we may choose $\phi_{n+1},\psi_{n+1}\in N^\Q_{x_{n+1}}$ such that
\[
\re(\phi_{n+1}(u^*x_{n+1})-\psi_{n+1}(u^*x_{n+1}))\geq \eps.
\]

By induction, we obtain a sequence $(x_n)$ in $U\setminus D$, a sequence $(\alpha_n)$ of positive numbers, and sequences $(\phi_n),(\psi_n)$ of states on $B$ such that
\begin{equation}\label{Eq:xn}
x_{n+1}=\frac{1}{\|x_n+\alpha_n  u\|}(x_n+\alpha_n u),
\end{equation}
\begin{equation}\label{Eq:phipsi}
\phi_n(x_n^*x_n)=\psi_n(x_n^*x_n)=1
\end{equation}
and
\begin{equation}\label{Eq:Re}
\re(\phi_n(u^*x_n)-\psi_n(u^* x_n))\geq \eps
\end{equation}
for every $n\geq 1$.
Using \eqref{Eq:xn} and \eqref{Eq:phipsi}, we observe that
\begin{align*}
1&\geq \re \phi_n(x_{n+1}^*x_{n+1})\\
&=\frac{1}{\|x_n+\alpha_n u\|^2}(\phi_n(x_n^*x_n)+2\alpha_n\re \phi_n (u^* x_n)+\alpha_n^2 \phi_n(u^* u))\\
&=\frac{1}{\|x_n+\alpha_n u\|^2}(1+2\alpha_n\re \phi_n(u^* x_n)+\alpha_n^2 \phi_n(u^* u))
\end{align*}
and
\begin{align*}
1&= \re \psi_{n+1}(x_{n+1}^*x_{n+1})\\
&=\frac{1}{\|x_n+\alpha_n u\|^2}(\psi_{n+1}(x_n^*x_n)+2\alpha_n\re \psi_{n+1} (u^* x_n)+\alpha_n^2\psi_{n+1}(u^* u))\\
&\leq \frac{1}{\|x_n+\alpha_n u\|^2}(1+2\alpha_n\re \psi_{n+1} (u^* x_n)+\alpha_n^2 \psi_{n+1}(u^* u)).
\end{align*}
Combining these inequalities we infer
\begin{align*}
2\re \phi_n(u^* x_n)+\alpha_n \phi_n(u^* u) &\leq 2\re\psi_{n+1}  (u^* x_n)+\alpha_n \psi_{n+1}(u^* u)\\
&\leq  2\re\psi_{n+1}  (u^* x_n)+\alpha_n 
\end{align*}
and therefore
\begin{equation}\label{Eq:psi}
\re( \phi_n(u^* x_n)- \psi_{n+1}  (u^* x_n))\leq\frac{1}{2} \alpha_n (1- \phi_n(u^* u))<\eps/2
\end{equation}
by virtue of \eqref{Eq:alpha}. 

Note now that \eqref{Eq:Cauchy} implies that the sequence $(x_n)$ in Cauchy, so it converges in norm to some $z\in U$. Choose $N$ large enough so that $\|x_n-z\|<\eps/16$ for each $n\geq N$. Then, \eqref{Eq:Re} and \eqref{Eq:psi} easily imply that
\[
\re\phi_n(u^*z)\geq \re \psi_n(u^* z)+7\eps/8 \qand  \re \psi_{n+1}  (u^* z)> \re \phi_n(u^* z)-5\eps/8
\]
for each $n\geq N$. Given any integer $m\geq 1$, we thus find
\begin{align*}
1\geq \re \phi_{N+m}(u^*z)&\geq \re \psi_{N+m}(u^* z)+7\eps/8> \re \phi_{N+m-1}(u^*z)+\eps/4\\
&\geq \ldots \geq \re \phi_{N}(u^*z)+m\eps/4
\end{align*}
which is absurd.
\end{proof}

We now obtain our desired analogue of Mazur's theorem.

\begin{theorem}\label{T:Qsmoothdense}
Let $B$ be a unital separable $\rC^*$-algebra and let $A\subset B$ be a unital  norm-closed subalgebra. Let $A_0\subset A$ be the subset of elements $x$ with $\|x\|=1$ for which the set $R^\Q_x$ is a singleton. Then, $A_0$ contains a $G_\delta$ dense subset of the unit sphere of $A$.
\end{theorem}
\begin{proof}
Denote by $U$ the unit sphere of $A$. 
Since $A$ is separable, so is $U$, and there is a countable subset $\{u_n:n\geq 1\}$ which is dense in $U$. For each $n,m \geq 1$, we set
\[
D_{nm}=\{x\in U:  \re (\phi(u_n^* x)-\psi(u_n^* x))<1/m \text{ for every } \phi,\psi\in N^\Q_x\}.
\]
Let $x\in \bigcap_{n,m=1}^\infty D_{nm}$ and let $\phi,\psi\in N^\Q_x$. Then, $ \re \phi(u^*_n x)=\re \psi(u^*_nx)$   for every $n\geq 1$. By density, this means that $\re\phi(u^*x)=\re\psi(u^*x)$ for every $u\in U$. For each $u\in U$, using that $A$ is a subalgebra we have $xu\in U$ so  $\re \phi(u^*x^*x )=\re\psi(u^*x^* x)$. Observe now that $x^*x$ belongs to the multiplicative domain of $\phi$ and $\psi$ since $\phi,\psi\in N^\Q_x$, so  $\re \phi(u^*)=\re \psi(u^*)$ for every $u\in U$. We conclude that $\re\phi=\re \psi$ on $A$, and hence $\phi=\psi$ on $A$.
In other words,  $x\in A_0$. This shows that $A_0$ contains $\bigcap_{n,m=1}^\infty D_{nm}.$ 
Using Lemma \ref{L:dense} along with  the Baire category theorem, we see that $\bigcap_{n,m=1}^\infty D_{nm}$ is a $G_\delta$ dense subset of $U$, and the proof is complete.
\end{proof}

Although our argument above requires $A$ to be an algebra to conclude that $xu\in U$, it is conceivable that the result itself could hold for unital norm-closed subspaces as well, but we have not been able to adapt the argument to this more general setting.

We now obtain a result illustrating that $\Q(A)$-peak sets are always somewhat plentiful.

\begin{corollary}\label{C:Qpeaknorm}
Let $B$ be a unital separable $\rC^*$-algebra and let $A\subset B$ be a unital norm-closed subalgebra. Let $\P(\Q(A))$ denote the set of pure states $\omega$ on $B$ such that $E_\omega$ contains a non-empty $\Q(A)$-peak set. Then, 
$
\sup\{ \omega(a^*a):\omega\in \P(\Q(A))\}=\|a\|^2
$
for every $a\in A$.
\end{corollary}
\begin{proof}
Let $A_0\subset A$ be defined as in Theorem \ref{T:Qsmoothdense}. For each $x\in A_0$, there always is a pure state $\psi$ on $B$ lying in $N^\Q_x$. By definition of $A_0$, we find $R^\Q_x=\{\psi|_M\}$.  Proposition \ref{P:peakset} implies that $E_\psi$ contains a non-empty $\Q(A)$-peak set, so $\psi\in\P(\Q(A))$ which implies
\[
\sup_{\omega\in \P(\Q(A))}\omega(x^*x)=\|x\|^2.
\]
Since $A_0$ is norm-dense in the unit sphere of $A$ by virtue of Theorem \ref{T:Qsmoothdense}, we conclude that
\[
\sup_{\omega\in \P(\Q(A))}\omega(a^*a)=\|a\|^2, \quad a\in A.
\]
\end{proof}

\subsection{From peak sets to peak states in matrix algebras}

At the time of this writing, we do not know if the statement of Corollary \ref{C:Qpeaknorm} can generally be improved by replacing peak sets with peak states.  We close this section by exhibiting instances of small matrix algebras where this can be done. The key observation is the following.

\begin{lemma}\label{L:E}
Let $n\geq 1$ and  let $M\subset \bM_n$ be a unital subspace. 
Let $x\in M$ with $\|x\|=1$ such that $R^\Q_x$ is a singleton while $N^\Q_x$ contains two distinct pure states $\phi$ and $\psi$. Then, the following statements hold.
\begin{enumerate}[{\rm (i)}]
\item There is a two dimensional subspace $V\subset \bC^n$ with the property that $P_V a|_V=\phi(a) I$ for each $a\in M$.

\item If we put $z=P_{V^\perp}x|_V$, then  $z^*z=(1-|\phi(x)|^2)I$.
\end{enumerate}
\end{lemma}
\begin{proof}
The pure states $\phi$ and $\psi$ are vector states. Thus, there exist two linearly independent unit vectors $\xi,\eta\in \bC^n$ such that
\[
\phi(b)=\langle b\xi,\xi\rangle \qand \psi(b)=\langle b\eta,\eta\rangle
\]
for every $b\in \bM_n$. Let $V\subset \bC^n$ be the subspace spanned by $\xi$ and $\eta$. The assumption that $\phi,\psi\in N^\Q_x$ means that
$
1=\|x\|^2=\phi(x^*x)=\psi(x^*x).
$
When combined with the Cauchy--Schwarz inequality, this implies that $V\subset \ker( x^*x-I)$. In particular, if $v\in V$ is a unit vector then the state $\theta$ on $\bM_n$ defined as
\[
\theta(b)=\langle bv,v\rangle, \quad b\in \bM_n
\]
satisfies $\theta(x^*x)=1$ or $\theta\in N^\Q_x$. Since $R^\Q_x$ is a singleton, this means that $\theta$ and $\phi$ agree on $M$, so
\[
\langle av,v \rangle=\phi(a), \quad a\in M.
\]
Because $v\in V$ is arbitrary, we infer that 
\[
P_V a|_V=\phi(a) I, \quad a\in M
\]
which is (i).  Finally, put $z=P_{V^\perp}x|_V$. We compute using (i) that
\begin{align*}
I=P_{V}x^*x|_V&=P_V x^* P_V x |_V+P_V x^*P_{V^\perp} x|V\\
&=|\phi(x)|^2 I+z^*z
\end{align*}
so (ii) is established.
\end{proof}

In some case, we can now replace $\Q(M)$-peak sets by $\Q(M)$-peak states.

\begin{proposition}\label{P:M23}
Let $n\geq 1$ and  let $M\subset \bM_n$ be a unital subspace. Put $B=\rC^*(M)$.  Assume that either
\begin{enumerate}[{\rm (a)}]
\item $n\leq 2$, or
\item $n=3$ and the only normal matrices in $M$ are scalar multiples of the identity. 
\end{enumerate} 
Let $x\in M$ with $\|x\|=1$ such that $R^\Q_x$ is a singleton and let $\omega\in N^\Q_x$. Then, $\omega$ is a $\Q(M)$-peak state.
\end{proposition}
\begin{proof}
If $M=\bC I$, then $B=\rC^*(M)=\bC I$ and the claim is trivial. Hence, we assume throughout that  $n\geq 1$ and  that $\bC I$ is properly contained in $M$.

 To show that $\omega$ is a $\Q(M)$-peak state, it suffices to fix another pure state $\psi$ on $B$ such that $\psi(x^*x)=1$, and to establish that $\psi=\omega$ \cite[Lemma 2.1]{CTh2023}.

If $\omega$ and $\psi$ are distinct, then we may apply Lemma \ref{L:E} to find a two dimensional subspace $V\subset \bC^n$ such that \[
P_V a|_V=\omega(a)I, \quad a\in M
\]
and $z^*z=(1-|\omega(x)|^2)I$ for $z=P_{V^{\perp}}x|_V$.

If $n=2$, then $V=\bC^n$ and $a=\omega(a)I$ for every $a\in M$, contrary to our standing assumption that $\bC I\neq M$. 

Next, assume $n=3$ and that the only normal matrices in $M$ are scalar multiples of the identity. 
In this case, $\dim V^\perp=1<\dim V$, so $z^*z$ cannot be invertible. This forces $|\omega(x)|=1$, so that $P_V x|_V$ is a unimodular scalar multiple of the identity on $V$. Because $x$ has norm $1$, we infer that $V$ is a reducing subspace for $x$. Therefore, $x$ is a diagonal matrix with respect to the decomposition $\bC^n=V\oplus V^\perp$, and hence it is normal. By assumption, we must have that $x\in \bC I$. It follows that $\S(B)=N^\Q_x$, so $\{\omega|_M\}=R^\Q_x=\S(M)$, which forces $M=\bC I$. Once again, this contradicts our standing assumption.

In both cases, we thus conclude that $\psi=\omega$, so that indeed $\omega$ is a $\Q(M)$-peak state.
\end{proof}

We now arrive at our desired result.

\begin{corollary}\label{C:M23dense}
Let $n\geq 1$ and  let $A\subset \bM_n$ be a unital subalgebra.  Assume that either
\begin{enumerate}[{\rm (a)}]
\item $n\leq 2$, or
\item $n=3$ and the only normal matrices in $A$ are scalar multiples of the identity. 
\end{enumerate} 
Then, 
\[
\sup\{  \omega(a^*a):\omega \text{ is a } \Q(A)\text{-peak state on } B\}=\|a\|^2
\]
for every $a\in A$.
\end{corollary}
\begin{proof}
By Theorem \ref{T:Qsmoothdense},  it suffices to prove that for a fixed $a\in A_0$, there is a $\Q(A)$-peak state $\omega$ with $\omega(a^*a)=1$. For such an element $a$, there always exists  a pure state $\omega$ on $B$ with $\omega(a^*a)=1$. But then Proposition \ref{P:M23} implies that $\omega$ is a $\Q(A)$-peak state, as desired.
\end{proof}

Of course, this result is of limited applicability, given how restrictive the assumptions are.  One non-trivial case where it is applicable  is the subalgebra of upper triangular Toeplitz matrices in $\bM_3$.

Finally, it is natural to wonder if Theorem \ref{T:P0dense} can be similarly improved to replace peak sets by peak points for small matrix algebras. We do not know the answer, but we mention that the results \cite{LLS2014} appear relevant in this regard.

\section{Unique state extensions for peak states}\label{S:peakuep}

We return to our main question, which we recall for convenience.

\vspace{3mm}

\textbf{Main question.} Let $B$ be a unital $\rC^*$-algebra and let $M\subset B$ be a subspace. Let $\omega$ be a pure state on $B$ and let $E_\omega$ denote the set of states on $B$ extending $\omega|_M$. When does $E_\omega$ reduce to the singleton $\{\omega\}$?
\vspace{3mm}

In this section, we thoroughly examine whether the noncommutative analogues of peak points considered hitherto provide an answer.

First, we note that if $\omega$ is an $M$-peak state, then it follows immediately from the definition that $\omega$ is the unique state on $B$ extending $\omega|_M$. Thus, we have the following implication.
\begin{equation}\label{Eq:imp2}
\begin{tikzcd}[arrows=Rightarrow]
\substack{\omega \text{  is}\\ M\text{-peak state}} \arrow[r] &  \substack{E_\omega=\{\omega\}}
\end{tikzcd}
\end{equation}
In some simple situations, we have a converse.

\begin{proposition}\label{P:ueppeaksingle}
Let $B$ be a unital $\rC^*$-algebra. Let $x\in B$ and put $M=\spn\{I,x\}\subset B$. Let $\omega$ be a state on $B$ such that $\omega(x)=\nr(x)$.
Then, the following statements are equivalent.
\begin{enumerate}[{\rm (i)}]
\item $\omega$ is an $M$-peak state.
\item $\omega|_M$ admits a unique extension to a state on $B$.
\end{enumerate}
\end{proposition}
\begin{proof}
(i)$\Rightarrow$(ii): This always holds, as mentioned before the proposition.

(ii)$\Rightarrow$(i): When $x=0$, we have $M=\bC I$ so the assumption implies that there is a unique state on $B$, which is then automatically an $M$-peak state. Assume henceforth that $x\neq 0$. Upon normalizing $x$ if necessary, we may also suppose that $\nr(x)=1$. By assumption, we see that
$\{\phi\in \S(B):\phi(x)=1\}=\{\omega\}$. We then conclude from Lemma \ref{L:halftrick} that indeed $\omega$ is an $M$-peak state.
\end{proof}

Generally speaking however, the converse of \eqref{Eq:imp2} fails, even if $M$ is a unital norm-closed subalgebra of $B$. Our next goal is to construct an example that illustrates this failure.

Let $B$ be a unital $\rC^*$-algebra and let $M\subset B$ be a  subspace.  Define $\A(M)\subset \bM_2(B)$ to be the unital subalgebra consisting of elements of the form $\begin{bmatrix}
\lambda & x\\ 0 & \lambda
\end{bmatrix}$ for $\lambda\in \bC$ and $x\in M$. Further, given a state $\omega$ on $B$, we define a state $\Phi_\omega$ on $\bM_2(B)$ as
\[
\Phi_\omega([b_{ij}])=\langle [\omega(b_{ij})]v,v\rangle_{\bC^2}, \quad [b_{ij}]\in \bM_2(B)
\]
where $v=\frac{1}{2}(1,1)\in \bC^2$. In other words,
\[
\Phi_\omega([b_{ij}])=\frac{1}{2}(\omega(b_{11})+\omega(b_{12})+\omega(b_{21})+\omega(b_{22}))
\]
for each $[b_{ij}]\in \bM_2(B)$. This last expression implies  in particular that, given another state $\omega'$ on $B$, $\Phi_{\omega}=\Phi_{\omega'}$ holds precisely when  $\omega=\omega'$.

\begin{theorem}\label{T:uepcorner}
Let $B$ be a unital $\rC^*$-algebra and let $M\subset B$ be a unital subspace. Let $\omega$ be a state on $B$. Consider the following statements.
\begin{enumerate}[{\rm (i)}]
\item $\omega|_M$ admits a unique extension to a state on $B$.
\item $\Phi_\omega|_{\A(M)}$ admits a unique extension to a state on $\bM_2(B)$.
\item $\omega$ is an $M$-peak state.
\item $\Phi_\omega$ is an $\A(M)$-peak state. 
\end{enumerate}
Then, we have ${\rm (i)} \Leftrightarrow {\rm (ii)}$ and ${\rm (iv)} \Rightarrow {\rm (iii)}$. 
If $B$ is commutative, then ${\rm (iii)} \Leftrightarrow {\rm (iv)}$.
\end{theorem}
\begin{proof}
(ii)$\Rightarrow$(i): Let $\psi$ be a state on $B$ extending $\omega|_M$. Then, it follows that $\Phi_\psi=\Phi_\omega$ on $\A(M)$, so by assumption $\Phi_\psi=\Phi_\omega$, and in turn this implies $\psi=\omega$ as observed before the theorem.

(i)$\Rightarrow$(ii): Let $\Psi$ be a state on $\bM_2(B)$ extending $\Phi_\omega|_{\A(M)}$. Define a linear map $\psi:B\to\bC$ as
\[
\psi(b)=\Psi\left(\begin{bmatrix}
0 & 2b \\ 0 & 0
\end{bmatrix} \right), \quad b\in B.
\]
We claim that $\psi$ is a state on $B$. First, we see that
\begin{align*}
\psi(I)&=\Psi\left(\begin{bmatrix}
0 & 2I \\ 0 & 0
\end{bmatrix} \right)=\Phi_\omega\left(\begin{bmatrix}
0 & 2I \\ 0 & 0
\end{bmatrix} \right)=1.
\end{align*}
Next, to see that $\psi$ is contractive, we let $(\sigma,H,\xi)$ denote the GNS representation of $\Psi$, so that $\sigma:\bM_2(B)\to B(H)$ is a unital $*$-representation with unit cyclic vector $\xi\in H$ satisfying
\[
\Psi([b_{ij}])=\langle \sigma([b_{ij}])\xi,\xi\rangle , \quad [b_{ij}]\in \bM_2(B).
\]
As is well-known, up to unitary equivalence there must be a unital $*$-representation $\pi:B\to B(K)$ such that $H=K\oplus K$ and 
\begin{align*}
\sigma([b_{ij}])=[\pi(b_{ij})], \quad [b_{ij}]\in \bM_2(B).
\end{align*}
Hence
\begin{equation}\label{Eq:GNSPhi}
\Psi([b_{ij}])=\langle ([\pi(b_{ij})])\xi,\xi\rangle , \quad [b_{ij}]\in \bM_2(B).
\end{equation}
According to the decomposition $H=K\oplus K$, we write $\xi=(\eta,\zeta)$ for some vectors $\eta,\zeta\in K$ satisfying $1=\|\eta\|^2+\|\zeta\|^2$. For $b\in B$ we find using \eqref{Eq:GNSPhi} that
\begin{equation}\label{Eq:phiWitt}
\psi(b)=\Psi\left(\begin{bmatrix}
0 & 2b \\ 0 & 0
\end{bmatrix} \right)=\left\langle \begin{bmatrix}
0 & 2\pi(b) \\ 0 & 0
\end{bmatrix}\xi,\xi\right\rangle=2\langle \pi(b)\zeta,\eta\rangle
\end{equation}
so 
\begin{align*}
|\psi(b)|&\leq 2\|\eta\| \|\zeta\| \|b\|\leq (\|\eta\|^2+\|\zeta\|^2)\|b\|=\|b\|.
\end{align*}
This shows that $\psi$ is contractive, and so indeed it is a state. Note also that 
$
1=\psi(I)=2\langle \zeta,\eta\rangle$, so by the Cauchy--Schwarz inequality this implies
\[
1\leq 2\|\zeta\| \|\eta\|\leq  (\|\eta\|^2+\|\zeta\|^2)=1.
\]
Equality must hold throughout, which in turns forces $\eta=\zeta$.

Next, for $x\in M$  we have that  $\begin{bmatrix}
0 & 2x \\ 0 & 0
\end{bmatrix}\in \A(M)$ so that, using that $\Psi$ agrees with $\Phi_\omega$ on $\A(M)$, we find
\[
\psi(x)=\Psi\left(\begin{bmatrix}
0 & 2x \\ 0 & 0
\end{bmatrix} \right)=\Phi_\omega\left(\begin{bmatrix}
0 & 2x \\ 0 & 0
\end{bmatrix} \right)=\omega(x).
\]
We thus conclude that $\psi$ is a state on $B$ extending $\omega|_M$, so by assumption we must have $\omega=\psi$. Finally, for $[b_{ij}]\in \bM_2(B)$ we compute using \eqref{Eq:GNSPhi} and \eqref{Eq:phiWitt} and the fact that $\eta=\zeta$ that
\begin{align*}
\Psi([b_{ij}])&=\langle([\pi(b_{ij})])\xi,\xi\rangle\\
%&=\langle \pi(b_{11})\eta,\eta \rangle+\langle \pi(b_{12})\zeta,\eta \rangle+\langle \pi(b_{21})\eta,\zeta %\rangle+\langle \pi(b_{22})\zeta,\zeta \rangle\\
&=\langle \pi(b_{11})\zeta,\eta \rangle+\langle \pi(b_{12})\zeta,\eta \rangle+\langle \pi(b_{21})\zeta,\eta \rangle+\langle \pi(b_{22})\zeta,\eta \rangle\\
&=\frac{1}{2}\psi(b_{11}+b_{12}+b_{21}+b_{22})=\frac{1}{2}\omega(b_{11}+b_{12}+b_{21}+b_{22})=\Phi_\omega([b_{ij}]).
\end{align*}
Therefore, $\Phi_\omega=\Psi$ as desired.

(iv)$\Rightarrow$(iii):  By assumption there is $a=\begin{bmatrix}
\lambda & x \\ 0 & \lambda
\end{bmatrix}\in \A(M)$ with $\lambda\in \bC$ and $x\in M$ such that 
\[
\lambda+\frac{\omega(x)}{2}=\Phi_\omega(a)>|\Phi_\psi(a)|=\left|\lambda+\frac{\psi(x)}{2}\right|
\] for each state $\psi$ on $B$ distinct from $\omega$. Choosing $y=\lambda I +\frac{1}{2}x\in M$, we find $\omega(y)>|\psi(y)|$ for each state $\psi$ on $B$ distinct from $\omega$, so that $\omega$ is an $M$-peak state.

(iii)$\Rightarrow$(iv):  We assume here that $B$ is commutative, and that there is $x\in M$ such that $\omega(x)>|\psi(x)|$ for every state $\psi$ distinct from $\omega$.  Consider $a=\begin{bmatrix}
1 & x \\0 & 1 
\end{bmatrix}\in \A(M)$. 

We first claim that $\Phi_\omega(a)>|\theta(a)|$ for every pure state $\theta$ on $\bM_2(B)$ such that $\theta\neq \Phi_\omega$. Indeed, note that $\bM_2(B)\cong B\otimes_{\min} \bM_2$ \cite[Proposition 12.5]{paulsen2002}, so by \cite[Theorem IV.4.14]{takesaki2002}, we find a pure state $\chi$ on $B$ along with a unit vector $\xi=(\xi_1,\xi_2)\in \bC^2$ such that
\[
\theta([b_{ij}])=\langle [\chi(b_{ij})]\xi,\xi \rangle_{\bC^2}, \quad [b_{ij}]\in \bM_2(B).
\] 
It follows that 
$
\theta(a)=1+\chi(x)\xi_2\ol{\xi_1}
$
whence
\[
\Phi_\omega(a)=1+\frac{1}{2}\omega(x)\geq  1+|\chi(x)| |\xi_1| |\xi_2|\geq |\theta(a)|.
\]
Equality above implies $|\chi(x)|=\omega(x)$, $|\xi_1|=|\xi_2|=\frac{1}{\sqrt{2}}$ and 
\[
|1+\chi(x)\xi_2\ol{\xi_1}|=1+|\chi(x)||\xi_1||\xi_2|.
\]
In turn, this implies $\chi=\omega$, and $\xi_2\ol{\xi_1}\geq 0$, so that $\xi$ is a unimodular scalar multiple of $\frac{1}{\sqrt{2}}(1,1)$,  which then implies $\theta=\Phi_\omega$. This establishes the claim.

Finally, note that the weak-$*$ continuous convex function
$
\Psi\mapsto |\Psi(a)|
$ 
defined on the state space of $\bM_2(B)$ attains its maximum value on the set of pure states, by Bauer's maximum principle \cite[Theorem I.5.3]{alfsen1971}. Hence, $\nr(a)=\Phi_\omega(a)$ by the previous paragraph. The previous paragraph also implies that the weak-$*$ closed face $\{\Psi\in \S(\bM_2(B)):|\Psi(a)|=\nr(a)\}$ has a unique extreme point, and hence must reduce to $\Phi_\omega$ by the Krein---Milman theorem. In other words, $\Phi_\omega$ is an $\A(M)$-peak state.
\end{proof}

\begin{example}\label{E:uepnotpeak}
By \cite[page 43]{phelps2001}, there is a unital, separable, commutative $\rC^*$-algebra $B$, a unital norm-closed subspace $M\subset B$ with $\rC^*(M)=B$, along with a pure state $\omega$ on $B$ for which $\omega|_M$ admits a unique extension to a state on $B$, yet $\omega$ is not an $M$-peak state. Consider the unital norm-closed subalgebra $\A(M)\subset \bM_2(B)$ as above. Then, the state $\Phi_\omega$ on $\bM_2(B)$ is pure (\cite[Theorem IV.4.14]{takesaki2002}). Moreover, by Theorem \ref{T:uepcorner},  the restriction $\Phi_\omega|_{\A(M)}$ admits a unique extension to a state on $\bM_2(B)$ yet $\Phi_\omega$ is not an $\A(M)$-peak state.
\qed
\end{example}
This example shows the following.

\begin{equation}\label{Eq:imp3}
\begin{tikzcd}[arrows=Rightarrow]
\substack{\omega \text{  is}\\ A\text{-peak state}}  &  \substack{E_\omega=\{\omega\}}\arrow[l,"/"{anchor=center,sloped}]
\end{tikzcd}
\end{equation}

As observed in \cite[Example 3.2]{CTh2023}, generally speaking we have that
\begin{equation}\label{Eq:imp4}
\begin{tikzcd}[arrows=Rightarrow]
\substack{E_\omega=\{\omega\}}&  \substack{\omega \text{  is}\\ \Q(A)\text{-peak state}}  \arrow[l,"/"{anchor=center,sloped}] 
\end{tikzcd}
\end{equation}

Our next example will show that
\begin{equation}\label{Eq:imp5}
\begin{tikzcd}[arrows=Rightarrow]
\substack{\omega \text{  is}\\ A\text{-peak state}} \arrow[r,"/"{anchor=center,sloped}] &  \substack{\omega \text{  is}\\ \Q(A)\text{-peak state}} 
\end{tikzcd}
\end{equation}

\begin{example}\label{E:ueppeak2}
Consider the unital subalgebra $A\subset \bM_2$ of upper triangular matrices with constant diagonal.   Let $\xi=\frac{1}{\sqrt{2}}(1,1)\in \bC^2$. Let $\omega$ be the pure state on $\bM_2$ defined as
\[
\omega(t)=\langle t\xi,\xi\rangle, \quad t\in \bM_2.
\]
Let $M=B=\bC$, and let  $\eps$ denote the unique state on $\bC$, which is trivially an $M$-peak state and a $\Q(M)$-peak state. Then, $A=\A(\bC)$ and  $\omega=\Phi_\eps$, so $\omega$ is an $A$-peak state by Theorem \ref{T:uepcorner}.

On the other hand, we claim that $\omega$ is not a $\Q(A)$-peak state. Indeed, assume that there is $a=\begin{bmatrix} \lambda & \mu \\ 0 & \lambda\end{bmatrix}\in A$ such that $\omega(a^*a)>\phi(a^*a)$ for every state $\phi$ on $\bM_2$ distinct from $\omega$. Upon scaling $a$ if necessary, we may assume that $1=\|a\|=\omega(a^*a)$, so that  $a^*a\xi=\xi$. Since 
\[
a^*a=\begin{bmatrix}
|\lambda|^2 & \ol{\lambda}\mu\\ 
\ol{\mu}\lambda & |\lambda|^2+|\mu|^2
\end{bmatrix}
\]
the equality $a^*a\xi=\xi$ is equivalent to
\[
|\lambda|^2+\ol{\lambda}\mu=1=\ol{\mu}\lambda+ |\lambda|^2+|\mu|^2
\]
which implies $2i\im(\ol{\lambda}\mu)=|\mu|^2$. This clearly forces $\mu=0$,  in which case $a=\lambda I$ with $|\lambda|=1$, so that $\phi(a^*a)=1$ for every state $\phi$ on $\bM_2$.
\qed
\end{example}

\subsubsection{Summary of implications}
We close this section by summarizing the connections between the various notions currently at play.

Assume first that $B$ is a unital commutative $\rC^*$-algebra and $M\subset B$ is a unital subspace. Let $\omega$ be a pure state on $B$. The key point is that, because $B$ is commutative, $\omega$ is automatically multiplicative. Then, we have the following diagram of implications.

\[
\begin{tikzcd}[arrows=Rightarrow]
 \substack{\fl_\omega\text{ is}\\ M\text{-peak projection}} \arrow[r,Leftrightarrow] &   \substack{\omega \text{  is}\\ M\text{-peak state}} \arrow[d, shift left=1.5] \arrow[r,Leftrightarrow] & \substack{\omega \text{  is}\\ \Q(M)\text{-peak state}}\\
 & \substack{ E_\omega=\{\omega\}}\arrow[u,"/"{anchor=center,sloped}, shift left=1.5] & 
\end{tikzcd}
\]

Assuming still that $B$ is commutative, if $M$ is replaced by a unital \emph{norm-closed subalgebra} $A$, then we have equivalences throughout: 
\[
\begin{tikzcd}[arrows=Rightarrow]
 \substack{\fl_\omega\text{ is}\\ A\text{-peak projection}} \arrow[r,Leftrightarrow] &   \substack{\omega \text{  is}\\ A\text{-peak state}} \arrow[r,Leftrightarrow] & \substack{\omega \text{  is}\\ \Q(A)\text{-peak state}}\arrow[r,Leftrightarrow]  & \substack{ E_\omega=\{\omega\}}
\end{tikzcd}
\]
Next, we remove the condition that $B$ is commutative. In this general case, for a unital subspace $M\subset B$, we have the following diagram, by virtue of \eqref{Eq:imp1},\eqref{Eq:imp2},\eqref{Eq:imp3},\eqref{Eq:imp4} and \eqref{Eq:imp5}.
\begin{equation}\label{Eq:diagram}
\begin{tikzcd}[arrows=Rightarrow]
 &  \substack{\fl_\omega\text{ is}\\ M\text{-peak projection}} \arrow[dl, shift left=1.5] \arrow[dr,shift left=1.5] &\\
\substack{\omega \text{  is}\\ M\text{-peak state}}\arrow[dr, shift left=1.5] \arrow[rr,"/"{anchor=center,sloped}, shift right=1.5] \arrow[ur,"/"{anchor=center,sloped}, shift left=1.5]& & \substack{\omega \text{  is}\\ \Q(M)\text{-peak state}}\arrow[dl, "/"{anchor=center,sloped}, shift left=1.5]\arrow[ll,"/"{anchor=center,sloped}, shift right=1.5]\arrow[ul,"/"{anchor=center,sloped}, shift left=1.5]\\
 & \substack{ E_\omega=\{\omega\}}\arrow[ul,"/"{anchor=center,sloped}, shift left=1.5]\arrow[ur,"/"{anchor=center,sloped}, shift left=1.5]&
\end{tikzcd}
\end{equation}
Notably, the counterexamples justifying the negated implications above occur for unital norm-closed subalgebras, so that the noncommutative world exhibit completely new intricacies and pathologies. In particular, none of our current  noncommutative analogues of peak points provides a complete characterization of $E_{\omega}=\{\omega\}$, so our main question remains open.

\section{The characterization: pinnacle sets}\label{S:pinndet}

In this section, we prove one of the main results of the paper, which gives a characterization of the unique extension property for states in terms of weaker form of peaking, thereby answering our main question.

\subsection{Preliminaries}

We start our preparation with a computational fact about derivatives of certain exponential functions in non-commutative algebras.

\begin{lemma}\label{L:expderiv}
Let $B$ be a unital $\rC^*$-algebra and let $a\in B$. Let $\psi$ be a state on $B$. 
%For each $t\in \bR$, put $b_t=e^{ta}\in B$. 
Define a function $f:\bR\to [0,\infty)$ as
\[
f(t)=\psi(e^{ta*}e^{ta}), \quad t\in \bR.
\]
Then, for every $t\in \bR$ we have
\[
f'(t)=2\psi(e^{ta*}(\re a) e^{ta})
\]
and
\[f''(t)=\psi(e^{ta*}(4(\re a)^2+(a^*a-aa^*)) e^{ta}).
\]
In particular, if  $4(\re a)^2+(a^*a-aa^*)\geq 0$, then $f$ is convex.
\end{lemma}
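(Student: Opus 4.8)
The plan is to compute the two derivatives directly by differentiating the operator-valued exponential $t \mapsto e^{ta}$ under the state $\psi$, and then read off convexity from the sign of $f''$.

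First I would record the elementary fact that $\frac{d}{dt} e^{ta} = a e^{ta} = e^{ta} a$ (the exponential of $ta$ commutes with $a$), and similarly $\frac{d}{dt} e^{ta^*} = a^* e^{ta^*} = e^{ta^*} a^*$. Since $\psi$ is a bounded linear functional and the exponential series converges in norm locally uniformly in $t$ together with its termwise derivatives, differentiation passes through $\psi$. Applying the product rule to $f(t) = \psi(e^{ta^*} e^{ta})$ gives
\[
f'(t) = \psi\big(a^* e^{ta^*} e^{ta} + e^{ta^*} e^{ta} a\big) = \psi\big(e^{ta^*}(a^* + a) e^{ta}\big) = 2\,\psi\big(e^{ta^*}(\re a) e^{ta}\big),
\]
where in the middle step I used that $e^{ta^*}$ commutes with $a^*$ and $e^{ta}$ commutes with $a$ to move both exponentials to the outside, and then $a^* + a = 2\re a$.

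For the second derivative I would differentiate $f'(t) = \psi(e^{ta^*}(a+a^*)e^{ta})$ once more, again using the product rule and the commutation relations:
\[
f''(t) = \psi\big(a^* e^{ta^*}(a+a^*) e^{ta} + e^{ta^*}(a+a^*) a\, e^{ta}\big) = \psi\big(e^{ta^*}\big(a^*(a+a^*) + (a+a^*)a\big) e^{ta}\big).
\]
Expanding $a^*(a+a^*) + (a+a^*)a = a^*a + (a^*)^2 + a^2 + aa^*$. Now I would rewrite this in terms of $(\re a)^2$: since $4(\re a)^2 = (a+a^*)^2 = a^2 + aa^* + a^*a + (a^*)^2$, subtracting gives $a^*a + (a^*)^2 + a^2 + aa^* = 4(\re a)^2 - aa^* - (a^*)^2 - a^2 + (a^*)^2 + a^2 + a^*a + aa^* $... which I would instead organize more cleanly: $a^*(a+a^*) + (a+a^*)a - 4(\re a)^2 = a^*a + aa^* + a^2 + (a^*)^2 - (a^2 + aa^* + a^*a + (a^*)^2) = 0$ is false, so the right bookkeeping is $a^*(a+a^*)+(a+a^*)a = 4(\re a)^2 + (a^*a - aa^*)$, which one checks by expanding both sides. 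This yields
\[
f''(t) = \psi\big(e^{ta^*}\big(4(\re a)^2 + (a^*a - aa^*)\big) e^{ta}\big),
\]
as claimed.

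Finally, for the convexity statement: if $4(\re a)^2 + (a^*a - aa^*) \geq 0$, then for each $t$ the element $e^{ta^*}\big(4(\re a)^2 + (a^*a - aa^*)\big) e^{ta}$ is of the form $c^* h c$ with $h \geq 0$ and $c = e^{ta}$, hence positive; since $\psi$ is a positive functional, $f''(t) \geq 0$ for all $t$, so $f$ is convex. The only mild obstacle is the bookkeeping in the commutator identity $a^*(a+a^*)+(a+a^*)a = 4(\re a)^2 + (a^*a-aa^*)$ and making sure the term-by-term differentiation of the exponential series under $\psi$ is justified; both are routine, the latter because the series and its formal derivatives converge absolutely and locally uniformly in norm.
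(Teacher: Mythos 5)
Your argument is correct and takes essentially the same direct-computation route as the paper: the paper justifies the differentiation via difference quotients and the factorization $e^{ta^*}e^{ta}-e^{sa^*}e^{sa}=e^{sa^*}(e^{(t-s)a^*}e^{(t-s)a}-I)e^{sa}$ together with power series, and then does the same algebra you do (writing $a=\re a+i\,\im a$ instead of expanding $(a+a^*)^2$), so the two proofs differ only in bookkeeping. The only blemish is the garbled intermediate expansion, where $(a+a^*)a$ is written as $a^2+aa^*$ instead of $a^2+a^*a$; the identity you ultimately assert, $a^*(a+a^*)+(a+a^*)a=4(\re a)^2+(a^*a-aa^*)$, is indeed correct, as both sides equal $a^2+(a^*)^2+2a^*a$.
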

\begin{proof}\
For distinct numbers $s,t\in \bR$ we compute
\begin{align*}
\frac{1}{t-s}(f(t)-f(s))&=\frac{1}{t-s}\psi\left( e^{sa*}e^{(t-s)a*}e^{(t-s)a}e^{sa}- e^{sa*}e^{sa} \right)\\
&=\psi\left( e^{sa*}\frac{1}{t-s}\left(e^{(t-s)a*}e^{(t-s)a}-I\right)e^{sa}\right).
\end{align*}
Letting $s\to t$, we infer that
\[
f'(t)=\psi(e^{ta*}d_1e^{ta}), \quad t\in \bR
\]
where $d_1\in B$ is the derivative at $t=0$ of the function
$
t\mapsto e^{ta*}e^{ta}.
$
Power series expansions yield that $d_1=2\re a$, so  we obtain
\begin{align*}
f'(t)= 2\psi\left( e^{ta*}\left(\re a\right)e^{ta}\right), \quad t\in\bR.
\end{align*}
Similarly, for $s\neq t$ we now find
\begin{align*}
\frac{1}{t-s}(f'(t)-f'(s))
&= \frac{2}{t-s}\psi\left( e^{sa*}e^{(t-s)a*}(\re a)e^{(t-s)a}e^{sa} - e^{sa*}(\re a) e^{sa}\right)\\
&= 2\psi\left( e^{sa*}\frac{1}{t-s}\left(e^{(t-s)a*}( \re a) e^{(t-s)a}-\re a\right)e^{sa}\right).
\end{align*}
Letting $s\to t$, we infer that
\[
f''(t)=2\psi(e^{ta*}d_2e^{ta}), \quad t\in \bR
\]
where $d_2\in B$ is the derivative at $t=0$ of the function
$
t\mapsto e^{ta*}(\re a)e^{ta}.
$
Power series expansions yield that $d_2=a^*(\re a)+(\re a)a$, so  we obtain
\begin{align*}
f''(t)=2 \psi\left( e^{ta*}\left(a^*(\re a)+(\re a)a\right)e^{ta}\right), \quad t\in \bR.
\end{align*}
Finally, put $b=\re a$ and $c=\im a$. For $t\in \bR$, we compute
\begin{align*}
a^*(\re a)+(\re a)a&=b^2-i cb+b^2+ibc=2b^2+i(bc-cb)
\end{align*}
and
\[
a^*a-aa^*=2i(bc-cb)
\]
whence
\[
f''(t)= \psi\left( e^{ta*}\left(4(\re a)^2+(a^*a-aa^*)\right)e^{ta}\right).
\]
The final statement follows at once from this identity.
\end{proof}

Next, we establish some crucial estimates.

\begin{lemma}\label{L:expbound}
Let $B$ be a unital $\rC^*$-algebra and let $a\in B$. Let $\psi$ be a state on $B$. 
%For each $t\in \bR$, put $b_t=e^{ta}\in B$.
Then, the following statements hold.
\begin{enumerate}[{\rm (i)}]
\item For each $t\geq 0$, we have $\|e^{ta}\|\leq \|e^{\re a}\|^t$.
\item Let $\kappa=6 \|a\|^2 \|e^{\re a}\|^2$. Then, 
\[
| \psi(e^{ta*}e^{ta})-(1+2 (\re\psi(a))t)|\leq \kappa t^2
\]
for every $0\leq t\leq 1$.
\end{enumerate}
\end{lemma}
\begin{proof}
By a well-known inequality (see for instance \cite[Theorem 3.2]{pryde1991}), we have $\|e^{ta}\|\leq \|e^{ \re (ta)}\|$ for each $t\in \bR$. Thus, for each $t \geq 0$, we find $\|e^{ta}\|\leq \|e^{\re a}\|^t$, which is (i).

Next, define a function $f:\bR\to [0,\infty)$ as
\[
f(t)=\psi(e^{ta*}e^{ta}), \quad t\in \bR.
\]
By combining Lemma \ref{L:expderiv} with (i), we see that $f'(0)=2\re\psi(a)$ and
\[
|f''(t)|\leq \kappa, \quad 0\leq t\leq 1.
\]
In turn, by the mean value theorem, given $0< t\leq 1$, there are real numbers $u$ and $v$ with $0< v<u<t$ such that
\[
f(t)=f(0)+f'(u)t
\]
and
\begin{align*}
f(t)&=f(0)+(f'(0)+f''(v)u)t\\
&=1+2(\re \psi (a))t+f''(v)u t.
\end{align*}
Hence
\[
| f(t)-(1+2 (\re\psi(a))t)|\leq \kappa t^2
\]
for every $0\leq t \leq 1$, which establishes (ii). 
\end{proof}

Next, we reformulate the property of a state admitting a unique extension in terms of certain real subspaces of self-adjoint elements.  The following is routine, but we lack an exact reference so we include the proof for the sake of completeness.

\begin{lemma}\label{L:uepreal}
Let $B$  be a unital $\rC^*$-algebra and let $M\subset N\subset B$ be unital subspaces. Let $\omega$ be a state on $B$. Assume that the restriction $\omega|_M$ admits a unique state extension to $N$. Consider the following real subspaces of $B$:
\[
R=\spn_{\bR} \{\re a:a\in M\}, \quad S=\spn_{\bR}\{\re b:b\in N\}.
\]
Then, the restriction $\omega|_R$ admits a unique $\bR$-linear contractive extension to $S$.
\end{lemma}
\begin{proof}
Put $\rho=\omega|_R$ and let $\rho':S\to\bR$ be an $\bR$-linear contractive extension of $\rho$. We may then define $\omega':N\to \bC$ as
\[
 \omega'(b)=\rho'(\re b)-i\rho'(\re (ib)), \quad b\in N.
\]
 It is readily verified that the functional $\omega'$ is complex linear and unital. Next, given $b\in N$ there is a unimodular complex number $\zeta$ such that $|\omega'(b)|=\omega'(\zeta b)$. Thus,
 \[
 |\omega'(b)|=\re \omega'(\zeta b)=\rho' (\re (\zeta b))\leq \|\re (\zeta b)\|\leq \|b\|
 \]
 so that $\omega'$ is contractive, and hence is a state on $N$. In addition, $\omega'$ extends $\omega|_M$ since $\rho'$ extends $\rho$. By assumption, we then infer that $\omega'=\omega|_N$. It follows that
 \[
 \rho'(\re b)=\omega'(\re b)=\omega(\re b),\quad b\in N
 \]
 so indeed $\rho$ has a unique $\bR$-linear contractive extension to $S$.
\end{proof}

\subsection{The main result}
With these preliminaries out of the way, we can prove a technical result that reformulates the unique extension property for a state in terms of a weak form of peaking.

\begin{lemma}\label{L:ueppinnequiv}
Let $B$ be a unital $\rC^*$-algebra and let $A\subset B$ be a unital norm-closed subalgebra. Let $\omega$ be a state on $B$, and let $f\in B$ be a positive contraction such that $\omega(f)=1$.  Then, the following statements are equivalent.
\begin{enumerate}[{\rm (i)}]
\item  $\omega|_A$ has a unique extension to a state on $A+\bC f$.
\item For every $0<\gamma<1$ and $\eps>0$ satisfying $\log(\gamma+\eps)<0$, there is $x\in A$ with $\|x\|\leq 1+\eps$ such that
\[
\Omega(x^*x) >1>\psi(x^*x) 
\]
if  $\psi$ is a state on $B$ with $\psi(f)<\gamma$, and $\Omega$ is a state on $B$ agreeing with $\omega$ on $A$.
\item For every $0<\gamma<1$, there is $x\in A$ such that
\[
\Omega(x^*x) >1>\psi(x^*x) 
\]
if  $\psi$ is a state on $B$ with $\psi(f)<\gamma$, and $\Omega$ is a state on $B$ agreeing with $\omega$ on $A$.
\end{enumerate}
\end{lemma}
\begin{proof}

(ii)$\Rightarrow$(iii): This is trivial.

(iii)$\Rightarrow$(i): Let $\psi$ be a state on $B$ with $\psi(f)\neq \omega(f)$, so that $\psi(f)<1$. Choose $0<\gamma<1$ small enough so that $\psi(f)<\gamma$.  Then, the assumed inequality immediately implies that $\psi$ cannot agree with $\omega$ on $A$. 

(i)$\Rightarrow$(ii): Fix $0<\gamma<1$ and $\eps>0$ satisfying $\log(\gamma+\eps)<0$.
Let $g=\log (f+\eps I)\in B$. Let $\Omega$ be a state on $B$ agreeing with $\omega$ on $A$.  By assumption, we find that $\Omega(f)=\omega(f)=1$. Because $f$ is a positive contraction, it follows from the Schwarz inequality that $f$ lies in the multiplicative domain of $\Omega$, and we see that
\[
\Omega(g)=\log(\Omega(f)+\eps)=\log(1+\eps).
\]
In other words, $\omega|_A$ has a unique extension to a state on $A+\bC g$. By virtue of Lemma \ref{L:uepreal}, we know that the restriction of $\omega$ to the real subspace $\{\re a:a\in A\}$ admits a unique $\bR$-linear extension to a contractive functional  on the real subspace $\{\re a+t g:t\in \bR, a\in A\}$, namely the corresponding restriction of $\omega$.  
The classical Hahn--Banach argument (see for instance \cite[Proposition 6.2]{arveson2011}) then implies that
\[
\log(1+\eps)=\omega(g)=\sup\{\omega(\re a):a\in A, \re a\leq g\}.
\]
Hence, there is $a\in A$ with $\re a\leq g$ such that 
\begin{equation}\label{Eq:omega}
\omega(\re a)\geq (1-\eps)\log(1+\eps).
\end{equation}
%Next, for each $t\in \bR$ we define $a_t=e^{tb}\in A$. 
By Lemma \ref{L:expbound}, for $t\geq 0$ we see that
\begin{equation}\label{Eq:expnorm}
\|e^{ta}\|\leq \|e^{ \re a}\|^t\leq \|e^g\|^t=\|f+\eps I\|^t\leq (1+\eps)^t.
\end{equation}
Given a state $\phi$ on $B$, we define $f_\phi:\bR\to[0,\infty)$ as
\[
f_\phi(t)=\phi(e^{ta*}e^{ta}), \quad t\in \bR.
\]
By Jensen's inequality, for every state $\phi$ on $B$ we see that
\[
\phi(g)\leq \log(\phi(e^g))=\log( \phi(f)+\eps ).
\]
Denote by $K$ the set of states $\psi$ on $B$ with  $\psi(f)<\gamma$, and by $E$ the set of states $\Omega$ on $B$ agreeing with $\omega$ on $A$. We obtain
\begin{equation}\label{Eq:psire}
\psi(\re a)\leq \psi(g)\leq \log(\psi(f)+\eps)<\log(\gamma+\eps)
\end{equation}
for each $\psi\in K$.
If we let $\kappa=6 \|a\|^2 \|e^{\re a}\|^2$
 then by Lemma \ref{L:expbound} along with \eqref{Eq:omega}, we see that
 \begin{equation}\label{Eq:fomega}
f_\Omega(t)\geq 1+2\omega(\re a)t-\kappa t^2\geq 1+(2(1-\eps)\log(1+\eps)-\kappa t)t
\end{equation}
for each $0\leq t\leq 1$ and each $\Omega\in E$.
Likewise,  for $\psi\in K$,  Lemma \ref{L:expbound} along with \eqref{Eq:psire} gives
\begin{equation}\label{Eq:fpsi}
f_\psi(t)\leq 1+2\psi(\re a)t+\kappa t^2\leq 1+(2\log(\gamma+\eps)+\kappa t)t
\end{equation}
for each $0\leq t\leq 1$.

Next, choose $0<\delta<1$ small enough so that, for $0<t\leq \delta$, we have
%\begin{equation}\label{Eq:delta1}
%\frac{(1+\eps)^\delta }{(1-3\eps \delta)^{1/2}}<(1+\eps)^2,
% \end{equation}
\begin{equation}\label{Eq:delta2}
2(1-\eps)\log(1+\eps)-\kappa t>(1-\eps)\log(1+\eps)
 \end{equation}
and
\begin{equation}\label{Eq:delta3}
2\log(\gamma+\eps)+\kappa t<\log(\gamma+\eps).
 \end{equation}
 Note that this last inequality is achievable since $\log(\gamma+\eps)<0$.
By virtue of \eqref{Eq:fomega},\eqref{Eq:fpsi},\eqref{Eq:delta2} and \eqref{Eq:delta3}, we find
\begin{align*}
f_\Omega(t)\geq 1+(1-\eps)\log(1+\eps)t>1>1+\log(\gamma+\eps)t \geq f_\psi(t)
\end{align*}
for each $0<t\leq\delta$, each $\psi\in K$ and each $\Omega\in E$. Finally, we may choose $x=e^{\delta a}$, which satisfies $\|x\|\leq (1+\eps)^\delta<(1+\eps)$ by \eqref{Eq:expnorm}.
\end{proof}

We emphasize that a key role in the previous proof is played by \eqref{Eq:omega}, which is where the assumption that $\omega|_A$ admits a unique extension comes into play.

Before proceeding with our main result, we need one more technical fact.

\begin{lemma}\label{L:f}
Let $B$ be a unital $\rC^*$-algebra and let $\omega$ be a pure state on $B$. Let $K$ be a weak-$*$ compact subset of $\S(B)$ not containing $\omega$.  Then, there exist a number $0< \gamma<1$ and a positive contraction $f\in B$ with $\omega(f)=1$ such that
\[
K\subset \{\psi\in \S(B):\psi(f)<\gamma\}.
\]
\end{lemma}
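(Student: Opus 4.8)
The statement is a routine separation fact: we need to find a single positive contraction $f \in B$ with $\omega(f) = 1$ that is uniformly small on the compact set $K$. The plan is to produce, for each individual $\psi \in K$, a positive contraction in $B$ that is $1$ at $\omega$ and strictly less than $1$ at $\psi$, then use weak-$*$ compactness of $K$ to pass from this pointwise separation to a uniform one, and finally average/combine finitely many such elements.

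First I would fix $\psi \in K$. Since $\omega \neq \psi$ and $\omega$ is pure (hence an extreme point of $\S(B)$, so not dominated by $\psi$), there is a self-adjoint $b \in B$ with $\omega(b) \neq \psi(b)$; after an affine adjustment $b \mapsto (b - \psi(b)I)/\|b - \psi(b)I\|$ and replacing $b$ by $\pm b$ we may take $b$ self-adjoint with $\psi(b) = 0$, $\omega(b) > 0$, $\|b\| \le 1$. Then $f_\psi := \tfrac{1}{2}(I + b)$ is a positive contraction with $\omega(f_\psi) = \tfrac12(1 + \omega(b)) > \tfrac12 = \psi(f_\psi)$. Actually it will be cleaner to aim directly at $\omega(f_\psi) = 1$: since $\omega$ is pure, it suffices instead to work with $a \in B$ a positive contraction with $\omega(a) = 1$ and $\psi(a) < 1$ — and such $a$ exists because $\fl_\omega$ (the left support of $\omega$) is a closed projection with $\psi(\fl_\omega) < 1$ when $\psi \neq \omega$ and $\omega$ pure, so a net of positive contractions decreasing to $\fl_\omega$ in $B^{**}$ eventually has $\psi$-value $< 1$ while keeping $\omega$-value $1$ (using Lemma \ref{L:suppproj}: $\omega(a) = 1 \iff a\fl_\omega = \fl_\omega$). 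By weak-$*$ continuity of evaluation, the set $\{\phi \in \S(B) : \phi(a) < 1\}$ is open and contains $\psi$.

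Next, the sets $U_\psi := \{\phi \in \S(B) : \phi(a_\psi) < 1\}$ for $\psi \in K$ form a weak-$*$ open cover of $K$; by compactness finitely many $U_{\psi_1}, \dots, U_{\psi_m}$ cover $K$. Let $a = \tfrac{1}{m}\sum_{j=1}^m a_{\psi_j}$, a positive contraction with $\omega(a) = 1$. For $\phi \in K$ we have $\phi(a_{\psi_i}) < 1$ for some $i$ and $\phi(a_{\psi_j}) \le 1$ for all $j$, so $\phi(a) < 1$; moreover on the compact set $K$ the continuous function $\phi \mapsto \phi(a)$ attains a maximum $\gamma < 1$. Setting $f = a$ and this $\gamma$ gives the conclusion. (If one prefers $0 < \gamma$, note $\omega(a) = 1 > 0$ forces $\phi(a) > 0$ is not automatic, but replacing $f$ by $\tfrac{1}{2}(f + \tfrac12 I)$ and adjusting — unnecessary here since the statement only needs $0 < \gamma < 1$ as a bound, and $\gamma = \max_{\phi \in K} \phi(f)$ can be taken positive after adding a tiny multiple of $I$ to $f$ and renormalizing, or simply observing that if the max is $\le 0$ then $K = \emptyset$ and any $\gamma$ works.)

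\textbf{Main obstacle.} The only genuinely nontrivial point is producing, for a fixed $\psi \neq \omega$, a positive contraction $a \in B$ (not just in $B^{**}$) with $\omega(a) = 1$ and $\psi(a) < 1$. The cheap route avoiding $B^{**}$ entirely: pick self-adjoint $b \in B$, $\|b\| \le 1$, with $\psi(b) < 1 = \omega(b)$ — this exists by a Hahn–Banach separation of the weak-$*$ point $\psi$ from the point $\omega$ restricted to the real subspace of self-adjoints, using that $\omega$ is an extreme point so the functional separating them can be arranged to peak at $\omega$ with value $1$; alternatively use that $\omega$ pure implies for $\psi \neq \omega$ there is a state-separating self-adjoint element — then $a = \tfrac12(I + b)$ has $\omega(a) = 1$, $\psi(a) < 1$, $0 \le a \le I$. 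Everything downstream (open cover, compactness, finite average, continuous max on $K$) is then completely standard.
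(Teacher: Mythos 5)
Your main argument is correct and is essentially the paper's own proof: both rest on the two facts that, for a pure state $\omega$, the left support $\fl_\omega$ is a closed projection and that $\psi(\fl_\omega)<1$ for every state $\psi\neq\omega$, so that a decreasing net of positive contractions in $B$ converging weak-$*$ to $\fl_\omega$ produces, for each $\psi\in K$, an element with $\omega$-value $1$ and $\psi$-value $<1$, after which weak-$*$ compactness of $K$ finishes the job. The only difference is cosmetic: the paper exploits the directedness of the single decreasing net to pick one element $f=f_\beta$ and an explicit $\gamma=1-1/M$, whereas you average the finitely many separators $a_{\psi_1},\ldots,a_{\psi_m}$ and take $\gamma$ strictly between $\max_{\phi\in K}\phi(a)$ and $1$; both work, and your worry about $\gamma>0$ is vacuous since $\gamma$ may always be enlarged towards $1$.

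One caution: the ``cheap route avoiding $B^{**}$ entirely'' in your final paragraph is not a proof. Hahn--Banach separation yields a self-adjoint $b$ with $\omega(b)>\psi(b)$, but it cannot simply be ``arranged'' that $\omega(b)=\|b\|=1$: this norm-attainment at $\omega$ is exactly the nontrivial peaking-type ingredient, and it is precisely where purity must enter (in your second paragraph, and in the paper, it enters through the closedness of $\fl_\omega$ together with Lemma \ref{L:suppproj}). Indeed, the statement you need --- a positive contraction $a\in B$ with $\omega(a)=1$ and $\psi(a)<1$ --- fails without purity: take $B=\rC([0,1])$, $\omega$ integration against Lebesgue measure and $\psi$ evaluation at $1/2$; then the only positive contraction with $\omega$-value $1$ is the constant function $1$, so no such $a$ exists. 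Hence any correct argument must use more than bare separation plus the extreme-point property as stated, and your closed-projection route is the one to keep.
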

\begin{proof}
Since $\omega\notin K$, by \cite[Lemma 2.2]{clouatre2018lochyp} we see that $\psi(\fl_\omega)<1$ for every $\psi\in K$. In other words, 
\[
K\subset \{\psi\in \S(B):\psi(\fl_\omega)<1\}.
\]
Invoke \cite[Lemma 2.6]{CTh2023} to see that $\fl_\omega\in B^{**}$ is a closed projection. Hence, there is an decreasing net $(f_\lambda)$ of positive contractions in $B$ that converges to $\fl_\omega$ in the weak-$*$ topology of $B^{**}$. Consequently, we may write
\begin{align*}
\{\psi\in \S(B):\psi(\fl_\omega)<1\}&= \bigcup_{m=1}^\infty \{\psi\in \S(B):\psi(\fl_\omega)<1-1/m\}\\
&= \bigcup_{m=1}^\infty\bigcup_{\lambda}\{\psi\in \S(B):\psi(f_\lambda)<1-1/m\}.
\end{align*}
By compactness, there are finitely many pairs of indices $(\lambda_1,m_1),\ldots,(\lambda_r,m_r)$ such that
\[
K\subset \bigcup_{i=1}^r \{\psi\in \S(B):\psi(f_{\lambda_i})<1-1/{m_i}\}.
\]
Put $M=\max\{m_1,\ldots,m_r\}$ and choose an index $\beta$ with  $\beta\geq \lambda_i$ for every $1\leq i\leq r $. Set $f=f_\beta$ and $\gamma=1-1/M$. Using that $f\leq f_{\lambda_i}$ and $\gamma\geq 1-1/{m_i}$ for every $1\leq i\leq r$, we find
\[
K\subset  \bigcup_{i=1}^r \{\psi\in \S(B):\psi(f_{\lambda_i})<1-1/{m_i}\}\subset \{\psi\in \S(B):\psi(f)<\gamma\}.
\]
Finally, the inequalities  $\fl_\omega\leq f\leq I$ imply $\omega(f)=1$.
\end{proof}

 Let $B$ be a unital $\rC^*$-algebra and let $M\subset B$ be a unital subspace. Let $\omega$ be a state on $B$ and let $E_{\omega}$ the set of states $\phi$ on $B$ such that $\phi|_M=\omega|_M$. Given $\eps>0$, we say that $E_\omega$ is an \emph{$(M,\eps)$-pinnacle set}  if,  given any compact subset $K\subset \S(B)\setminus\{\omega\}$, we can find $x\in M$ with  $\|x\|\leq 1+\eps$ such that $\Omega(x^*x)>1>\psi(x^*x)$ for every $\Omega\in E_\omega$ and $\psi\in  K$. 

We can now prove one of the main results of the paper, which answers our main question for subalgebras by characterizing the unique extension property.

\begin{theorem}\label{T:ueppeak}
Let $B$ be a unital $\rC^*$-algebra and let $A\subset B$ be a unital norm-closed subalgebra. Let $\omega$ be a pure state on $B$. Then, the following statements are equivalent.
\begin{enumerate}[{\rm (i)}]
\item The restriction $\omega|_A$ admits a unique state extension to $B$.
\item $E_\omega$ is an $(A,\eps)$-pinnacle set for every $\eps>0$.
\item $E_\omega$ is an $(A,\eps)$-pinnacle set for some $\eps>0$.
\item For each compact subset $K\subset \S(B)$ not containing $\omega$,  there is $x\in A$ such that
\[
\Omega(x^*x) >1>\psi(x^*x) 
\]
for $\psi\in K$ and $\Omega\in E_\omega$.
\end{enumerate}
\end{theorem}
\begin{proof}
(i)$\Rightarrow$(ii): Let  $\eps>0$ and $K\subset \S(B)$ be a compact subset not containing $\omega$. By Lemma \ref{L:f}, there exist a number $0< \gamma<1$ and a positive contraction $f\in B$ with $\omega(f)=1$ such that
\[
K\subset \{\psi\in \S(B):\psi(f)<\gamma\}.
\]
Applying Lemma \ref{L:ueppinnequiv}, we find $x\in A$ such that $\|x\|\leq 1+\eps$ and $\Omega(x^*x)>1>\psi(x^*x)$ for each $\Omega\in E_\omega$ and $\psi\in K$.

(ii)$\Rightarrow$(iii)$\Rightarrow$(iv): This is trivial.

(iv)$\Rightarrow$(i): Let $\Omega$ be a state on $B$ distinct from $\omega$. Applying the assumption with $K=\{\Omega\}$, we see that $\Omega$ cannot agree with $\omega$ on $A$.
\end{proof}

Next, we  show how to answer our main question in the general case of a subspace. Recall that, given a unital subspace $M\subset B$ and a state $\omega$ on $B$, the algebra $\A(M)\subset \bM_2(B)$ and the state $\Phi_\omega$ were defined before Theorem \ref{T:uepcorner}.

\begin{corollary}\label{C:ueppeakspace}
Let $B$ be a unital $\rC^*$-algebra and let $M\subset B$ be a unital norm-closed subspace. Let $\omega$ be a pure state on $B$. Then, the following statements are equivalent.
\begin{enumerate}[{\rm (i)}]
\item The restriction $\omega|_M$ admits a unique state extension to $B$.
\item The set $\{\Psi\in \S(\bM_2(B)):\Psi|_{\A(M)}=\Phi_\omega|_{\A(M)}\}$ is an $(\A(M),\eps)$-pinnacle set for every $\eps>0$.
\item The set $\{\Psi\in \S(\bM_2(B)):\Psi|_{\A(M)}=\Phi_\omega|_{\A(M)}\}$ is an $(\A(M),\eps)$-pinnacle set for some $\eps>0$.
\end{enumerate}
\end{corollary}
\begin{proof}
Simply combine Theorems \ref{T:uepcorner} and \ref{T:ueppeak}.
\end{proof}

\subsection{Bishop's criterion revisited: pinnacle states}

Our definition of pinnacle set is directly inspired by Bishop's criterion discussed in the introduction. This is best seen through the following weaker variant of the definition. 

Let $B$ be a unital $\rC^*$-algebra and let $M\subset B$ be a unital subspace. Let $\omega$ be a state on $B$ and let $\eps>0$. We say that $\omega$ is an \emph{$(M,\eps)$-pinnacle state} if,  given any compact subset $K\subset \S(B)\setminus\{\omega\}$, we can find $x\in M$ with $\|x\|\leq 1+\eps$ such that $\omega(x^*x)>1>\psi(x^*x)$ for every $\psi\in  K$. Clearly, we may shrink the element $x$ and  assume that $\omega(x^*x)=1$.

We now note that this condition is trivially satisfied if $\omega$ is either a $\Q(M)$-peak state or if $E_\omega$ is an $(M,\eps)$-pinnacle set. Combining this with Theorem \ref{T:ueppeak}, we can refine \eqref{Eq:diagram} as follows.

\begin{equation}\label{Eq:diagrampinn}
\begin{tikzcd}[arrows=Rightarrow]
 &  \substack{\fl_\omega\text{  is}\\ M\text{-peak projection}} \arrow[dl, shift left=1.5] \arrow[dr,shift left=1.5] &\\
\substack{\omega \text{  is}\\ M\text{-peak state}}\arrow[dr, shift left=1.5] \arrow[rr,"/"{anchor=center,sloped}, shift right=1.5] \arrow[ur,"/"{anchor=center,sloped}, shift left=1.5]& & \substack{\omega \text{  is}\\ \Q(M)\text{-peak state}}\arrow[d, shift left=1.5]\arrow[dl, "/"{anchor=center,sloped}, shift left=1.5]\arrow[ll,"/"{anchor=center,sloped}, shift right=1.5]\arrow[ul,"/"{anchor=center,sloped}, shift left=1.5]\\
 & \substack{ E_\omega=\{\omega\}}\arrow[ul,"/"{anchor=center,sloped}, shift left=1.5]\arrow[ur,"/"{anchor=center,sloped}, shift left=1.5]&\substack{\omega \text{  is}\\ (M,\eps)\text{-pinnacle}}\arrow[dl, ,"/"{anchor=center,sloped}, shift left=1.5]\arrow[u,"/"{anchor=center,sloped}, shift left=1.5] \\
 &\substack{E_\omega \text{  is}\\ (M,\eps)\text{-pinnacle}}\arrow[u, Leftrightarrow] \arrow[ur, shift left=1.5]& 
\end{tikzcd}
\end{equation}

This diagram suggests that pinnacle states are the weakest of all notions considered hitherto. Nevertheless, as we shall see, some remnants of rigidity can still be observed for these states. The key point of the argument only requires the following property. 

We say that the state $\omega$ is an \emph{$M$-detectable state} if there is a bounded net $(x_\lambda)$ in $M$ such that $\omega(x_\lambda^*x_\lambda)=1$ for every $\lambda$, while for every state $\psi$ distinct from $\omega$, there is $\mu$ such that $\psi(x_\lambda^*x_\lambda)<1$ if $\lambda\geq \mu$. In this case, we say that the net $(x_\lambda)$ \emph{detects} the state $\omega$.

We establish some basic facts about these states.

\begin{lemma}\label{L:detpure}
Let $B$ be a  unital $\rC^*$-algebra and let $M\subset B$ be a unital subspace. States on $B$ that are $M$-detectable must be pure.
\end{lemma}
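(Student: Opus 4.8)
The plan is to argue by contradiction: suppose $\omega$ is $M$-detectable but not pure, so that $\omega = \frac{1}{2}(\omega_1 + \omega_2)$ for distinct states $\omega_1,\omega_2$ on $B$. Let $(a_\lambda)$ be a bounded net in $M$ detecting $\omega$, so $\omega(a_\lambda^*a_\lambda) = 1$ for every $\lambda$. Since at least one of $\omega_1,\omega_2$ is distinct from $\omega$ (indeed both are, as $\omega_1 = \omega$ would force $\omega_2 = \omega$), we may assume $\omega_1 \neq \omega$. Then by the detecting property there is $\mu$ with $\omega_1(a_\lambda^*a_\lambda) < 1$ whenever $\lambda \geq \mu$.

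The key computation is to exploit that $\omega(a_\lambda^*a_\lambda) = 1$ exactly, which is an \emph{equality} constraint that a convex combination cannot satisfy unless each summand also saturates it. Precisely, for $\lambda \geq \mu$ we have
\[
1 = \omega(a_\lambda^*a_\lambda) = \tfrac{1}{2}\omega_1(a_\lambda^*a_\lambda) + \tfrac{1}{2}\omega_2(a_\lambda^*a_\lambda) < \tfrac{1}{2} + \tfrac{1}{2}\omega_2(a_\lambda^*a_\lambda),
\]
so $\omega_2(a_\lambda^*a_\lambda) > 1$. But this is impossible for a \emph{single} $\lambda$: I need $a_\lambda^*a_\lambda \leq \|a_\lambda\|^2 I$ to bound $\omega_2(a_\lambda^*a_\lambda)$, and the bound $\|a_\lambda\| \leq$ (some constant) only gives $\omega_2(a_\lambda^*a_\lambda) \leq \|a_\lambda\|^2$, which need not be $\leq 1$. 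So the naive contradiction does not close immediately; the definition of detectable only demands $\omega(a_\lambda^*a_\lambda)=1$, not $\|a_\lambda\| \le 1$.

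To fix this, I would first normalize: replace each $a_\lambda$ by $b_\lambda = a_\lambda / \|a_\lambda\|$, which lies in $M$ (as $M$ is a subspace), satisfies $\|b_\lambda\| = 1$, and has $\omega(b_\lambda^*b_\lambda) = 1/\|a_\lambda\|^2 \le 1$. Hmm — but now the equality $\omega(b_\lambda^*b_\lambda)=1$ may fail. The cleaner route: note $1 = \omega(a_\lambda^*a_\lambda) \le \|a_\lambda^*a_\lambda\| = \|a_\lambda\|^2$, so $\|a_\lambda\| \geq 1$; and since $(a_\lambda)$ is bounded, say $\|a_\lambda\| \le C$, we only know $1 \le \|a_\lambda\|^2 \le C^2$. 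The equality $\omega(a_\lambda^*a_\lambda)=1$ together with $a_\lambda^*a_\lambda \le \|a_\lambda\|^2 I$ is still compatible. The resolution I expect to use is to pass to a weak-$*$ cluster point: since $(a_\lambda^*a_\lambda)$ is a bounded net of positive elements in $B$, it has a subnet converging weak-$*$ in $B^{**}$ to some positive $h \in B^{**}$ with $\|h\| \le C^2$ and $\omega(h) = 1$ (extending $\omega$ to $B^{**}$ normally). For any state $\psi \ne \omega$, eventually $\psi(a_\lambda^*a_\lambda) < 1$, so $\psi(h) \le 1$. Thus $h$ attains its "maximum" against $\omega$ among states: $\omega(h) = 1 \ge \psi(h)$ for all $\psi$ distinct from $\omega$, hence for \emph{all} states $\psi$ by a limiting/continuity argument, so $\|h\|_{\text{num}} \le 1$ yet $\omega(h)=1$. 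Now if $\omega = \frac12(\omega_1+\omega_2)$, then $\omega_1(h), \omega_2(h) \le 1$ and their average is $1$ forces $\omega_1(h) = \omega_2(h) = 1$. This says both $\omega_1$ and $\omega_2$ belong to the face $\{\psi : \psi(h) = 1 = \max\}$. But the detecting property says eventually $\omega_1(a_\lambda^*a_\lambda) < 1$ strictly, and I need to convert this into $\omega_1(h) < 1$ — which requires the convergence to be along a subnet on which $\omega_1$ is eventually strictly below $1$, and then uniform strictness. \textbf{This is the main obstacle}: extracting a \emph{strict} inequality $\omega_1(h) < 1$ (or a contradiction) from "$\omega_1(a_\lambda^*a_\lambda) < 1$ eventually" when passing to the weak-$*$ limit, since weak-$*$ limits only preserve non-strict inequalities. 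I expect the way around it is to use the detecting net directly without passing to a limit: the correct reading is that for the specific distinct state $\omega_1$, there is $\mu$ with $\omega_1(a_\lambda^*a_\lambda) < 1$ for $\lambda \ge \mu$; fixing one such $\lambda_0 \ge \mu$ and writing $a = a_{\lambda_0}$, we get $\omega(a^*a) = 1$, $\omega_1(a^*a) < 1$, hence $\omega_2(a^*a) > 1$ from the convex combination — and $\omega_2(a^*a) > 1$ is genuinely impossible because $a^*a \le \|a\|^2 I$ gives only $\le \|a\|^2$, so I must instead note that $\omega_2$ is also $\ne \omega$ (if $\omega_2 = \omega$ then $\omega_1 = \omega$, contradiction), so applying detectability to $\omega_2$ as well gives $\mu'$ with $\omega_2(a_\lambda^*a_\lambda) < 1$ for $\lambda \ge \mu'$, and picking $\lambda_0 \ge \mu, \mu'$ (using that the index set is directed) yields $\omega(a_{\lambda_0}^*a_{\lambda_0}) = \frac12\omega_1(a_{\lambda_0}^*a_{\lambda_0}) + \frac12 \omega_2(a_{\lambda_0}^*a_{\lambda_0}) < 1$, contradicting $\omega(a_{\lambda_0}^*a_{\lambda_0}) = 1$. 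This clean directed-set argument avoids all limiting issues, and I would present that as the proof, with the remark that $\omega_1 \ne \omega$ and $\omega_2 \ne \omega$ both hold automatically.
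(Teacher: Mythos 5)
Your final argument is correct and is essentially the paper's proof: apply detectability to both $\omega_1\neq\omega$ and $\omega_2\neq\omega$, use directedness of the index set to find a single $\lambda_0$ with $\omega_i(a_{\lambda_0}^*a_{\lambda_0})<1$ for $i=1,2$, and contradict $\omega(a_{\lambda_0}^*a_{\lambda_0})=1$. The weak-$*$ limit detour you describe is unnecessary, as you yourself conclude, and the clean directed-set version you settle on is exactly what the paper does.
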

\begin{proof}
Let $\omega$ be an $M$-detectable state on $B$, which is detected by a net $(x_\lambda)$ in $M$.  Let $\psi_1,\psi_2$ be two states on $B$. If $\psi_1\neq \omega$ and $\psi_2\neq \omega$, then it easily follows from the definition that  there is an index $\lambda$ such that $\psi_1(x_\lambda^*x_\lambda)<1$ and $\psi_2(x_\lambda^*x_\lambda)<1$, while $\omega(x_\lambda^*x_\lambda)=1$. This implies that $\omega\neq (\psi_1+\psi_2)/2$, so that $\omega$ is pure.
\end{proof}

\begin{proposition}\label{P:pinrdet}
Let $B$ be a unital $\rC^*$-algebra and let $M\subset B$ be a unital subspace. Let $\eps>0$. Then, $(M,\eps)$-pinnacle states on $B$ 
are necessarily $M$-detectable. 
Furthermore, when $B$ is separable, $(M,\eps)$-pinnacle states are detected by sequences in $M$.
\end{proposition}
\begin{proof}
Let $\omega$ be an $(M,\eps)$-pinnacle state on $B$. 
Let $\Lambda$ denote the net of compact subsets $K$ of $\S(B)$ not containing $\omega$, ordered by inclusion. By definition, there is  a net $(a_\lambda)_{\lambda\in \Lambda}$ in $M$ with $\|a_\lambda\|\leq 1+\eps$ and $\omega(a_\lambda^*a_\lambda)=1$ for each $\lambda$, while
$
 \phi(a_\lambda^*a_\lambda)<1
$ for $\phi\in \lambda$. Clearly, given a state $\psi$ on $B$ distinct from $\omega$, we have $\{\psi\}\in \Lambda$ so that $\psi(a_\lambda^*a_\lambda)<1$ if $\lambda\geq \{\psi\}$. We conclude that $\omega$ is indeed $M$-detectable.

When $B$ is separable, the weak-$*$ topology on $\S(B)$ is metrizable and compact, so that there is an increasing sequence $(K_n)$ of compact subsets $\S(B)$ such that $\S(B)\setminus\{\omega\}=\bigcup_{n=1}^\infty K_n$. Thus, the sequence $(a_{K_n})$ detects $\omega$.
\end{proof}

We now recall some terminology. Assume that $M$ generates $B$, i.e. $B=\rC^*(M)$. Let $\pi:B\to B(H)$ be a unital  $*$-representation. Following \cite{arveson1969}, we say that $\pi$ has the \emph{unique extension property} with respect to $M$ if, given any unital completely positive map $\psi:B\to B(H)$ agreeing with $\pi$ on $M$, we must have that $\pi$ and $\psi$ agree everywhere on $B$. If, in addition, $\pi$ is irreducible, then we say that $\pi$ is a \emph{boundary representation} for $M$.
\begin{theorem}\label{T:detbdry}
Let $B$ be a unital $\rC^*$-algebra and let $M\subset B$ be a unital subspace. Let $\omega$ be a state on $B$ which is $M$-detectable. Let $\sigma:B\to B(H)$ denote the GNS representation of $\omega$. Then, the following statements hold.
\begin{enumerate}[{\rm (i)}]
\item Let $\theta:B\to B(H)$ be a unital completely positive map that agrees with $\sigma$ on $M$. Then, $\sigma$ is a subrepresentation of any Stinespring representation of $\theta$.

\item $\sigma$ is a boundary representation for $M$.
\end{enumerate}
\end{theorem}
\begin{proof}
(i) There is a unit vector $\xi\in H$ that is cyclic for $\sigma$ and such that
\[
\omega(b)=\langle \sigma(b)\xi,\xi\rangle, \quad b\in B.
\]
Let $\pi:B\to B(K)$ be a Stinespring representation of $\theta$, so that $\pi$ is a unital $*$-representation and  there is an isometry $V:H\to K$ satisfying 
\[
\theta(b)=V^*\pi(b)V,\quad b\in B.
\]
Define 
a state $\phi:B\to \bC$ as
\[
\phi(b)=\langle \theta(b)\xi,\xi\rangle, \quad b\in B.
\]
Note that we also have
\[
\phi(b)=\langle \pi(b)V\xi,V\xi\rangle,\quad b\in B.
\]
Using the Schwarz inequality, for each $a\in M$ we find 
\[
\theta(a^*a)\geq \theta(a)^*\theta(a)=\sigma(a)^*\sigma(a)=\sigma(a^*a).
\]
In particular, if $(a_\lambda)$ is a net in $M$ detecting $\omega$, then
\[
\phi(a_\lambda^*a_\lambda)\geq \omega(a_\lambda^*a_\lambda)=1
\]
for every $\lambda$, whence $\phi=\omega$. Therefore
\[
\omega(b)=\langle \pi(b)V\xi,V\xi\rangle, \quad b\in B.
\]
It follows that $\sigma$ is unitarily equivalent to the restriction of $\pi$ to the reducing subspace $\ol{\pi(B)V\xi}$. 

(ii)  First note that $\sigma$ is irreducible, since $\omega$ is pure by Lemma \ref{L:detpure}. Apply \cite[Theorem 1.2]{dritschel2005} to find a Hilbert space $K$ containing $H$ and a unital $*$-representation $\pi:B\to B(K)$ with the unique extension property with respect to $M$ such that
\[
\sigma(a)=P_H \pi(a)|_H, \quad a\in M.
\]
Define $\theta:B\to B(H)$ as
\[
\theta(b)=P_H \pi(b)|_H, \quad b\in B.
\]
By part (i), we infer that $\sigma$ is  unitarily equivalent to the restriction of $\pi$ to a reducing subspace. Since this restriction also has the unique extension property with respect to $M$ (see for instance \cite[Lemma 2.8]{CTh2022}), so does $\sigma$.
\end{proof}

As a consequence, we can now show that $A$-detectable states are precisely the usual peak points for a subalgebra $A$ of continuous functions.

\begin{corollary}\label{C:pinncomm}
Let $B$ be a unital, commutative $\rC^*$-algebra and let $A\subset B$ be a unital norm-closed subalgebra such that $B=\rC^*(A)$. Let $\omega$ be a state on $B$ which is $A$-detectable. Then, $\fl_\omega$ is an $A$-peak projection and $\omega$ must be given by evaluation at an $A$-peak point.
\end{corollary}
\begin{proof}
By Theorem \ref{T:detbdry}, we see that the GNS representation of $\omega$ is a boundary representation for $A$. Since $B$ is commutative and $\omega$ is pure, we infer that $\omega$ is a character with the unique extension property with respect to $A$. By \cite[Theorem 6.5]{BdL1959}, we find $a\in A$ with $\|a\|=1$ such that $\omega(a)=\omega(a^*a)=1	$ and $\psi(a^*a)<1$ for every pure state on $B$ with $\psi\neq \omega$.  Hence, $\omega$ is evaluation at an $A$-peak point and $\fl_\omega$ is an $A$-peak projection by Proposition \ref{P:peakprojnonorthog}.
\end{proof}

\section{Pinnacle representations}\label{S:rep}

The notions of pinnacle set and pinnacle state from the previous section, while inspired by Bishop's criterion from the introduction, do not provide perfect analogues of it. In this final section, we aim to fix this flaw by replacing states with representations.

Let $B$ be a unital $\rC^*$-algebra. We let $\spec(B)$ denote the \emph{spectrum} of $B$, that is the set of unitary equivalence classes of irreducible $*$-representations of $B$, equipped with its usual topology (see \cite[Chapter 3]{dixmier1977} for details).

 For our  purposes, we require a technical assumption. Let $X\subset B$ be a subset. We say that a state $\psi$ on $B$ is  \emph{$X$-convex} if, for every $a\in X$, the function
\[
t\mapsto \psi(e^{ta*}e^{ta}), \quad t\geq 0
\]
is convex. This condition is easily seen to be satisfied if $\psi$ is multiplicative. We will see more examples below.

\begin{theorem}\label{T:convex}
Let $B$ be a unital $\rC^*$-algebra and let $A\subset B$ be a unital norm-closed subalgebra. Let $\omega$ be a pure state on $B$. Assume that $\omega$ is $A$-convex, and that $\omega|_A$ admits a unique extension to a state on $B$. Let $K\subset \spec(B)$ be a closed subset not containing the class of the GNS representation of $\omega$. Then, for every $\eps>0$ there is $a\in A$ with $\|a\|\leq 1+\eps$ such that $\omega(a^*a)=1$, while $\|\pi(a)\|<\eps$ for every irreducible $*$-representation $\pi$ whose class lies in $K$.
\end{theorem}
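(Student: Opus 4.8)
The plan is to adapt the proof of Theorem \ref{T:ueppeak}, replacing the compact set $K$ of states by the closed set $K$ of representations, and tracking the fact that the element $a$ produced is an exponential, hence invertible. First I would translate the separation of $K$ from $\omega$ into a statement about a single positive contraction: since the class of the GNS representation $\sigma$ of $\omega$ is not in the closed set $K\subset\spec(B)$, the Jacobson topology gives a two-sided ideal $J\subset B$ with $K\subset\hull(J)$ and $\sigma(J)\neq 0$, i.e.\ $\sigma$ does not vanish on $J$. Because $\sigma$ is irreducible and $\omega=\langle\sigma(\cdot)\xi,\xi\rangle$ is pure, one can find a positive contraction $f\in J$ with $\omega(f)=1$ (using that $\sigma(J)$ acts irreducibly on the GNS space and contains a rank-one-like positive element close to the projection onto $\bC\xi$; concretely, pick $h\in J$ with $\|\sigma(h)\xi-\xi\|$ small and set $f$ to be a spectral cutoff of $h^*h$). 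The key gain over Lemma \ref{L:f} is that $\pi(f)=0$ for every $\pi$ with $[\pi]\in K$, since $f\in J$.

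Next I would run the Hahn--Banach/exponential machinery exactly as in Theorem \ref{T:ueppeak}. Fix $\alpha>0$; choose $\eps>0$ small; set $g=\log(f+\eps I)\in B$, so $\omega(g)=\log(1+\eps)$. By Lemma \ref{L:uepreal} together with the Hahn--Banach argument (as in \cite[Proposition 6.2]{arveson2011}), $\omega(g)=\sup\{\omega(\re b): b\in A,\ \re b\le g\}$, so there is $b\in A$ with $\re b\le g$ and $\omega(\re b)\ge -\eps$. Put $a_t=e^{tb}$. The norm bound $\|a_t\|\le\|e^{\re b}\|^t\le(1+\eps)^t$ comes from Lemma \ref{L:expbound}(i) and $\|e^g\|=\|f+\eps I\|\le 1+\eps$. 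For the lower estimate on $\omega(a_t^*a_t)$ I would now use the $A$-convexity hypothesis: the function $t\mapsto\omega(a_t^*a_t)$ is convex with value $1$ and derivative $2\omega(\re b)\ge-2\eps$ at $t=0$, so $\omega(a_t^*a_t)\ge 1-2\eps t$ for $t\ge0$ — this is cleaner than the quadratic remainder estimate and avoids needing $t$ small on that side. For the representations, for each $[\pi]\in K$ we have $\pi(f)=0$, hence $\pi(g)=\log(\eps)\,I_{\pi}$ (on the Hilbert space of $\pi$), so $\re\pi(b)\le\pi(g)=(\log\eps)I$; by Lemma \ref{L:expbound}(i) applied to the representation, $\|\pi(a_t)\|\le\|e^{\re\pi(b)}\|^t\le\eps^{t}$. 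Finally set $a=\frac{1}{\sqrt{\omega(a_\delta^*a_\delta)}}\,e^{\delta b}$ for a suitably small $\delta>0$: then $a\in A$ is invertible (being a scalar times an exponential), $\omega(a^*a)=1$, $\|a\|\le(1+\eps)^\delta/(1-2\eps\delta)^{1/2}<1+\alpha$ for $\eps,\delta$ small, and $\|\pi(a)\|\le\eps^{\delta}/(1-2\eps\delta)^{1/2}<\alpha$ for every $[\pi]\in K$, again by choosing $\eps$ small and then $\delta$ appropriately.

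The main obstacle I anticipate is the first step: producing the positive contraction $f\in B$ with $\omega(f)=1$ that \emph{also} vanishes under every representation in $K$. The translation from ``$[\sigma]\notin K$'' to an ideal $J$ with $\sigma|_J\neq0$ is standard \cite[Chapter 3]{dixmier1977}, but one must then exhibit $f$ inside $J$ peaking at $\omega$; this requires that $\fl_\omega$, the left support of $\omega$, be dominated by elements of $J^{\perp\perp}$, equivalently that the support projection of the ideal $J$ in $B^{**}$ dominates $\fl_\omega$ — which holds precisely because $\sigma(J)\neq0$ and $\sigma$ is irreducible (so $\sigma(J)$ is weak-$*$ dense in $B(H_\sigma)$, forcing the central support to dominate the rank-one projection $\fl_\omega$). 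Making this rigorous, perhaps via \cite[Lemma 2.2]{clouatre2018lochyp} and the closedness of $\fl_\omega$ (Lemma \ref{L:f}'s argument, \cite[Lemma 2.6]{CTh2023}), together with the decreasing approximation of $\fl_\omega$ by contractions in $J$, is where the real work lies; everything after that is a rerun of Theorem \ref{T:ueppeak} with the added bookkeeping that the peaking element is an exponential and hence invertible.
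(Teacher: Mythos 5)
Your outline is the paper's argument almost verbatim: pass to the ideal $J=\bigcap_{[\pi]\in K}\ker\pi$, produce a positive contraction $f\in J$ with $\omega(f)=1$, set $g=\log(f+\eps I)$, use Lemma \ref{L:uepreal} and the Hahn--Banach argument to find $b\in A$ with $\re b\leq g$ and $\omega(\re b)\geq-\eps$, exponentiate, use $A$-convexity for the lower bound $\omega(e^{tb*}e^{tb})\geq 1+2\omega(\re b)t$, and normalize, noting that exponentials are invertible and that $\pi(f)=0$ forces $\|\pi(e^{tb})\|\leq\eps^t$ for $[\pi]\in K$. The one place where your justification does not actually deliver what you need is the construction of $f$: your concrete recipe (take $h\in J$ with $\|\sigma(h)\xi-\xi\|$ small and a spectral cutoff of $h^*h$) only gives $\omega(f)$ close to $1$, not equal to $1$, and exactness is not cosmetic here --- it is what places $f$ in the multiplicative domain of $\omega$ and yields $\omega(g)=\log(1+\eps)$ on the nose; with $\omega(f)=1-\eta$ Jensen's inequality goes the wrong way and one would have to run a separate chord estimate with $\eta\ll 1/|\log\eps|$. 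The paper closes this step cleanly: since $[\sigma]\notin K$, $\sigma$ does not vanish on $J$, hence $\sigma|_J$ is irreducible by \cite[Lemma 2.11.13]{dixmier1977}, and the Kadison transitivity theorem (in its version for positive elements) produces $0\leq f\leq I$ in $J$ with $\sigma(f)\xi=\xi$. Your alternative sketch via $\fl_\omega\leq z_J$ points in the right direction but likewise does not by itself yield exactness, so you should simply invoke transitivity.

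One further small point: your final normalization ``$a=\omega(a_\delta^*a_\delta)^{-1/2}e^{\delta b}$ for suitably small $\delta$'' is phrased misleadingly, since $\eps^\delta\to1$ as $\delta\to0$ and the bound $\|\pi(a)\|<\alpha$ would then fail; as your own convexity remark shows, there is no need to take $\delta$ small at all, and the paper simply takes $\delta=1$, giving $\|a\|\leq(1+\eps)/\sqrt{1-2\eps}$ and $\|\pi(a)\|\leq\eps/\sqrt{1-2\eps}$ for $[\pi]\in K$, which suffices once $\eps$ is chosen small relative to $\alpha$.
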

\begin{proof}
Throughout the proof, we denote by $\sigma:B\to B(H)$ the GNS representation of $\omega$, with distinguished unit cyclic vector $\xi\in H$ satisfying
\[
\omega(b)=\langle \sigma(b)\xi,\xi\rangle, \quad b\in B.
\]
By definition of the topology on $\spec(B)$, we see that $\sigma$ does not vanish on the ideal $J=\bigcap_{[\pi]\in K}\ker\pi$. The restriction of $\sigma$ to $J$ is thus still irreducible \cite[Lemma 2.11.13]{dixmier1977}. By the Kadison transitivity theorem, there is $0\leq f\leq I$ in $J$ such that $\sigma(f)\xi=\xi$, that is $\omega(f)=1$. Note in particular that $f$ lies in the multiplicative domain of $\omega$.
Letting $g=\log(f+\eps I)\in B$, we find $\omega(g)=\log(1+\eps)$.

By virtue of Lemma \ref{L:uepreal}, we see that the restriction of $\omega$ to the real subspace $\{\re b:b\in A\}$ admits a unique $\bR$-linear extension to a contractive functional on the real subspace of self-adjoint elements in $B$. The classical Hahn--Banach argument (see for instance \cite[Proposition 6.2]{arveson2011}) then implies that
\[
\omega(g)=\sup\{\omega(\re b):b\in A, \re b\leq g\}.
\]
Let $\eps>0$. Then, there is $b\in A$ with $\re b\leq g$ such that 
\[
\omega(\re b)\geq \omega(g)-\eps=\log(1+\eps)-\eps\geq -\eps.
\]
Put $a_0=e^b$. By Lemma \ref{L:expbound}(i), for any $*$-representation $\rho$ of $B$ we see that 
\[
\|\rho(a_0)\|\leq \|e^{ \re \rho(b)}\|\leq \|e^{\rho(g)}\|\leq  \|\rho(f)+\eps I\|.
\]
This implies that $\|a_0\|\leq 1+\eps$ and that $\|\pi(a_0)\|\leq \eps$ if $[\pi]\in K$, since $f\in J$. 

Next, consider the function $f:\bR\to [0,\infty)$ defined as
\[
f(t)=\omega(e^{tb*}e^{tb}), \quad t\in \bR.
\]
Using that $\omega$ is $A$-convex along with Lemma \ref{L:expderiv}, we see that 
\[
f'(t)\geq f'(0)=2\omega( \re b), \quad t\geq 0.
\]
The mean value theorem then implies
\[
\omega(a_0^*a_0)=f(1) \geq  1+2\omega(\re b)\geq 1-2\eps.
\]
Finally, put $a=\frac{1}{\sqrt{\omega(a^*_0a^*)}}a_0$ so that $\omega(a^*a)=1$. We find
\[
\|a\|\leq \frac{1+\eps}{\sqrt{1-2\eps}}
\]
and
\[
\|\pi(a)\|\leq\frac{ \eps}{\sqrt{1-2\eps}}, \quad [\pi]\in K.
\]
This readily implies the desired conclusion.
\end{proof}

To put this result in proper perspective, we next turn to a discussion of the $X$-convexity condition.
Below, we identify a class of subalgebras where every state has the required convexity property. The basic observation is as follows.

\begin{lemma}\label{L:convhypo}
Let $B$ be a unital commutative $\rC^*$-algebra and let $X\subset B$ be a subset. Let $H$ be a Hilbert space. Let $\rho:B\to B(H)$ be a unital completely positive map with the property that
\[
\rho(a^*a)=\rho(a)^*\rho(a), \quad a\in X.
\]
Then, any state on $B(H)$ is $\rho(X)$-convex.
\end{lemma}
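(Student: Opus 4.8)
The plan is to reduce the $\rho(X)$-convexity statement for an arbitrary state on $B(H)$ to an application of Lemma~\ref{L:expderiv}, exactly as was done in the proof of Theorem~\ref{T:convex}. Fix $a\in X$ and a state $\Phi$ on $B(H)$, and set $c=\rho(a)\in B(H)$. By Lemma~\ref{L:expderiv} applied to the element $c$ in the $\rC^*$-algebra $B(H)$ and the state $\Phi$, the function $t\mapsto \Phi(e^{tc*}e^{tc})$ is convex as soon as
\[
4(\re c)^2+(c^*c-cc^*)\geq 0.
\]
So the first step is to show that $e^{t\rho(a)}=\rho(e^{ta})$ for all $t$, which is immediate: the multiplicativity hypothesis $\rho(a^*a)=\rho(a)^*\rho(a)$ means $a$ lies in the multiplicative domain of $\rho$, hence so does every element of the (not necessarily closed) algebra generated by $a$ together with $I$; since $B$ is commutative, that algebra is commutative, and $\rho$ is a homomorphism on it, so $\rho(e^{ta})=e^{t\rho(a)}$, which is $e^{tc}$. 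Thus $\Phi(e^{t\rho(a)*}e^{t\rho(a)})=\Phi(\rho(e^{ta})^*\rho(e^{ta}))$.

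The key step is then to verify the operator inequality $c^*c-cc^*\geq 0$, i.e.\ that $c=\rho(a)$ is hyponormal, together with the fact that $4(\re c)^2\geq 0$ automatically (it is a square of a self-adjoint element). For hyponormality, note that both $a$ and $a^*$ lie in the multiplicative domain of $\rho$ (the multiplicative domain is a $*$-subalgebra), and since $B$ is commutative, $\rho$ restricted to $\rC^*(a)$ is a $*$-homomorphism into $B(H)$; in particular $\rho(a)^*\rho(a)=\rho(a^*a)=\rho(aa^*)=\rho(a)\rho(a)^*$, so $c$ is in fact \emph{normal}, and $c^*c-cc^*=0\geq 0$. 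Hence $4(\re c)^2+(c^*c-cc^*)=4(\re c)^2\geq 0$, and Lemma~\ref{L:expderiv} yields that $t\mapsto \Phi(e^{tc*}e^{tc})$ is convex on $\bR$, a fortiori on $[0,\infty)$.

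Putting these together: for every $a\in X$ the function $t\mapsto \Phi(e^{t\rho(a)*}e^{t\rho(a)})$ is convex on $[0,\infty)$, which is precisely the statement that the state $\Phi$ on $B(H)$ is $\rho(X)$-convex. I do not anticipate a genuine obstacle here; the only point that requires a little care is the bookkeeping with the multiplicative domain of $\rho$ — one must invoke that it is a $*$-subalgebra (so that $a^*$ and products involving $a,a^*$ all lie in it) and that, because $B$ is commutative, $\rho$ is actually a $*$-homomorphism when restricted there, which is what upgrades hyponormality of $\rho(a)$ all the way to normality and makes the relevant operator expression a perfect square.
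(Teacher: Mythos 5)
There is a genuine gap at the central step. From the single hypothesis $\rho(a^*a)=\rho(a)^*\rho(a)$ you conclude that $a$ lies in the multiplicative domain of $\rho$ and hence that $\rho$ is a $*$-homomorphism on $\rC^*(a)$, so that $\rho(a)$ is normal. That is not what the hypothesis gives: membership in the (two-sided) multiplicative domain requires \emph{both} $\rho(a^*a)=\rho(a)^*\rho(a)$ and $\rho(aa^*)=\rho(a)\rho(a)^*$, and the given condition only yields the one-sided module property $\rho(ba)=\rho(b)\rho(a)$ for all $b\in B$. The commutativity of $B$ gives $\rho(aa^*)=\rho(a^*a)$, but it does not give $\rho(aa^*)=\rho(a)\rho(a)^*$, and in general $\rho(a)$ is \emph{not} normal. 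Concretely, take $B=\rC(\bT)$, $H=H^2$ the Hardy space, and $\rho(f)=P_{H^2}M_f|_{H^2}$ the Toeplitz compression; with $a=z$ one has $\rho(a^*a)=I=\rho(a)^*\rho(a)$, yet $\rho(a)$ is the unilateral shift, which is hyponormal but not normal. (This is exactly the kind of example the lemma is designed for, via Proposition \ref{P:convspherical}: if $\rho(X)$ consisted of normal operators the statement would lose its point.) So the chain $\rho(a)^*\rho(a)=\rho(a^*a)=\rho(aa^*)=\rho(a)\rho(a)^*$ breaks at the last equality, and with it your claim that $c^*c-cc^*=0$.

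The lemma is nevertheless true, and your argument is easily repaired along the lines the paper takes: by the Schwarz inequality for the unital completely positive map $\rho$ one has $\rho(a)\rho(a)^*\leq\rho(aa^*)=\rho(a^*a)=\rho(a)^*\rho(a)$, so $c=\rho(a)$ is hyponormal, and therefore $4(\re c)^2+(c^*c-cc^*)\geq 0$, which is all that Lemma \ref{L:expderiv} needs to give convexity of $t\mapsto\Phi(e^{tc*}e^{tc})$ for any state $\Phi$ on $B(H)$. Note also that your preliminary step identifying $e^{t\rho(a)}$ with $\rho(e^{ta})$ is unnecessary: $\rho(X)$-convexity concerns the elements $\rho(a)\in B(H)$ directly, so Lemma \ref{L:expderiv} is applied to $c=\rho(a)$ in the $\rC^*$-algebra $B(H)$ without ever comparing exponentials across $\rho$.
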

\begin{proof}
For $a\in X$, we see that
\[
\rho(a)^*\rho(a)=\rho(a^*a)=\rho(aa^*)
\]
since $B$ is commutative. Then, the Schwarz inequality implies that
\[
\rho(a)^*\rho(a)\geq \rho(a)\rho(a^*)
\]
whence
\[
4(\re \rho(a))^2+(\rho(a)^*\rho(a)-\rho(a)\rho(a)^*)\geq 0
\]
for $a\in X$. Lemma \ref{L:expderiv} then implies that every state on $B(H)$ is $\rho(X)$-convex. 
\end{proof}

Next, we identify a natural class of algebras arising from such a map $\rho$.

\begin{proposition}\label{P:convspherical}
Let $B$ be a unital $\rC^*$-algebra  and let $b_1,\ldots,b_n\in B$ be commuting elements satisfying $\sum_{k=1}^n b_k^* b_k=I$. Let $A\subset B$ denote the unital norm-closed subalgebra generated by $\{b_1,\ldots,b_n\}$. Then, any state on $B$ is $A$-convex.
\end{proposition}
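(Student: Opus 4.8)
The plan is to reduce the convexity statement to one operator inequality and then to the hyponormality of the elements of $A$. Given a state $\psi$ on $B$ and $a\in A$, Lemma~\ref{L:expderiv} gives
\[
f''(t)=\psi\!\left(e^{ta*}\big(4(\re a)^2+(a^*a-aa^*)\big)e^{ta}\right),\qquad f(t)=\psi(e^{ta*}e^{ta}),
\]
so it suffices to prove that $4(\re a)^2+(a^*a-aa^*)\ge 0$ for every $a\in A$; since $4(\re a)^2\ge 0$, this follows once we know that every $a\in A$ is hyponormal, i.e.\ $a^*a\ge aa^*$. As the hyponormal elements of $B$ form a norm-closed set and $A$ is the closed subalgebra generated by $I,b_1,\dots,b_n$, it is enough to verify hyponormality for $a=p(b_1,\dots,b_n)$ with $p$ an arbitrary polynomial in $n$ commuting indeterminates.

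To handle such $a$, I would pass to a Hilbert space and use the subnormality of spherical isometries. Fix a faithful unital $*$-representation of $B$, so that we may assume $B\subseteq B(H)$. The tuple $(b_1,\dots,b_n)$ commutes and satisfies $\sum_k b_k^*b_k=I$, i.e.\ it is a spherical isometry, so by Athavale's theorem \cite{athavale1990} it is jointly subnormal: there are a Hilbert space $K\supseteq H$ and commuting normal operators $N_1,\dots,N_n\in B(K)$ leaving $H$ invariant with $N_k|_H=b_k$. By the spectral theorem for commuting normal tuples, the unital commutative $\rC^*$-algebra generated by $N_1,\dots,N_n$ consists of normal operators, so $p(N_1,\dots,N_n)$ is normal; since $H$ is invariant under each $N_k$ it is invariant under $p(N_1,\dots,N_n)$, whose restriction to $H$ is $p(b_1,\dots,b_n)$. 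Thus $p(b_1,\dots,b_n)$ is the restriction of a normal operator to an invariant subspace, hence hyponormal (for $\xi\in H$, $\|p(b)^*\xi\|=\|P_H p(N)^*\xi\|\le\|p(N)^*\xi\|=\|p(N)\xi\|=\|p(b)\xi\|$), which completes the reduction.

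Two remarks on the plan. First, Lemma~\ref{L:convhypo} supplies an alternative endgame: the compression $\rho\colon\rC^*(N_1,\dots,N_n)\to B(H)$ is unital completely positive, satisfies $\rho(x^*x)=\rho(x)^*\rho(x)$ on the polynomial subalgebra, and sends $p(N)$ to $p(b)$, so one could invoke that lemma directly; the hyponormality route above is simply shorter. Second, the whole argument rests on a single external input, namely that a spherical isometry is subnormal — the surrounding facts (norm-closedness of hyponormality, the spectral theorem for commuting normal tuples, and the fact that the restriction of a normal operator to an invariant subspace is hyponormal) are routine. Consequently the step requiring care is locating and correctly quoting Athavale's subnormality theorem; once it is in hand, the proof is short.
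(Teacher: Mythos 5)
Your argument is correct and follows essentially the same route as the paper: both rest on Athavale's subnormality of spherical isometries together with the second-derivative formula of Lemma~\ref{L:expderiv}. The only difference is cosmetic: instead of invoking Lemma~\ref{L:convhypo} through the compression map, you verify directly that every element of $A$ is hyponormal (as a norm limit of restrictions of normal operators to an invariant subspace), which is exactly the positivity hidden in that lemma's Schwarz-inequality step, and you make explicit the passage from polynomials to the norm closure, a point the paper leaves implicit.
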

\begin{proof}
We may assume that there is a Hilbert space $F$ such that $B\subset B(F)$. 
It follows from \cite[Proposition 2]{athavale1990} that there is a Hilbert space $H$ containing $F$ and commuting normal operators $u_1,\ldots u_n \in B(H)$ with $\sum_{k=1}^n u_k^* u_k=I$ such that $F$ is invariant for each $u_k$ and
\[
b_k=u_k|_F, \quad 1\leq k\leq n.
\]
Define a unital completely positive map $\rho:\rC^*(u_1,\ldots,u_n)\to B(F)$ as
\[
\rho(t)=P_F t|_F, \quad t\in \rC^*(u_1,\ldots,u_n).
\]
Then, $\rho(u_k)=b_k$ for each $1\leq k\leq n$, and the fact that $F$ is invariant for $u_1,\ldots,u_n$ readily implies that
\[
\rho(c^*c)=\rho(c)^*\rho(c)
\]
for every $c$ in the unital algebra generated by $u_1,\ldots,u_n$.  Moreover, $\rC^*(u_1,\ldots,u_n)$ is commutative by Fuglede's theorem. An application of Lemma \ref{L:convhypo} completes the proof.
\end{proof}

We close the paper with a discussion of Theorem \ref{T:convex}. The result appears very promising at first glance, as its conclusion is a rather close analogue of Bishop's criterion from the introduction. Recall now that it is shown in \cite[Theorem 6.5]{BdL1959} that Bishop's criterion  is equivalent to a genuine peaking condition for subalgebras of $\rC(X)$, when the topology on $X$ is metrizable (or at least not too pathological).  It thus stands to reason that one could adapt the argument of \cite{BdL1959} and use Theorem \ref{T:convex} to prove, for instance, that the GNS representation of $\omega$ is an $A$-peak representation in the sense of \cite{arveson2011}. 

There are some difficulties with  this approach, stemming from the fact that the topology on the spectrum of a general $\rC^*$-algebra need not be Hausdorff. In fact, the class of unital $\rC^*$-algebras with Hausdorff spectra is rather restrictive: such a $\rC^*$-algebra must  be liminal, in the sense that it only admits finite-dimensional $*$-representations \cite[Theorem 1]{BD1975}.

\bibliography{purestatesnsa}
\bibliographystyle{plain}

\end{document}